\documentclass[10pt]{article}
\usepackage{charter}\usepackage[T1]{fontenc}\usepackage{textcomp}\usepackage{pifont}
\usepackage[scaled=.92]{helvet}
\usepackage{eulervm,euscript}
\usepackage{caption2}
\DeclareFontFamily{T1}{pzc}{}
\DeclareFontShape{T1}{pzc}{m}{it}{<-> s * [1.200] pzcmi}{}
\DeclareMathAlphabet{\nicemathcal}{OT1}{pzc}{m}{it}
\DeclareMathAlphabet{\oldmathcal}{OMS}{bch}{m}{n}
\usepackage{mathrsfs}
\linespread{1}
\usepackage{amsmath,amssymb,geometry,theorem,graphicx,wasysym,mathbbol}
\usepackage{pstricks,pst-node,pst-text,pst-tree,pstricks-add}

\newcommand{\postsectionheadinggap}{3ex}\newcommand{\presectionheadinggap}{4ex}
\makeatletter
\def\section{\@startsection{section}{1}{0pt}{-\presectionheadinggap plus -1ex minus 
-.2ex}{\postsectionheadinggap plus .2ex minus .3ex}{\normalfont\large\bf}}
\renewcommand\subsection{\@startsection{subsection}{2}{\z@}%
                                     {-3.25ex\@plus -1ex \@minus -.2ex}%
                                     {1.5ex \@plus .2ex}%
                                     {\normalfont\normalsize\bfseries}}
\makeatother

\usepackage{setspace}
\let\myCaption\caption
\renewcommand\caption[1]{%
\singlespacing
\myCaption{#1}}

\setlength{\abovecaptionskip}{0pt}

\renewenvironment{abstract}
{\begin{quotation}\small
}{\end{quotation}}

\newcommand{\defn}[1]{{\textit{\textbf{#1}}}}
\newcommand{\myitem}[1]{\item[\textnormal{(#1)}]}

\newcommand{\ftnotemarknospace}[1]{\makebox[0ex]{$\mkern12mu{}^{\text{\footnotesize\textnormal #1}}$}}
\newcommand{\ftnotetextmark}[1]{{${}^{\text{\footnotesize\textnormal #1}}\mkern5mu$}}
\theoremheaderfont{\scshape}
\theorembodyfont{\normalfont\slshape}
\theoremstyle{plain}
\newtheorem{definition}{Definition}

\newtheorem{proposition}{Proposition}

\newtheorem{theorem}{Theorem}
\newtheorem{conjecture}{Conjecture}
\theorembodyfont{\normalfont}

\newenvironment{proof}{\begin{trivlist}\item{}\normalfont\textit{Proof.}}{\hfill$\square$\end{trivlist}}

\newcommand{\nth}{^{\text{th}}}

\newcommand{\ie}{\emph{i.e.}}
\newcommand{\eg}{\emph{e.g.}}
\newcommand{\cf}{\emph{cf.}}

\newcommand{\with}{\mkern2mu\&\mkern2mu}
\newcommand{\plus}{\raisebox{.6pt}{\makebox[1.8ex]{$\oplus$}}}

\newcommand{\tensor}{\raisebox{.6pt}{\makebox[1.8ex]{$\otimes$}}}
\newlength{\parrdp}\newlength{\parrht}
\settodepth{\parrdp}{$\&$}\settoheight{\parrht}{$\&$}
\newcommand{\parr}{\raisebox{-\parrdp}{\raisebox{\parrht}{\rotatebox{180}{$\&$}}}}
\newsavebox{\parrbox}\savebox{\parrbox}{\parr}
\newlength{\smallparrdp}\newlength{\smallparrht}
\settodepth{\smallparrdp}{{\small$\&$}}\settoheight{\smallparrht}{{\small$\&$}}

\newlength{\footnoteparrdp}\newlength{\footnoteparrht}
\settodepth{\footnoteparrdp}{{\footnotesize$\&$}}\settoheight{\footnoteparrht}{{\footnotesize$\&$}}
\newcommand{\footnoteparr}
{\mkern1mu\raisebox{-\footnoteparrdp}{\raisebox{\footnoteparrht}{\rotatebox{180}{\footnotesize$\&$}}}\mkern1mu}
\newlength{\scriptparrdp}\newlength{\scriptparrht}
\settodepth{\scriptparrdp}{{\scriptsize$\&$}}\settoheight{\scriptparrht}{{\scriptsize$\&$}}

\newlength{\tinyparrdp}\newlength{\tinyparrht}
\settodepth{\tinyparrdp}{{\tiny$\&$}}\settoheight{\tinyparrht}{{\tiny$\&$}}

\newdimen\arrayruleHwidth
\setlength{\arrayruleHwidth}{1.3pt}
\makeatletter
\def\Hline{\noalign{\ifnum0=`}\fi\hrule \@height \arrayruleHwidth
  \futurelet \@tempa\@xhline}
\makeatother

\newdimen\proofrulebreadth \proofrulebreadth=.05em
\newdimen\proofdotseparation \proofdotseparation=1.25ex
\newdimen\proofrulebaseline \proofrulebaseline=2ex
\newcount\proofdotnumber \proofdotnumber=3
\let\then\relax
\def\hfi{\hskip0pt plus.0001fil}
\mathchardef\squigto="3A3B
\newif\ifinsideprooftree\insideprooftreefalse
\newif\ifonleftofproofrule\onleftofproofrulefalse
\newif\ifproofdots\proofdotsfalse
\newif\ifdoubleproof\doubleprooffalse
\let\wereinproofbit\relax
\newdimen\shortenproofleft
\newdimen\shortenproofright
\newdimen\proofbelowshift
\newbox\proofabove
\newbox\proofbelow
\newbox\proofrulename
\def\shiftproofbelow{\let\next\relax\afterassignment\setshiftproofbelow\dimen0 }
\def\shiftproofbelowneg{\def\next{\multiply\dimen0 by-1 }%
\afterassignment\setshiftproofbelow\dimen0 }
\def\setshiftproofbelow{\next\proofbelowshift=\dimen0 }
\def\setproofrulebreadth{\proofrulebreadth}

\def\prooftree{
\ifnum  \lastpenalty=1
\then   \unpenalty
\else   \onleftofproofrulefalse
\fi
\ifonleftofproofrule
\else   \ifinsideprooftree
        \then   \hskip.5em plus1fil
        \fi
\fi
\bgroup
\setbox\proofbelow=\hbox{}\setbox\proofrulename=\hbox{}%
\let\justifies\proofover\let\leadsto\proofoverdots\let\Justifies\proofoverdbl
\let\using\proofusing\let\[\prooftree
\ifinsideprooftree\let\]\endprooftree\fi
\proofdotsfalse\doubleprooffalse
\let\thickness\setproofrulebreadth
\let\shiftright\shiftproofbelow \let\shift\shiftproofbelow
\let\shiftleft\shiftproofbelowneg
\let\ifwasinsideprooftree\ifinsideprooftree
\insideprooftreetrue
\setbox\proofabove=\hbox\bgroup$\displaystyle 
\let\wereinproofbit\prooftree
\shortenproofleft=0pt \shortenproofright=0pt \proofbelowshift=0pt
\onleftofproofruletrue\penalty1
}

\def\eproofbit{
\ifx    \wereinproofbit\prooftree
\then   \ifcase \lastpenalty
        \then   \shortenproofright=0pt  
        \or     \unpenalty\hfil         
        \or     \unpenalty\unskip       
        \else   \shortenproofright=0pt  
        \fi
\fi
\global\dimen0=\shortenproofleft
\global\dimen1=\shortenproofright
\global\dimen2=\proofrulebreadth
\global\dimen3=\proofbelowshift
\global\dimen4=\proofdotseparation
\global\count255=\proofdotnumber
$\egroup  
\shortenproofleft=\dimen0
\shortenproofright=\dimen1
\proofrulebreadth=\dimen2
\proofbelowshift=\dimen3
\proofdotseparation=\dimen4
\proofdotnumber=\count255
}

\def\proofover{
\eproofbit 
\setbox\proofbelow=\hbox\bgroup 
\let\wereinproofbit\proofover
$\displaystyle
}%
\def\proofoverdbl{
\eproofbit 
\doubleprooftrue
\setbox\proofbelow=\hbox\bgroup 
\let\wereinproofbit\proofoverdbl
$\displaystyle
}%
\def\proofoverdots{
\eproofbit 
\proofdotstrue
\setbox\proofbelow=\hbox\bgroup 
\let\wereinproofbit\proofoverdots
$\displaystyle
}%
\def\proofusing{
\eproofbit 
\setbox\proofrulename=\hbox\bgroup 
\let\wereinproofbit\proofusing
\kern0.3em$
}

\def\endprooftree{
\eproofbit 
  \dimen5 =0pt
\dimen0=\wd\proofabove \advance\dimen0-\shortenproofleft
\advance\dimen0-\shortenproofright
\dimen1=.5\dimen0 \advance\dimen1-.5\wd\proofbelow
\dimen4=\dimen1
\advance\dimen1\proofbelowshift \advance\dimen4-\proofbelowshift
\ifdim  \dimen1<0pt
\then   \advance\shortenproofleft\dimen1
        \advance\dimen0-\dimen1
        \dimen1=0pt
        \ifdim  \shortenproofleft<0pt
        \then   \setbox\proofabove=\hbox{%
                        \kern-\shortenproofleft\unhbox\proofabove}%
                \shortenproofleft=0pt
        \fi
\fi
\ifdim  \dimen4<0pt
\then   \advance\shortenproofright\dimen4
        \advance\dimen0-\dimen4
        \dimen4=0pt
\fi
\ifdim  \shortenproofright<\wd\proofrulename
\then   \shortenproofright=\wd\proofrulename
\fi
\dimen2=\shortenproofleft \advance\dimen2 by\dimen1
\dimen3=\shortenproofright\advance\dimen3 by\dimen4
\ifproofdots
\then
        \dimen6=\shortenproofleft \advance\dimen6 .5\dimen0
        \setbox1=\vbox to\proofdotseparation{\vss\hbox{$\cdot$}\vss}%
        \setbox0=\hbox{%
                \advance\dimen6-.5\wd1
                \kern\dimen6
                $\vcenter to\proofdotnumber\proofdotseparation
                        {\leaders\box1\vfill}$%
                \unhbox\proofrulename}%
\else   \dimen6=\fontdimen22\the\textfont2 
        \dimen7=\dimen6
        \advance\dimen6by.5\proofrulebreadth
        \advance\dimen7by-.5\proofrulebreadth
        \setbox0=\hbox{%
                \kern\shortenproofleft
                \ifdoubleproof
                \then   \hbox to\dimen0{%
                        $\mathsurround0pt\mathord=\mkern-6mu%
                        \cleaders\hbox{$\mkern-2mu=\mkern-2mu$}\hfill
                        \mkern-6mu\mathord=$}%
                \else   \vrule height\dimen6 depth-\dimen7 width\dimen0
                \fi
                \unhbox\proofrulename}%
        \ht0=\dimen6 \dp0=-\dimen7
\fi
\let\doll\relax
\ifwasinsideprooftree
\then   \let\VBOX\vbox
\else   \ifmmode\else$\let\doll=$\fi
        \let\VBOX\vcenter
\fi
\VBOX   {\baselineskip\proofrulebaseline \lineskip.2ex
        \expandafter\lineskiplimit\ifproofdots0ex\else-0.6ex\fi
        \hbox   spread\dimen5   {\hfi\unhbox\proofabove\hfi}%
        \hbox{\box0}%
        \hbox   {\kern\dimen2 \box\proofbelow}}\doll%
\global\dimen2=\dimen2
\global\dimen3=\dimen3
\egroup 
\ifonleftofproofrule
\then   \shortenproofleft=\dimen2
\fi
\shortenproofright=\dimen3
\onleftofproofrulefalse
\ifinsideprooftree
\then   \hskip.5em plus 1fil \penalty2
\fi
}

\newcommand{\edgesymb}{\raisebox{3pt}{\!\begin{psmatrix}[colsep=2.6ex]\rnode{l}{\rule{0pt}{1.2ex}}&\rnode{r}{\rule{0pt}{1.2ex}}%
\ncline[arrows=->,nodesep=2pt,arrowsize=2pt 1,arrowinset=.3,arrowlength=.5,linewidth=.3pt]{l}{r}\end{psmatrix}\!}}
\newcommand{\edge}{\mathrel{\edgesymb}}

\newcommand{\conflictstyle}{\psset{nodesep=1pt,linestyle=dotted,dotsep=1pt,linewidth=1pt}}
\newpsobject{conflictline}{ncline}{nodesep=1pt,linestyle=dotted,dotsep=1pt,linewidth=1pt}


\newcommand{\dual}[1]{\overline{#1}}

\newcommand{\perpp}{{}^{\perp}}

\newcommand{\girstcoh}{\mathrel{\raisebox{.3ex}{$\frown$}}}


\newcommand{\conflict}{\mkern1mu\#\mkern1mu}

\newcommand{\cohof}[1]{{#1}^{\#}}

\newcommand{\map}[2]{\Rnode{a}{}\hspace*{#2}\Rnode{b}{}\ncline[nodesep=3pt,offset=-.5pt]{->}{a}{b}\naput[labelsep=1pt]{#1}}
\newcommand{\umap}[2]{\Rnode{a}{}\hspace*{#2}\Rnode{b}{}\ncline[nodesep=3pt,offset=-.5pt]{->}{a}{b}\naput[labelsep=1pt]{#1}}

\newcommand{\skiprobin}[1]{}

\newcommand{\tagwith}[1]{\mkern2mu\with\mkern-6mu^{\raisebox{2pt}{\scriptsize{$#1$}}}\mkern-1mu}

\newcommand{\atomOne}{P}
\newcommand{\atomTwo}{Q}

\newcommand{\smallcircl}{\raisebox{-.05ex}{$\circ$}}
\newcommand{\smallsquar}{\raisebox{.4pt}[4.45pt][0pt]{\setlength\fboxrule{.38pt}\setlength\fboxsep{0pt}\framebox{\rule{1.52mm}{0mm}\rule{0mm}{1.5mm}}}}
\newcommand{\smallbulle}{\raisebox{-.09ex}{\small$\bullet$}}

\newcommand{\smallblacksquar}{\raisebox{.12ex}{\rule{1.5mm}{1.5mm}}}
\newcommand{\cross}{{\large\ding{55}}}
\newcommand{\chmark}{{\hspace*{-.3ex}{\large\bf\checkmark}\hspace*{-.3ex}}}
\newcommand{\openq}{{\large\bf?}}
\newcommand{\probno}{\cross\openq}

\newcommand{\lolly}{\multimap}
\newcommand{\leavesof}[1]{|#1|}

\newcommand{\adjacent}{\girstcoh}
\newcommand{\scriptadjacent}{\mbox{\tiny$\frown$}}
\newcommand{\cohmod}[1]{\textnormal{ [\textsf{mod} }#1\textnormal{]}}

\newcommand{\csym}{\textsf{C}}
\newcommand{\nodebox}[1]{\begin{picture}(7,7)\put(3.5,3.5){\makebox(0,0){#1}}\end{picture}}

\newcommand{\wirea}[2]{\naput[npos=#1,labelsep=2pt]{\makebox{\scriptsize$#2$}}}
\newcommand{\wireb}[2]{\nbput[npos=#1,labelsep=2pt]{\makebox{\scriptsize$#2$}}}

\newcommand{\contractionnodeanon}{\pscirclebox[framesep=0pt,linewidth=.1pt]{\nodebox{\csym}}}

\newcommand{\rulelabel}[1]{\mathsf{#1}}
\newcommand{\tensorlabel}{\otimes}
\newcommand{\parlabel}{\parr}

\newcommand{\axlabel}{\rulelabel{ax}}

\newcommand{\permlabel}{\rulelabel{\mkern1mu perm}}
\newcommand{\pluslabel}[1]{\plus_{#1}}

\newcommand{\withlabel}{\with}

\newcommand{\withOne}{\with}%
\newcommand{\withTwo}{\with\mkern-4mu'}%
\newcommand{\leafOne}{P}%
\newcommand{\leafTwo}{\leafOne}%
\newcommand{\leafThree}{\dual\leafOne}%
\newcommand{\leafFour}{\dual\leafOne}%
\newcommand{\leadsdownto}{\rput{-90}(0,0){$\leadsto$}}
\newcommand{\conflictgraph}[1]{{#1}^{\conflict}}
\newcommand{\adjacencygraph}[1]{{#1}^{^{\mkern-3mu\scriptadjacent}}}
\newcommand{\error}{\mathsf{E}}

\title{\LARGE
\textbf{Abstract p-time proof nets for MALL:\\ Conflict nets}
\author{\\[-2ex]
\large Dominic J.\ D.\ Hughes\thanks{Visiting Scholar, Computer Science Department, Stanford University, CA 94305.}
\\[1.5ex]
\normalsize Stanford University\\
\small January 11, 2007}\date{}
}

\begin{document}\thispagestyle{empty}
\maketitle

\begin{abstract}\vspace*{-2ex}\small
This paper presents proof nets for multiplicative-additive linear
logic (MALL), called \emph{conflict nets}.  They are \emph{efficient},
since both correctness and translation from a proof are p-time
(polynomial time), and \emph{abstract}, since they are invariant under
transposing adjacent $\with$-rules.

A conflict net on a sequent is concise: axiom links with a conflict
relation.
Conflict nets are a variant of (and were inspired by)
\emph{combinatorial proofs} introduced recently for classical logic:
each can be viewed as a maximal map (homomorphism) of contractible
coherence spaces ($P_4$-free graphs, or cographs), from axioms to
sequent.

The paper presents new results for other proof nets: (1) correctness
and cut elimination for slice nets (Hughes / van Glabbeek 2003) are
p-time, and (2) the cut elimination proposed for monomial nets (Girard
1996) does not work.
The subtleties which break monomial net cut elimination also apply to
conflict nets: 
as with monomial nets, existence of a confluent cut elimination
remains an open question.

\end{abstract}

\section{Introduction}

Jean-Yves Girard's seminal paper \cite{Gir87} on linear logic
introduced an elegant abstract representation of a proof called a
\emph{proof net}.
These original proof nets used \emph{boxes} \cite[p.\,45]{Gir87} to
deal with the superposition associated with $\with$-connectives.
Boxes mimic the sequent calculus $\with$-rule almost directly, so that
the following two proofs, which differ only in the order of adjacent
$\with$-rules, have distinct box nets:
\begin{center}%
\begin{prooftree}\thickness=.08em\label{intro-proofs}
\[
  \[
    \justifies
    \leafOne,\leafThree
  \]
  \hspace{2ex}
  \[
    \justifies
    \leafTwo,\leafThree
  \]
  \justifies
  \leafOne\withOne \leafTwo, \leafThree
  \using \withOne
\]
\hspace{4ex}
\[
  \[
    \justifies
    \leafOne,\leafFour
  \]
  \hspace{2ex}
  \[
    \justifies
    \leafTwo,\leafFour
  \]
  \justifies
  \leafOne\withOne \leafTwo, \leafFour
  \using \withOne
\]
\justifies
\leafOne\withOne \leafTwo, \leafThree\withTwo \leafFour
\using \withTwo
\end{prooftree}
\hspace{11ex}
\begin{prooftree}\thickness=.08em
\[
  \[
    \justifies
    \leafOne,\leafThree
  \]
  \hspace{2ex}
  \[
    \justifies
    \leafOne,\leafFour
  \]
  \justifies
  \leafOne, \leafThree\withTwo \leafFour
  \using \withTwo
\]
\hspace{4ex}
\[
  \[
    \justifies
    \leafTwo,\leafThree
  \]
  \hspace{2ex}
  \[
    \justifies
    \leafTwo,\leafFour
  \]
  \justifies
  \leafTwo, \leafThree\withTwo \leafFour
  \using \withTwo
\]
\justifies
\leafOne\withOne \leafTwo, \leafThree\withTwo \leafFour
\using \withOne
\end{prooftree}
\end{center}
(The marked connective $\with\mkern-4mu'$ is for distinction, we omit
sequent turnstiles $\vdash$, and $\dual P$ is the dual of $P$.)

The follow-up paper \cite{Gir96} tried a different approach to
superposition.  Every $\with$ is given an \emph{eigenvariable}, and
every node in the proof net has a list of possibly-negated
eigenvariables, its \emph{monomial}.
Monomial nets suffer
two main defects relative to box nets:
\begin{itemize}
\item
There is no canonical surjection from cut-free proofs to monomial
nets.\footnote{\label{note-monomial-box}There is a canonical
\emph{non}-surjective function: identify no formulas during
translation \cite[p.\,7]{Gir96}.  The image of this function is
precisely the box proof nets, disguised in monomial form.  So as a
semantics of cut-free proofs, this is exactly the box net semantics.
Since every box proof net is a monomial proof net, there are actually
\emph{more} monomial proof nets than box proof nets.}  One can no
longer ask ``Which proofs are identified upon translation to a proof
net?'': monomial nets fail to provide a semantics for cut-free
proofs.\footnote{See \cite{HG03,HG05} for a detailed explanation, with
examples.}
\item
Unfortunately the cut elimination proposed for monomial nets
\cite[p.\,24]{Gir87} does not work: Section~\ref{monomial-cut-elim}
gives a counterexample.  Existence of a confluent cut elimination
remains an open question.\footnote{One always has a trivial
non-confluent cut elimination via sequentialization, which is
uninteresting.}
\end{itemize}

The \emph{slice nets} of \cite{HG03,HG05} solve these problems by
taking a proof net to be a set of axiom linkings, or
\emph{slices}.\footnote{This underlying data structure is mentioned in
appendix A.1.6 of \cite{Gir96}.  The essential contribution of
\cite{HG03,HG05} was to provide the elusive geometric correctness criterion and exhibit a simple confluent cut elimination.}
(Equivalently, a slice net can be represented as a set of
boolean-weighted axiom links.)  There is a canonical surjection from
proofs.  For example, the two proofs above map to the following slice
net, comprising four axiom linkings, each linking containing just one
axiom link:\footnote{Note that this is not a single linking with four
axiom links; it is four linkings each with a single axiom link.  In
this particular case, there is also a canonical monomial net, but that
is not true in general.}
\begin{center}%
\vspace*{3ex}%
\newcommand{\gap}{\hspace{5ex}}%
\newcommand{\sequent}{\begin{math}%
\psset{unit=.9cm}%
\Rnode{a1}{\atomOne}
\with
\Rnode{a2}{\atomOne}
\gap
\Rnode{a1'}{\dual \atomOne}
\with\mkern-4mu'
\Rnode{a2'}{\dual \atomOne}
\end{math}}%
\sequent%
\ncbar[angle=90,nodesep=2pt,arm=.5cm]{a1}{a2'}
\ncbar[angle=90,nodesep=2pt,arm=.25cm]{a2}{a1'}
\ncbar[angle=-90,nodesep=2pt,arm=.25cm]{a1}{a1'}
\ncbar[angle=-90,nodesep=2pt,arm=.5cm]{a2}{a2'}
\vspace*{3ex}
\end{center}
Slice nets were shown to have a simple confluent cut elimination, and
a hyper-elimination which occurs independently slice-by-slice (by
GoI-style path composition), yielding a category
\cite{HG03,HG05}.
The present paper (Section~\ref{sec-slice-ptime}) proves that
correctness of slice nets is p-time.

But all is not rosy with slice nets: there can be an exponential
blowup in size when translating a proof.\footnote{Consider the unique
cut-free proof of $\tensor^n (1\with 1)$, where $\tensor^n$ denotes
iterated tensor $\tensor$ with $n$ arguments associated to the left
(\eg\ $\tensor^3 A\,=\,(A\tensor A)\tensor A\:$), in which
$\tensor$-rules are below $\with$-rules.
Since there are $n$ $\with$-rules, translating this proof $\Pi_n$ to
a slice net $\theta_n$ blows up exponentially: $\theta_n$ has $2^n$
slices (axiom linkings).  (For an example without the tensor unit $1$,
read each $1$ as $a\footnoteparr \dual a$.)  The exponential blowup
when mapping to a set of slices is mentioned in Appendix~A.1.6 of
\cite{Gir96}.}\,\footnote{\label{sleight}As remarked earlier, 
a set of slices can just as well be represented as a set of weighted
axiom links (arbitrary non-monomial boolean weights,
\eg\ $p\cup q$ for eigenvariables $p$ and $q$).
This trivial change of notation does not eliminate the exponential
blowup: with $n$ $\with$'s in the sequent, a boolean weight is a
subset of the $2^n$ hypercube.}
This is a flaw if we take seriously the
notion that a semantics is a \emph{structure preserving map}, or some
kind of \emph{homomorphism} from proofs: we are failing to respect
computational complexity.
A key insight of propositional proof complexity \cite{CR74} 
is that complexity is important in
decidable logics such as MALL.\footnote{In first-order logic,
which is undecidable, the value of a proof as a certificate of
theoremhood is absolutely clear.  But in a decidable, propositional
setting, what is the \emph{point} of being handed a proof?  To
determine theoremhood, we only need the formula.
The idea in propositional proof complexity is to reinstate and
quantify the value of a proof certificate: if the correctness of a
certificate can be checked in polynomial time in its size, and the
certificate is not `too big' relative to the formula, checking the
certificate will be faster than than deciding the theoremhood of the
formula.  See \cite{Urq95} for an accessible introduction to
propositional proof complexity.}%
\clearpage\noindent This paper presents a new notion of proof net, called a \emph{conflict
net}, such that:
\begin{itemize}
\myitem{1}
Checking correctness is p-time in the size of the proof
net.
\myitem{2} 
Translation from a proof is p-time (improving on slice nets \cite{HG03,HG05}).
\myitem{3}
Translation is invariant under transposing adjacent $\with$-rules, and
raising a $\parr$- or $\plus$-rule over a $\with$-rule (improving on
box nets \cite{Gir87} and monomial nets \cite{Gir96}\footnote{With
respect to the canonical (non-surjective) proof translation
\cite[p.\,7]{Gir96}.  See footnote~\ref{note-monomial-box}.}).
\myitem{4}
Extracting a sequentialization is p-time.
\myitem{5}
A conflict net on a sequent is concise: axiom links with a conflict
relation.
\myitem{6} 
Proof translation is simple: axioms become axiom links, and two
axiom links conflict iff they are from opposite branches above a
$\with$-rule.
\end{itemize}
\begin{figure}
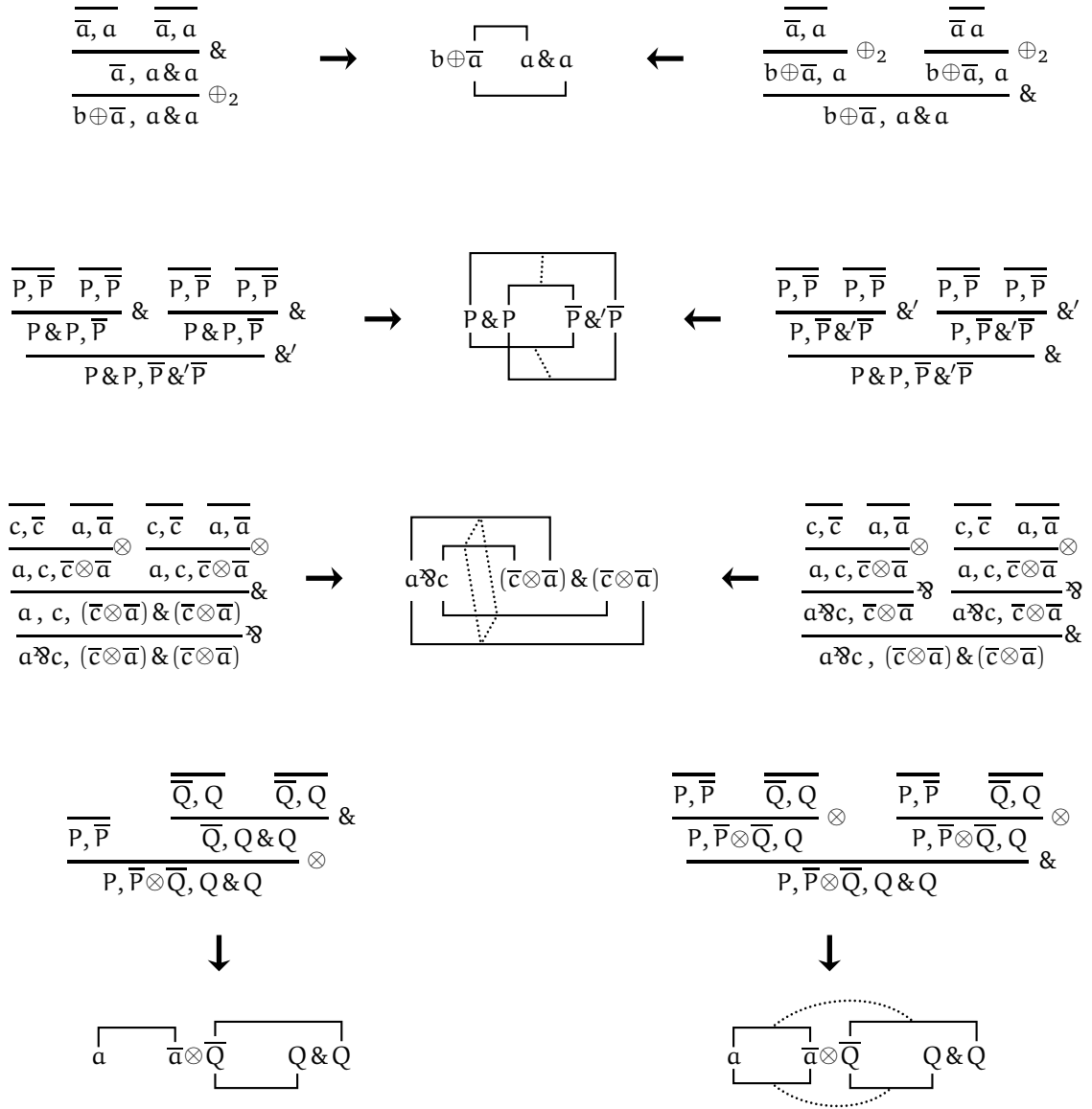
\vspace*{-2.2ex}
\begin{center}
\begin{prooftree}\thickness=.08em
\[
  \[ \justifies \dual a,a \] \hspace*{1ex} \[ \justifies \dual a,a \]
  \justifies \hspace*{3ex}\dual a\,,\,a\with a \using\with
\]
\justifies
b\plus \dual a\,,\,a\with a
\using\plus_2
\end{prooftree}
\begin{math}
\hspace*{6ex}\Rnode{m}{\strut}\hspace*{3ex}\Rnode{n}{\strut}\ncline[arrows=->,linewidth=2pt,arrowsize=9pt,arrowinset=.3,arrowlength=.5]{m}{n}\hspace*{6ex}
b\plus \Rnode{aa}{\strut\dual a}\hspace*{3ex}\Rnode{a}{\strut a}\with \rnode{a'}{\strut a}
\ncbar[angle=90,nodesep=2pt,arm=.2cm]{aa}{a}\ncput{\pnode{x}}
\ncbar[angle=-90,nodesep=2pt,arm=.2cm]{aa}{a'}\ncput{\pnode{y}}
\hspace*{6ex}\Rnode{m}{\strut}\hspace*{3ex}\Rnode{n}{\strut}\ncline[arrows=->,linewidth=2pt,arrowsize=9pt,arrowinset=.3,arrowlength=.5]{n}{m}\hspace*{6ex}
\end{math}
\begin{prooftree}\thickness=.08em
\[
  \[
    \justifies
    \dual a,a
  \]
  \justifies
  b\plus \dual a,\,a
  \using\plus_2
\]
\hspace*{1ex}
\[
  \[
    \justifies
    \dual a\,a
  \]
  \justifies
  b\plus \dual a,\,a
  \using\plus_2
\]
\justifies
b\plus \dual a\,,\,a\with a
\using\with
\end{prooftree}\hspace*{-3ex}\vspace*{8ex}
\end{center}

\begin{center}\hspace*{-8ex}%
\newcommand{\axgap}{\hspace{0ex}}
\begin{prooftree}\thickness=.08em
\[
  \[
    \justifies
    \leafOne,\leafThree
  \]
  \axgap
  \[
    \justifies
    \leafTwo,\leafThree
  \]
  \justifies
  \leafOne\withOne \leafTwo, \leafThree
  \using \withOne\!\!
\]
\hspace{0ex}
\[
  \[
    \justifies
    \leafOne,\leafFour
  \]
  \axgap
  \[
    \justifies
    \leafTwo,\leafFour
  \]
  \justifies
  \leafOne\withOne \leafTwo, \leafFour
  \using \withOne
\]
\justifies
\leafOne\withOne \leafTwo, \leafThree\withTwo \leafFour
\using \withTwo
\end{prooftree}
\begin{math}%
\hspace*{4ex}\Rnode{m}{\strut}\hspace*{3ex}\Rnode{n}{\strut}\ncline[arrows=->,linewidth=2pt,arrowsize=9pt,arrowinset=.3,arrowlength=.5]{m}{n}\hspace*{5ex}
{\Rnode{a1}{\atomOne}
\withOne
\Rnode{a2}{\atomOne}
\hspace{4ex}
\Rnode{a1'}{\dual \atomOne}
\withTwo
\Rnode{a2'}{\dual \atomOne}
\ncbar[angle=90,nodesep=2pt,arm=.65cm]{a1}{a2'}
\ncput[npos=1.5]{\pnode{12}}
\ncbar[angle=90,nodesep=2pt,arm=.2cm]{a2}{a1'}
\ncput[npos=1.5]{\pnode{21}}
\ncbar[angle=-90,nodesep=2pt,arm=.2cm]{a1}{a1'}
\ncput[npos=1.62]{\pnode{11}}
\ncbar[angle=-90,nodesep=2pt,arm=.65cm]{a2}{a2'}
\ncput[npos=1.4]{\pnode{22}}
\conflictstyle
\ncline{12}{21}
\ncline{11}{22}}
\hspace*{5ex}\Rnode{m}{\strut}\hspace*{3ex}\Rnode{n}{\strut}\ncline[arrows=->,linewidth=2pt,arrowsize=9pt,arrowinset=.3,arrowlength=.5]{n}{m}\hspace*{4ex}
\end{math}
\begin{prooftree}\thickness=.08em
\[
  \[
    \justifies
    \leafOne,\leafThree
  \]
  \axgap
  \[
    \justifies
    \leafOne,\leafFour
  \]
  \justifies
  \leafOne, \leafThree\withTwo \leafFour
  \using \withTwo\!\!
\]
\hspace{0ex}
\[
  \[
    \justifies
    \leafTwo,\leafThree
  \]
  \axgap
  \[
    \justifies
    \leafTwo,\leafFour
  \]
  \justifies
  \leafTwo, \leafThree\withTwo \leafFour
  \using \withTwo
\]
\justifies
\leafOne\withOne \leafTwo, \leafThree\withTwo \leafFour
\using \withOne
\end{prooftree}\hspace*{-8ex}
\end{center}

\begin{center}\vspace*{6ex}\newcommand{\branch}{\[
    \[
      \justifies
      c,\dual c
    \]
    \hspace*{0ex}
    \[
      \justifies
      a,\dual a
    \]
    \justifies
    a,c,\dual c\tensor\dual a
    \using\!\!\tensor\!\!\!
  \]}%
\hspace*{-5ex}\begin{prooftree}\thickness=.08em\proofdotseparation=4pt
\[
  \branch
  \hspace*{0ex}
  \branch
  \justifies
  a\,,\, c,\,(\dual c\tensor\dual a)\with (\dual c\tensor\dual a)
  \using\!\!\with\!\!
\]
\justifies a\parr c,\,(\dual c\tensor\dual a)\with (\dual c\tensor\dual a)
\using\!\parr\!\!
\end{prooftree}
\begin{math}%
\hspace*{3ex}\Rnode{m}{\strut}\hspace*{3ex}\Rnode{n}{\strut}\ncline[arrows=->,linewidth=2pt,arrowsize=9pt,arrowinset=.3,arrowlength=.5]{m}{n}\hspace*{5ex}
\Rnode{a}{a\strut}\parr \Rnode{c}{c\strut}
\hspace*{4ex}
(\Rnode{cc}{\strut\dual c}\tensor\Rnode{aa}{\strut\dual a})
\with 
(\Rnode{cc'}{\strut\dual c}\tensor\Rnode{aa'}{\strut\dual a})
\ncbar[angle=90,nodesep=2pt,arm=.6cm]{a}{aa}\ncput{\pnode{xa}}
\ncbar[angle=-90,nodesep=2pt,arm=.6cm]{a}{aa'}\ncput[npos=1.3]{\pnode{ya}}
\ncbar[angle=90,nodesep=2pt,arm=.2cm]{c}{cc}\ncput[npos=1.3]{\pnode{xc}}
\ncbar[angle=-90,nodesep=2pt,arm=.2cm]{c}{cc'}\ncput[npos=1.33]{\pnode{yc}}
\conflictline{xa}{xc}
\conflictline{ya}{yc}
\conflictline{xa}{yc}
\conflictline{xc}{ya}
\hspace*{5ex}\Rnode{m}{\strut}\hspace*{3ex}\Rnode{n}{\strut}\ncline[arrows=->,linewidth=2pt,arrowsize=9pt,arrowinset=.3,arrowlength=.5]{n}{m}\hspace*{3ex}
\end{math}
\begin{prooftree}\thickness=.08em\proofdotseparation=4pt
\[
  \branch
  \justifies a\parr c,\,\dual c\tensor\dual a
  \using\!\parr\!\!
\]
\hspace*{0ex}
\[
  \branch
  \justifies a\parr c,\,\dual c\tensor\dual a
  \using\!\parr\!\!
\]
\justifies a\parr c\,,\,(\dual c\tensor\dual a)\with (\dual c\tensor\dual a)
\using\!\!\with\!\!
\end{prooftree}\hspace*{-5ex}
\end{center}

\begin{center}\vspace*{5ex}\hspace*{1ex}%
\begin{prooftree}\thickness=.08em
\[
  \justifies
  \atomOne,\dual \atomOne
\]
\hspace{3ex}
\[
 \[
   \justifies
   \dual \atomTwo,\atomTwo
 \]
 \hspace{2ex}
 \[
   \justifies
   \dual \atomTwo,\atomTwo
 \]
 \justifies
 \dual \atomTwo,\atomTwo\with \atomTwo
 \using \with
\]
\justifies
\atomOne, \dual \atomOne\tensor \dual \atomTwo, \atomTwo\with \atomTwo
\using \tensor
\end{prooftree}
\hspace*{24ex}
\newcommand{\slice}{%
\[
  \[
    \justifies
    \atomOne,\dual \atomOne
  \]
  \hspace{2ex}
  \[
    \justifies
    \dual \atomTwo,\atomTwo
  \]
  \justifies
  \atomOne,\dual \atomOne\tensor \dual \atomTwo,\atomTwo
  \using \tensor
\]}
\begin{prooftree}\thickness=.08em
\slice\hspace{2ex}\slice
\justifies
\atomOne, \dual \atomOne\tensor \dual \atomTwo, \atomTwo\with \atomTwo
\using \with
\end{prooftree}\hspace*{-4ex}
\vspace{1ex}\end{center}

\begin{center}
\hspace*{-3ex}\pnode{m}\hspace*{50ex}\pnode{m'}

\vspace*{.5ex}

\hspace*{-3ex}\pnode{n}\hspace*{50ex}\pnode{n'}%
\ncline[arrows=->,linewidth=2pt,arrowsize=9pt,arrowinset=.3,arrowlength=.5]{m}{n}%
\ncline[arrows=->,linewidth=2pt,arrowsize=9pt,arrowinset=.3,arrowlength=.5]{m'}{n'}
\end{center}

\begin{center}%
\vspace*{3ex}%
\newcommand{\gap}{\hspace{5ex}}%
\newcommand{\sequent}{\begin{math}%
\Rnode{a}{a}
\gap
\Rnode{a'}{\dual a}
\tensor
\Rnode{\atomTwo'}{\dual \atomTwo}
\gap
\Rnode{\atomTwo1}{\atomTwo}
\with
\Rnode{\atomTwo2}{\atomTwo}
\end{math}}%
\sequent%
\ncbar[angle=90,nodesep=2pt,arm=.2cm]{a}{a'}
\ncbar[angle=-90,nodesep=2pt,arm=.2cm]{\atomTwo'}{\atomTwo1}
\ncput{\pnode{\atomTwo'\atomTwo1}}
\ncbar[angle=90,nodesep=2pt,arm=.2cm]{\atomTwo'}{\atomTwo2}
\ncput{\pnode{\atomTwo'\atomTwo2}}
\hspace*{30ex}
\sequent%
\ncbar[angle=90,nodesep=2pt,arm=.2cm]{a}{a'}
\ncput{\pnode{aa'2}}
\ncbar[angle=-90,nodesep=2pt,arm=.2cm]{a}{a'}
\ncput{\pnode{aa'1}}
\ncbar[angle=90,nodesep=2pt,arm=.2cm]{\atomTwo'}{\atomTwo2}
\ncput{\pnode{\atomTwo'\atomTwo2}}
\ncbar[angle=-90,nodesep=2pt,arm=.2cm]{\atomTwo'}{\atomTwo1}
\ncput{\pnode{\atomTwo'\atomTwo1}}
{\conflictstyle
\nccurve[angleA=-40,angleB=-140]{aa'1}{\atomTwo'\atomTwo1}
\nccurve[angleA=40,angleB=140]{aa'2}{\atomTwo'\atomTwo2}}
\vspace*{3ex}
\end{center}

\caption{\label{fig-egs}Illustrating the surjective translation function from proofs to conflict nets.
The first three rows show how conflict nets are invariant with
respect to raising a $\oplus$-, $\with$- or
\usebox{\parrbox}-rule over a $\with$-rule, respectively: each pair of
proofs (left and right) maps to the same conflict net (centre).
The last two translations (flowing downwards) show that
raising a $\otimes$-rule over a $\with$-rule changes the conflict net;
this seems to be the price of p-time proof translation.
Conflicts between axiom links are shown as dotted edges.  Axiom links
which overlap (share an atom in the sequent) conflict
implicitly.}\vspace*{-2.4ex}
\end{figure}
Examples of conflict nets are shown in Figure~\ref{fig-egs}.
Figure~\ref{fig-egs} also illustrates how translation from a proof to a
conflict net is invariant with respect to raising a
$\plus\!/\!\with\!/\!\parr$-rule over a $\with$-rule.
Table~\ref{table-summary} compares different proof nets.

\begin{table}
\newcommand{\monotransnote}{a}
\newcommand{\brokennote}{b}
\newcommand{\pricenote}{c}
\newcommand{\pelimnote}{d}
\newcommand{\nameparboxwidth}{10ex}
\newcommand{\vthick}{|@{\hspace{.1pt}}|@{\hspace{.1pt}}|}
\begin{center}
\begin{tabular}{@{\hspace{0ex}}l\vthick c|c\vthick c|c\vthick c|c|}
\cline{2-7}
&\multicolumn{1}{c}{}&&\multicolumn{1}{c}{}&&\multicolumn{1}{c}{}&
\\[-.5ex]
&
\multicolumn{1}{r}{\begin{picture}(0,0)\put(0,0){\makebox(11,0)[b]{\raisebox{0ex}[0ex][0ex]{\sl Representation efficiency}}}\end{picture}} 
&&
\multicolumn{1}{r}{\begin{picture}(0,0)\put(-1,0){\makebox(0,0)[b]{\sl Abstraction}}\end{picture}} 
&&
\multicolumn{1}{r}{\begin{picture}(0,0)\put(0,0){\makebox(11,0)[b]{\raisebox{0ex}[0ex][0ex]{\sl Cut elimination}}}\end{picture}}
&
\\[2ex]\hline\vline&&&&&&\\[-1.5ex]
\vline\;\;\sl Proof net 
&
\!\parbox{11ex}{\raggedright\bf P-time correctness}\!
&
\!\parbox{10.5ex}{\raggedright\bf P-time translation\!}\!
&
\!\parbox{12ex}{\raggedright\bf Raise \mbox{$\parr\!/\!\plus\!/\!\with$-rule} over \mbox{$\with$-rule}\!\!}\!\!
&
\!\parbox{12.5ex}{\raggedright\bf Raise \mbox{$\tensor$-rule} over \mbox{$\with$-rule}}\!\!
&
\parbox[c]{7.5ex}{\;\;\;\bf P-time\\[-.3ex]}
&
\!\parbox[c]{10ex}{\bf Confluent\\\small(unit-free)}\!\!
\\[2.8ex]\Hline\vline&&&&&&\\[-1.5ex]
\vline\;\;\parbox{\nameparboxwidth}{\raggedright Box
\\\cite{Gir87}}\!
&
\chmark
&
\chmark
&
\cross
&
\cross
&
\probno
&
\openq
\\[2ex]\hline\vline&&&&&&\\[-1.5ex]
\vline\;\;\parbox{\nameparboxwidth}{\raggedright Monomial \cite{Gir96}}\!
&
\openq
&
\chmark\ftnotemarknospace{\monotransnote}
&
\cross\ftnotemarknospace{\monotransnote}
&
\cross\ftnotemarknospace{\monotransnote}
&
\probno
&
\openq\ftnotemarknospace{\brokennote}
\\[2ex]\hline\vline&&&&&&\\[-1.5ex]
\vline\;\;\parbox{\nameparboxwidth}{\raggedright Slice [HG03,05]}\!
&
\chmark
&
\cross
&
\chmark
&
\chmark
&
\chmark\ftnotemarknospace{\pelimnote}
&
\chmark
\\[2ex]\hline\vline&&&&&&\\[-1ex]
\vline\;\;\parbox{\nameparboxwidth}{\raggedright Conflict}\!
&
\chmark
&
\chmark
&
\chmark
&
\cross\ftnotemarknospace{\pricenote}
&
\probno
&
\openq
\\[1.8ex]\hline
\end{tabular}\!

\vspace*{1ex}
\end{center}

\hspace*{10ex}
\chmark=yes
\hspace{3ex}
\cross=no
\hspace{3ex}
\openq=open question
\hspace{3ex}
\probno=open question, probably no

\vspace*{1ex}

\hspace*{2ex}\ftnotetextmark{\monotransnote}With respect to the canonical (non-surjective) proof translation \cite[p.\,7]{Gir96}.
See footnote~\ref{note-monomial-box}.

\hspace*{2ex}\ftnotetextmark{\brokennote}The definition proposed in \cite[p.\,24]{Gir96} does not work: see Section~\ref{monomial-cut-elim}.

\hspace*{2ex}\ftnotetextmark{\pricenote}Seemingly the price of having a p-time translation from proofs.

\hspace*{2ex}\ftnotetextmark{\pelimnote}P-time since normalisation is slicewise.

\vspace*{2ex}
\caption{\label{table-summary}Comparison of proof nets.}\vspace*{1ex}\hrule
\end{table}

\paragraph*{Related work. }  
The last few years have seen a renaissance of work involving MALL
proof nets, including \cite{Ham04} (extending monomial nets with mix,
analysing softness), \cite{CP05} (a language for MALL proofs, viewed
as processes), \cite{CF05} (a ludics-based analysis of
sequentiality/parallelism), \cite{BHS05} (a fully complete
\emph{relational} model for MALL), \cite{Mai07} (extending Danos 
contractibility \cite{Dan90} to additives, using a distributivity
rewrite), \cite{Abr07} (a domain-theoretic view of unfolding the
$\with$-rules as we go up a proof), to name but a few.\footnote{With
polarization, proof nets become much easier: see
\cite{LT04}.}

In each case the underlying data structure involved are more complex
than a conflict net, carrying additional machinery such as monomial
weights on subformulas, subformula occurrences, focalisation,
contraction nodes, domains, partial left/right resolutions of the
$\with$'s in a sequent, and so on.  Like box nets and monomial nets,
most deal with occurrences of
\emph{subformulas}; the data structure of a conflict net involves only atoms,
true to the spirit of the geometry of interaction \cite{Gir89}.  By
not dealing with internal nodes of subformula trees, which are
sequential, conflict nets are in some sense maximally parallel.

Current work for conflict nets includes arranging them into a
category, possibly via a strongly normalising cut elimination.  A
naive cut elimination can be obtained by emulating the elimination of
box nets (copying empires around).
One possible approach is to try and use pullbacks of (contractible)
coherence spaces to obtain a completely abstract form of cut
hyper-elimination (composition) in a compact closed category.  If it
worked out, this would ensure a forgetful functor to the underlying
compact closed composition of slice nets.

Conflict nets are a variant of (and were inspired by)
\emph{combinatorial proofs} introduced recently for classical logic \cite{Hug06,Hug06i}:
each conflict net can be viewed as a maximal map (homomorphism) of
contractible coherence spaces ($P_4$-free graphs, or cographs), from
axioms to sequent.  The relationship with combinatorial proofs is
sketched in Section~\ref{sec-comb}.

\paragraph*{Acknowledgement. }  I'm grateful to Robin Houston for 
discussions about abstract categorical versions of cut elimination,
based on pullbacks of coherence spaces.  In particular, Robin showed
me how to construct pullbacks in the category of coherence spaces.
I'm also indebted to Roberto Maieli, whose extension of Danos'
contractability criterion \cite{Mai07} stimulated me to think about
MALL proof nets again.

\section{Preliminaries}

\subsection{MALL}\label{sec-mall}

We work with \emph{cut-free, unit-free multiplicative-additive linear
logic} \cite{Gir87}, henceforth denoted MALL.  

Fix a set $\mathcal{A}=\{a,b,c,\ldots\}$ of
\defn{literals} equipped with a function
$\overline{(\rule{1ex}{0ex}\rule{0ex}{1.3ex})}:\mathcal{A}\to\mathcal{A}$
such that $\dual a\neq a$ and $\dual{\dual a}=a$ for all
$a\in\mathcal{A}$.
MALL formulas are generated from literals by the binary connectives
$\tensor$ (tensor) $\parr$ (par) $\with$ (with) and $\plus$ (plus).
Define $\dual\tensor = \parr$, $\dual\parr=\tensor$,
$\dual\plus=\with$ and $\dual\with=\plus$.
Define negation $(.)\perpp$ by $a\perpp=\dual a$ on literals, and
$(A\square B)\perpp=A\perpp\dual\square B\perpp$.
Formulas $A$ and $A\perpp$ are \defn{dual}.
A sequent is a list (finite sequence) $A_1,\ldots,A_n$ of formulas
($n\ge 0$).
Throughout this document we take $P,Q,R,\ldots$ to range over
literals, $A,B,C,\ldots$ over formulas, and
$\Gamma,\Delta,\Sigma,\ldots$ over sequents.

We identify a formula with its parse tree: a tree
with leaves labelled with literals and internal vertices labelled with
connectives, equipped with a linear order on leaves.  Edges are
oriented away from the leaves.
We identify a sequent with its parse forest: the disjoint union of its
formulas (formula parse trees), with a linear order on leaves.  For
example, we identify the three-formula sequent
$\hspace{.5ex}\atomOne\,,\,\dual\atomOne\tensor\dual\atomTwo\,,\,(\atomTwo\with\atomTwo)\tensor\atomOne\hspace{.5ex}$
with the following parse forest:
\begin{center}
\begin{math}\psset{nodesep=1pt}
\newcommand{\gap}{\hspace{5ex}}
\Rnode{1}{\atomOne}
\gap
\Rnode{2}{\dual\atomOne}
\gap
\Rnode{3}{\dual\atomTwo} 
\gap
\Rnode{4}{\atomTwo}
\gap
\Rnode{5}{\atomTwo}
\gap
\Rnode{6}{\atomOne}
\ncline[linestyle=none]{2}{3}
\nbput[labelsep=.5cm]{\rnode{t}{\tensor}}
\ncline{->}{2}{t}
\ncline{->}{3}{t}
\ncline[linestyle=none]{4}{5}
\nbput[labelsep=.5cm]{\rnode{w}{\with}}
\ncline{->}{4}{w}
\ncline{->}{5}{w}
\ncline[linestyle=none]{4}{6}
\nbput[labelsep=1.2cm]{\rnode{p}{\tensor}}
\ncline{->}{w}{p}
\ncline{->}{6}{p}
\end{math}
\vspace*{8ex}
\end{center}
The linear order on leaves is given by the left-to-right order on the page.
Two leaves are \defn{dual} if their literal labels are dual.

If $\Gamma=A_1,\ldots,A_n$, and $\sigma$ be a permutation on $n$
(\ie, a bijection $\{1,\ldots,n\}\to\{1,\ldots,n\}$),
write $\sigma\Gamma$ for the sequent $A_{\sigma 1},\ldots,A_{\sigma n}$.
Proofs are generated using the
rules in Figure~\ref{fig-rules}.
\begin{figure}%
\begin{center}
\begin{prooftree}\thickness=.08em
\strut
\justifies\;
P,\dual P
\;\using\axlabel
\end{prooftree}
\hspace*{10ex}
\begin{prooftree}\thickness=.08em
\Gamma
\justifies\;
\sigma \Gamma
\;\using\permlabel_\sigma\hspace*{-3ex}
\end{prooftree}

\vspace{5ex}

\begin{prooftree}\thickness=.08em
\Gamma,\,A\,,\,B
\justifies\;
\Gamma,\,A\parr B
\;\using\parr
\end{prooftree}
\hspace*{10ex}
\begin{prooftree}\thickness=.08em
\Gamma,\;\;A_i\;\;
\justifies\;
\Gamma,\,A_o\plus A_1
\;\using\plus_i
\end{prooftree}

\vspace{5ex}

\begin{prooftree}\thickness=.08em
\Gamma,A
\hspace*{3ex}
B,\Delta
\justifies\;
\Gamma,A\tensor B,\Delta
\;\using\tensor
\end{prooftree}
\hspace*{12ex}
\begin{prooftree}\thickness=.08em
\Gamma,A
\hspace*{3ex}
\Gamma,B
\justifies\;
\Gamma,A\with B
\;\using\with
\end{prooftree}
\end{center}
\caption{\label{fig-rules}MALL proof rules.  Here $\sigma$ is any permutation on $n$, 
the number of formulas in the sequent $\Gamma$ above the $\permlabel$-rule.}\vspace*{1ex}\hrule
\end{figure}
As a technical convenience, we shall often supress permutation
($\permlabel$) rules, for example, writing
\begin{center}
\begin{prooftree}\thickness=.08em
\Gamma,\,A,\,B,\Delta
\justifies\;
\Gamma,\,A\parr B,\Delta
\;\using\parr
\end{prooftree}
\end{center}
which leaves implicit a permutation rule above and below the
$\parr$-rule, if $\Delta$ is non-empty.

\subsection{Coherence spaces}

We write $\adjacent$ for strict coherence and $\conflict$ for strict
incoherence of coherence spaces \cite[\S3]{Gir87}.  We call
$\adjacent$
\defn{adjacency} and $\conflict$ \defn{conflict}.  
The elements of the web $|X|$ of a coherence space $X$ are
\defn{tokens} of $X$.
Recall that a \defn{map} $X\to Y$ between coherence spaces is a binary
relation $R\subseteq |X|\times|Y|$ which preserves strict coherence and
reflects strict incoherence: $y_1R^{\text{op}}x_1\girstcoh x_2Ry_2$ implies
$y_1\girstcoh y_2$, and $x_1Ry_1\conflict y_2R^{\text{op}}x_2$ implies
$x_1\conflict x_2$.  (We write $xRy$ or $yR^{\text{op}}x$ for $\langle x,y\rangle\in R$.)

\section{Conflict linkings}

\paragraph*{Informal definition.}
A \emph{link} on a sequent $\Gamma$ is an edge between dual leaves.  A
\emph{linking}
on $\Gamma$ is a finite set $L$ of links on $\Gamma$ equipped with a
symmetric, irreflexive binary
\emph{conflict} relation
$\conflict\,\subseteq\,L\times L$ such that overlap implies conflict:
if distinct links $l$ and $m$ share an atom, then $l\conflict m$.
Links may be parallel (between the same pair of leaves).  Examples of
linkings are shown in Figure~\ref{fig-egs}.  When drawing linkings, we
leave implicit the conflicts implied by overlap.

\paragraph{Formalisation.}
A \defn{dual pair} in $\Gamma$ is a pair $\{x,y\}$ of dual leaves in
$\Gamma$.
\begin{definition}
A \defn{linking} on $\Gamma$ is a binary relation $\lambda :
L\to\leavesof{\Gamma}$ from a finite coherence space $L$, whose tokens
are called \defn{links} on $\Gamma$,
to the set $\leavesof{\Gamma}$ of leaves in $\Gamma$, satisfying:
\begin{itemize}
\item
\defn{Dual pair.}
For every link $l$ in $L$ the direct image
$\lambda[l]=\{x\in\leavesof{\Gamma}:\langle l,x\rangle\in\lambda\}$
is a dual pair.
\item
\defn{Overlap.}
If $\langle l,x\rangle\in\lambda$ and $\langle
l',x\rangle\in\lambda$ with $l\neq l'$
($l$ and $l'$ \defn{overlap} at $x$) 
then $l\conflict l'\cohmod{L}$.
\end{itemize}
\end{definition}
We abbreviate a linking $\lambda : L\to\leavesof{\Gamma}$ to $\lambda :
L\to\Gamma$ or \raisebox{0ex}{$L\map{\lambda}{5ex}\Gamma$}.

\section{P-time proof translation function from proofs}

\paragraph*{Informal definition.}
A MALL proof of $\Gamma$ translates to a linking on $\Gamma$ by
viewing each axiom rule as a link on $\Gamma$ (by tracing its two leaves
down the proof into $\Gamma$), and defining $l\conflict m$
iff $l$ and $m$ are in opposite branches above a $\with$-rule.
Figure~\ref{fig-egs} shows examples of proof translation.

\paragraph*{Formalisation.}
The following formalisation is by induction on the number of rules in
a proof.
\begin{itemize}
\item
\emph{Base case. }
The axiom rule
\raisebox{.5ex}[0ex]{\;\;$\begin{prooftree}\thickness=.08em
\justifies \raisebox{.3ex}{$\,P,\,\dual P\,$}
\end{prooftree}$\;\;}
translates to the unique single-link linking on $P,\dual P$.\footnote{If $x$
and $x'$ are the two leaves, the linking is $\,\lambda : I\to P,\dual P\,$
where $I$ has a single token $\bullet$ and
$\lambda=\{\langle\bullet,x\rangle,\langle\bullet,x'\rangle\}$.}
\item
\emph{Inductive step. }
Every instance of a rule induces an inclusion function from the leaves
of each sequent above the line to the sequent below the
line.\footnote{Each sequent (parse forest) above the line is a subgraph of the
sequent below the line.}
Via these leaf inclusions, Figure~\ref{fig-translation}
interprets each rule as an operation on linkings.
\begin{figure}%
\begin{center}
\begin{prooftree}\thickness=.08em
L\map{\lambda}{5ex}\Gamma
\justifies\;\rule{0ex}{3ex}
L\umap{\lambda}{5ex}\sigma\Gamma
\;\using\permlabel_\sigma
\end{prooftree}
\hspace*{5ex}
\begin{prooftree}\thickness=.08em
L\map{\lambda}{5ex}\Gamma,\,A\,,\,B
\justifies\;\rule{0ex}{3ex}
L\umap{\lambda}{5ex}\Gamma,\,A\parr B
\;\using\parr
\end{prooftree}
\hspace*{8ex}
\begin{prooftree}\thickness=.08em
L\map{\lambda}{5ex}\Gamma,\;\;A_i\;\;
\justifies\;\rule{0ex}{3ex}
L\umap{\lambda}{5ex}\Gamma,\,A_1\plus A_2
\;\using\plus_i
\end{prooftree}

\vspace{8ex}

\begin{prooftree}\thickness=.08em
L\map{\lambda}{5ex}\Gamma,A
\hspace*{8ex}
M\map{\mu}{5ex}B,\Delta
\justifies\;\rule{0ex}{3.2ex}
L\times M\umap{\lambda\cup\mu}{8ex}\Gamma,A\tensor B,\Delta
\;\using\tensor
\end{prooftree}
\hspace*{12ex}
\begin{prooftree}\thickness=.08em
L\map{\lambda}{5ex}\Gamma,A
\hspace*{8ex}
M\map{\mu}{5ex}\Gamma,B
\justifies\;\rule{0ex}{3.2ex}
L+ M\umap{\lambda\cup\mu}{8ex}\Gamma,A\with B
\;\using\with
\end{prooftree}
\end{center}
\caption{\label{fig-translation}Inductive translation from a proof to a conflict linking.}\vspace*{1ex}\hrule
\end{figure}
The sum $L+M$ in the interpretation of the $\with$-rule is the
disjoint union (categorical sum/coproduct) of the coherence spaces $L$
and $M$, denoted $L\plus M$ in \cite{Gir87}.
Without loss of generality, we assume the canonical injections from
the token sets of $L$ and $M$ into the token set of $L+M$ are
inclusions.
The product $L\times M$ in the interpretation of the $\tensor$-rule is
$L+M$ together with strict coherence between every token in
$L$ and every token in $M$.  This is categorical product, denoted
$L\with M$ in \cite{Gir87}.
\end{itemize}
Each rule interpretation preserves the \emph{Dual pair} and \emph{Overlap}
conditions in the definition of a linking.  Thus the translation of a
proof is a well-defined linking.

A linking is \defn{sequentializable} if it is the translation of a
proof; any such a proof is a \defn{sequentialization} of the linking.

\section{Slicings}

This section defines a \emph{slicing} as a refinement of a linking, a
stepping stone towards the definition of conflict net.

A coherence space is \defn{contractible} if its web is
finite and $P_4$-free (no induced four-vertex path \cite{Sei74}):
whenever $x_1\conflict x_2\conflict x_3\conflict x_4$ for distinct
$x_i$ then $x_1\conflict x_3$ or $x_2\conflict x_4$ or $x_1\conflict
x_4$
\cite{Hu99}.
Define $\cohof{\Gamma}$ as the coherence space whose
tokens are the leaves of $\Gamma$ with
$x\conflict y$ iff $x\neq y$ and the smallest subformula containing
$x$ and $y$ is additive.\footnote{In other words, $x\conflict y$ iff
$x$ and $y$ are in the same formula $A$, and the first
common vertex along the paths from $x$ and $y$ to the root of $A$ is labelled $\with$ or $\plus$.
Equivalently, the
join (least upper bound) $z$ of $x$ and $y$ exists when we interpret
$\Gamma$ as a partial order with leaves maximal and roots minimal, and
$z$ is labelled $\with$ or $\plus$.}  
If $\Gamma$ is non-empty, its coherence space $\cohof{\Gamma}$ is
contractible (a simple induction).
\begin{definition}
A \defn{slicing} on $\Gamma$ is a maximal map $\lambda :
L\to\cohof\Gamma$ from a contractible coherence space $L$.
\end{definition}
Maximality is with respect to inclusion among maps
$L\to\cohof{\Gamma}$.\footnote{Thus $\lambda$ is maximal iff it is a
maximal clique in $L\lolly\cohof{\Gamma}$.}
An example of a slicing
is shown in
Figure~\ref{fig-slicing} with its underlying maximal map clarified.
\begin{figure}
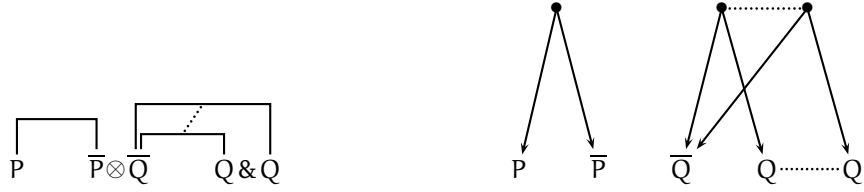

\begin{center}%
\vspace*{13ex}%
\begin{math}%
\newcommand{\gap}{\hspace{5ex}}%
\newcommand{\height}{12ex}
\Rnode{P}{P}
\gap
\Rnode{P'}{\dual P}
\tensor
\Rnode{Q'}{\dual Q}
\gap
\Rnode{Q1}{Q}
\with
\Rnode{Q2}{Q}
\ncbar[angle=90,nodesep=2pt,arm=.4cm]{P}{P'}
\ncbar[angle=90,nodesep=2pt,arm=.2cm,offsetA=-1pt]{Q'}{Q1}
\ncput{\pnode{Q'Q1}}
\ncbar[angle=90,nodesep=2pt,arm=.6cm,offsetA=1pt]{Q'}{Q2}
\ncput{\pnode{Q'Q2}}
\conflictline{Q'Q1}{Q'Q2}
\hspace*{18ex}
\psset{nodesepA=.5pt,nodesepB=.5pt}%
\Rnode{P}{P}
\gap
\Rnode{P'}{\dual P}
\gap
\Rnode{Q'}{\dual Q}
\gap
\Rnode{Q1}{Q}
\gap
\Rnode{Q2}{Q}
\conflictline{Q1}{Q2}
\psset{nodesepB=2pt}
\ncline[linestyle=none]{P}{P'}
\naput[labelsep=\height]{\rnode{l}{\bullet}}
\ncline[nodesepA=-1pt]{->}{l}{P}
\ncline[nodesepA=-1pt]{->}{l}{P'}
\ncline[linestyle=none]{Q'}{Q1}
\naput[labelsep=\height]{\rnode{m}{\bullet}}
\ncline[nodesepA=-1pt]{->}{m}{Q'}
\ncline[nodesepA=-1pt]{->}{m}{Q1}
\ncline[linestyle=none]{Q1}{Q2}
\naput[labelsep=\height]{\rnode{n}{\bullet}}
\ncline[nodesepA=-1pt]{->}{n}{Q'}
\ncline[nodesepA=-1pt]{->}{n}{Q2}
\conflictline[nodesep=0pt]{m}{n}
\vspace*{1ex}
\end{math}
\end{center}
\caption{\label{fig-slicing}An example of a slicing $\lambda : L\to \Gamma$ with $\Gamma=P,\dual P\otimes\dual Q,Q,Q$.  
The underlying maximal map $\lambda : L\to\cohof{\Gamma}$ between
contractible coherence spaces is shown on the right.  The fives tokens of
the coherence space $\cohof\Gamma$ are shown with their literal
labels.  The three links (tokens) of $L$ are shown as
$\bullet$.
Note that $\lambda$ is indeed maximal: were we to add any edge to the
binary relation $\lambda$, it would no longer be a coherence space
map.}\vspace{1ex}\hrule
\end{figure}

\begin{proposition}
Checking that a linking $\lambda : L\to\Gamma$ is a slicing is
p-time in the sizes of\/ $L$ and\/ $\Gamma$.
\end{proposition}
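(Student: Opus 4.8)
The plan is to unpack the definition of a slicing into three independently checkable conditions and show each is p-time. A slicing on $\Gamma$ is a maximal map $\lambda : L\to\cohof{\Gamma}$ from a contractible coherence space $L$, so checking that the given linking $\lambda : L\to\Gamma$ is a slicing amounts to checking: (i) $L$ is contractible; (ii) $\lambda$ is a coherence-space map into $\cohof{\Gamma}$; and (iii) $\lambda$ is maximal among such maps. First I would precompute the conflict relation of $\cohof{\Gamma}$: for each of the $O(\leavesof{\Gamma}^2)$ pairs of leaves $x,y$, walk up the parse forest to their least common ancestor and record $x\conflict y$ iff that ancestor is labelled $\with$ or $\plus$. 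This is p-time in $\Gamma$ and gives constant-time access to coherence and conflict of any pair of leaves thereafter.

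For (i), contractibility is finiteness (immediate, as $L$ is given finite) plus $P_4$-freeness of the conflict graph of $L$. I would check this by naive enumeration of all four-tuples of distinct tokens of $L$, testing the stated implication — $x_1\conflict x_2\conflict x_3\conflict x_4$ forces one of $x_1\conflict x_3$, $x_2\conflict x_4$, $x_1\conflict x_4$ — which is $O(|L|^4)$, hence p-time (even though linear-time cograph recognition is available, polynomial is all that is needed). For (ii), I would directly verify the two map clauses, preservation of strict coherence and reflection of strict incoherence, by ranging over the relevant pairs of edges $\langle l,x\rangle,\langle l',x'\rangle\in\lambda$ and testing the coherence of $l,l'$ in $L$ against that of $x,x'$ in $\cohof{\Gamma}$. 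There are $O(|L|^2\cdot\leavesof{\Gamma}^2)$ such pairs, so this too is p-time.

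The only step that is not completely mechanical is (iii), maximality, since a priori it quantifies over the potentially exponentially many maps extending $\lambda$. The key observation I would use is that \emph{every subrelation of a map is again a map}: both map clauses are universally quantified conditions whose premises mention only edges that are present, so deleting edges can never create a violation. Consequently $\lambda$ fails to be maximal iff some \emph{single} extra edge can be adjoined while remaining a map; that is, $\lambda$ is maximal iff for every pair $\langle l,x\rangle\notin\lambda$ the augmented relation $\lambda\cup\{\langle l,x\rangle\}$ violates clause (ii). Equivalently, in the language of the footnote, $\lambda$ is a maximal clique of $L\lolly\cohof{\Gamma}$ iff every token outside $\lambda$ is incoherent in $L\lolly\cohof{\Gamma}$ with some token of $\lambda$. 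There are only $O(|L|\cdot\leavesof{\Gamma})$ candidate edges, and testing each against the existing edges of $\lambda$ (it suffices to inspect the new tuples the edge creates) is p-time, so maximality is checkable in p-time.

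Combining the three p-time checks gives the result. The main conceptual point — and the one I would highlight — is the local characterisation of maximality in the previous paragraph: it is what collapses an apparently exponential search into a polynomial one. Everything else is routine coherence-space bookkeeping over $L$ and $\cohof{\Gamma}$.
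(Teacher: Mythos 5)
Your proposal is correct and follows essentially the same route as the paper: check contractibility, the map conditions, and maximality, with the crucial step being that maximality reduces to testing single-edge extensions (the paper justifies this via maximal cliques in $L\lolly\cohof{\Gamma}$, you via downward-closure of the map conditions --- the same fact). The only cosmetic differences are that the paper cites a linear-time cograph recognition algorithm where you use naive $O(|L|^4)$ enumeration (both suffice for p-time) and additionally re-checks the dual-pair condition, which your reading of the statement reasonably treats as already guaranteed by $\lambda$ being a linking.
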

\begin{proof}
Checking that $\lambda$ is a map (preserving $\girstcoh$ and
reflecting $\conflict$) is clearly polynomial.  Checking
contractibility ($P_4$-freeness) is linear \cite{CPS85}.
Checking direct images are dual pairs is obviously polynomial.
Checking maximality is polynomial: for every edge $e\not\in\lambda$ we
check that $\lambda\cup\{e\}$ is not a map.\footnote{It suffices to
test with single extra edges $e$ since a map $R:X\to Y$ is maximal
iff it is a maximal clique in the coherence space $X\lolly Y$.}
\end{proof}
A \defn{slice} of a slicing $\lambda : L\to\cohof{\Gamma}$ is a
maximal clique in $L$.\footnote{A clique $C$ is a set of pairwise
coherent tokens: if $x,y\in C$ and $x\neq y$ then $x\girstcoh y$.}
The two slices of the example in Figure~\ref{fig-slicing} are illustrated below.
\begin{center}%
\vspace*{3ex}%
\begin{math}%
\newcommand{\gap}{\hspace{5ex}}%
\newcommand{\height}{12ex}
\Rnode{P}{P}
\gap
\Rnode{P'}{\dual P}
\tensor
\Rnode{Q'}{\dual Q}
\gap
\Rnode{Q1}{Q}
\with
\Rnode{Q2}{Q}
\ncbar[angle=90,nodesep=2pt,arm=.4cm]{P}{P'}
\ncbar[angle=90,nodesep=2pt,arm=.2cm,offsetA=-1pt]{Q'}{Q1}
\hspace*{14ex}
\Rnode{P}{P}
\gap
\Rnode{P'}{\dual P}
\tensor
\Rnode{Q'}{\dual Q}
\gap
\Rnode{Q1}{Q}
\with
\Rnode{Q2}{Q}
\ncbar[angle=90,nodesep=2pt,arm=.4cm]{P}{P'}
\ncbar[angle=90,nodesep=2pt,arm=.6cm,offsetA=1pt]{Q'}{Q2}
\end{math}
\end{center}
An \defn{additive resolution} of $\Gamma$ is a maximal clique in
$\cohof{\Gamma}$ 
\cite{HG03,HG05}.
The \defn{image} of a set $Z\subseteq X$ under a binary relation $R\subseteq X\times Y$ 
is $\{\,y\in Y:zRy\text{ for some }z\in Z\,\}$.
The following proposition formalises the sense in which ``every slice
is an MLL linking'' (\cf\ \cite{Gir87,Gir96,HG03,HG05}).
\begin{proposition}\label{prop-slices}
\!\!Let\/ $\lambda : L\to\Gamma$ be a non-empty slicing.\!
The image of every slice of\/ $\lambda$ is an additive
resolution of\/ $\Gamma$.
\end{proposition}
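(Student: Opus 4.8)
The plan is to split the statement into two parts: that the image $\lambda[C]$ of a slice $C$ is a clique of $\cohof\Gamma$, and that this clique is maximal. The first part is short. Take distinct leaves $y_1,y_2\in\lambda[C]$, witnessed by links $l_1,l_2\in C$ with $\langle l_1,y_1\rangle,\langle l_2,y_2\rangle\in\lambda$. If $l_1=l_2$, then since $\lambda$ reflects $\conflict$ and $\conflict$ is irreflexive, $y_1\conflict y_2$ would force $l_1\conflict l_1$, impossible; hence $y_1\girstcoh y_2$. If $l_1\neq l_2$, then $l_1\girstcoh l_2$ because $C$ is a clique, and since $\lambda$ preserves $\girstcoh$ we again obtain $y_1\girstcoh y_2$. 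Thus $\lambda[C]$ is a clique, i.e.\ sits inside some additive resolution.

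The substance is maximality, which I would attack by contradiction. Suppose some leaf $z$ satisfies $z\girstcoh y$ for every $y\in\lambda[C]$ but $z\notin\lambda[C]$, so that $\lambda[C]\cup\{z\}$ is a strictly larger clique. Write $S_z=\{l:\langle l,z\rangle\in\lambda\}$ for the links sent to $z$, and $B=\{l:\langle l,y'\rangle\in\lambda\text{ for some }y'\conflict z\}$ for the links whose image meets a conflict of $z$. Maximality of $\lambda$ (equivalently, $\lambda$ being a maximal clique of $L\lolly\cohof\Gamma$) yields a covering statement: for every link $l\notin S_z$ the edge $\langle l,z\rangle$ cannot be added without breaking preservation of $\girstcoh$ or reflection of $\conflict$, and unwinding the two failure modes shows that such an $l$ must be adjacent in $L$ to a link of $S_z\cup B$ (or itself lie in $B$). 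Hence every link of $L$ lies in $S_z$, lies in $B$, or is adjacent to one of them. Two observations pin down the geometry: $S_z$ is an independent set (two adjacent links both sent to $z$ would make preservation demand $z\girstcoh z$), and $S_z$ is entirely in conflict with $B$ (if $s\in S_z$ were adjacent to $b\in B$, preservation would force $z\girstcoh y'$ for the conflicting $y'$). Finally $C$ is disjoint from $S_z\cup B$, since a link of $C$ in either set would put a conflict of $z$ into $\lambda[C]$, against $z\girstcoh\lambda[C]$.

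The heart of the argument is to turn this covering into a contradiction using maximality of the slice $C$ together with contractibility of $L$. The tool I would use is the cograph fact that in a $P_4$-free graph every maximal clique meets every maximal independent set (a routine induction over the series/parallel decomposition of $L$). Extending $S_z$, together with a maximal independent subset of $B$, to a maximal independent set $I$ of $L$, this fact produces a link $c\in C\cap I$; being in $I$, $c$ is adjacent to none of the chosen links, so by the covering it must be adjacent to an \emph{unchosen} link of $B$, which in turn is adjacent to a chosen one. The goal is to exploit this to re-route some link of $C$ to $z$ — namely to add an edge $\langle d,z\rangle$ for a $d\in C$ in conflict with every blocker of $z$ — contradicting maximality of $\lambda$. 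Locating such a $d$ is exactly where $P_4$-freeness is indispensable: a covering of $L$ by the closed neighbourhoods of the mutually conflicting sets $S_z$ and $B$, with a maximal clique $C$ avoiding both, is perfectly consistent along an induced $P_4$ but collapses in a cograph, forcing the assumed $z$ out of existence. I expect this reconciliation — matching the domination forced by maximality of $\lambda$ against the maximality of $C$ — to be the main obstacle; the rest is bookkeeping with the preservation/reflection conditions.

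As a cross-check and an alternative route, one can instead induct on the construction of $\cohof\Gamma$ from single leaves by categorical product $\times$ (for $\tensor$, $\parr$ and sequent juxtaposition, all of which make leaves cross-adjacent) and coproduct $+$ (for $\with$ and $\plus$, which make them cross-conflicting). A map into a product splits as a pair of maximal maps into the factors sharing the same $C$, reducing to the factors; reflection of $\conflict$ forces any slice to meet only one side of a coproduct, reducing to a single branch; and the single-leaf base case is precisely the cograph fact above applied to $S_z$. The coproduct step needs the same care with links mapped to neither side, so the obstacle is unchanged; I would pursue whichever bookkeeping turns out cleaner.
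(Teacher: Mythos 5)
The clique half of your argument is correct and complete, and your extraction of the covering property from maximality of $\lambda$ (every link lies in $S_z\cup B$ or is adjacent to it, $S_z$ is independent and wholly in conflict with $B$, and $C$ avoids $S_z\cup B$) is sound. The gap is the endgame of the maximality half, and it is not just unfinished bookkeeping: the plan cannot close as described. Your intended contradiction is to produce a $d\in C$ in conflict with every element of $S_z\cup B$ and re-route it to $z$; but the covering you have just derived says precisely that no such $d$ exists, so you are asking the same hypothesis to yield both a statement and its negation. Nor does the configuration ``collapse in a cograph'': take $L$ to be the $P_4$-free graph $C_4$ with tokens $v_1,v_2,u_1,u_2$, adjacencies $v_1\girstcoh v_2$, $u_1\girstcoh u_2$, $v_1\girstcoh u_2$, $v_2\girstcoh u_1$, and conflicts $v_1\conflict u_1$, $v_2\conflict u_2$. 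Then $C=\{v_1,v_2\}$ is a maximal clique, and with $S_z=\emptyset$, $B=\{u_1,u_2\}$ every constraint you derived is satisfied while $C$ avoids $S_z\cup B$. So no contradiction is available at this purely combinatorial level; the missing ingredient is the structure (indeed the contractibility) of the \emph{target} $\cohof{\Gamma}$, which your first route never engages beyond the local picture around $z$.

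Your ``alternative route'' is in fact the correct proof, and it is essentially the content of what the paper itself invokes: the proof in the paper is a one-line appeal to a cited theorem that a non-empty map between contractible coherence spaces is maximal iff it preserves maximal cliques. That theorem is proved exactly by your second sketch: induct on the cotree of the target, splitting over products (join, for $\tensor$, $\parr$ and sequent juxtaposition) and coproducts (cross-conflict, for $\with$ and $\plus$), with the base case a single token $z$ --- where $B=\emptyset$, so $S_z$ is a dominating independent set, hence a maximal independent set, hence meets $C$ by your cograph fact. Contrary to your remark that ``the obstacle is unchanged'' there, the obstacle disappears in that route, because the problematic set $B$ is empty in the base case and the product/coproduct steps are genuine reductions. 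What remains is real but routine care: showing restrictions to factors are again maximal maps, that no adjacency crosses a coproduct (by preservation of $\girstcoh$), and handling links whose image becomes a singleton or empty once you recurse below the dual-pair level. Commit to that induction and drop the first route.
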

\begin{proof}
A corollary of \cite[Prop.\,2.2]{Hu99}: a non-empty map between contractible
coherence spaces is maximal iff it preserves maximal cliques, \ie, the
image of any maximal clique is a maximal clique.
\end{proof}
Note that the proposition holds for the two slices depicted above.
The proposition is somewhat surprising, since checking every slice
appears exponential-time (because a slice is a subset).

\section{Introducing erasure: Boxless nets}\label{sec-implicit}

In Section~\ref{sec-erasure} we define a conflict net as a slicing
which is \emph{erasable} under a confluent, terminating (strongly
normalising) \emph{erasure} rewrite $\leadsto$.
Erasability is checkable in p-time in the number of links and in the
number of leaves in the sequent.
A form of erasure will also yield p-time correctness for the slice
nets of \cite{HG03,HG05}.
For didactic purposes, we begin by defining erasure in a simple
setting related to box nets \cite{Gir87}, since that is the most
likely to be familiar to the reader.  However, the reader can safely
skip to Section~\ref{sec-erasure} without loss of continuity.

We shall describe a variant of box nets in which the
circumscribing boxes are not drawn explicitly.  
Accordingly, we shall refer to them as \emph{boxless nets}.  
The translation from a proof to a boxless net is exactly the same as
the translation to a box net --- only one forgets to draw the boxes.
For example, the two proofs on page~\ref{intro-proofs} translate
(respectively) to the following pair of box nets:
\begin{center}
\begin{math}
\begin{pspicture}[nodesep=2pt](-3,-.3)(3,2.5)
\newcommand{\innerbox}[3]{
  \rput(-.5,0){\rnode{bb}{\strut$#1\with #2$}}
  \rput(.5,0){\rnode{y}{\strut$#3$}}
  \pnode(0,1.3){lefttop}
  \ncline[linestyle=solid,linewidth=.3pt]{bb}{y}
  \ncbar[linestyle=solid,linewidth=.3pt,nodesepB=0pt,angle=-180,arm=.4cm]{bb}{lefttop}
  \ncbar[linestyle=solid,linewidth=.3pt,nodesepB=0pt,angle=0,arm=.4cm]{y}{lefttop}
  \rput(-.8,.6){$\rnode{a}{#1}\hspace*{2ex}\rnode{z}{#3}$}
  \ncbar[angle=90,arm=5pt]{a}{z}
  \rput(.4,.6){$\rnode{a}{#2}\hspace*{2ex}\rnode{z}{#3}$}
  \ncbar[angle=90,arm=5pt]{a}{z}
}
\rput(-1.2,0){\rnode{aa}{\strut$\leafOne\with\leafTwo$}}
\rput(1.2,0){\rnode{zz}{\strut$\leafThree\with\mkern-4mu'\leafFour$}}
\pnode(0,2.3){top}
\ncline[linestyle=solid,linewidth=.3pt]{aa}{zz}
\ncbar[linestyle=solid,linewidth=.3pt,nodesepB=0pt,angle=-180,arm=1.1cm]{aa}{top}
\ncbar[linestyle=solid,linewidth=.3pt,nodesepB=0pt,angle=0,arm=1.1cm]{zz}{top}
\rput(-1.2,.7){\innerbox{\leafOne}{\leafTwo}{\leafThree}}
\rput(1.44,.7){\innerbox{\leafOne}{\leafTwo}{\leafFour}}
\end{pspicture}
\hspace*{6ex}
\begin{pspicture}[nodesep=2pt](-3,-.3)(3,2.5)
\newcommand{\innerbox}{%
  \rput(-.5,0){\rnode{b}{\strut$\leafOne$}}
  \rput(.5,0){\rnode{yy}{\strut$\leafThree\with\mkern-4mu'\leafFour$}}
  \pnode(0,1.3){lefttop}
  \ncline[linestyle=solid,linewidth=.3pt]{b}{yy}
  \ncbar[linestyle=solid,linewidth=.3pt,nodesepB=0pt,angle=-180,arm=.4cm]{b}{lefttop}
  \ncbar[linestyle=solid,linewidth=.3pt,nodesepB=0pt,angle=0,arm=.4cm]{yy}{lefttop}
  \rput(-.4,.6){$\rnode{a}{\leafOne}\hspace*{2ex}\rnode{z}{\leafThree}$}
  \ncbar[angle=90,arm=5pt]{a}{z}
  \rput(.8,.6){$\rnode{a}{\leafTwo}\hspace*{2ex}\rnode{z}{\leafFour}$}
  \ncbar[angle=90,arm=5pt]{a}{z}
}
\rput(-1.2,0){\rnode{aa}{\strut$\leafOne\with\leafTwo$}}
\rput(1.2,0){\rnode{zz}{\strut$\leafThree\with\mkern-4mu'\leafFour$}}
\pnode(0,2.3){top}
\ncline[linestyle=solid,linewidth=.3pt]{aa}{zz}
\ncbar[linestyle=solid,linewidth=.3pt,nodesepB=0pt,angle=-180,arm=1.1cm]{aa}{top}
\ncbar[linestyle=solid,linewidth=.3pt,nodesepB=0pt,angle=0,arm=1.1cm]{zz}{top}
\rput(-1.44,.7){\innerbox}
\rput(1.2,.7){\innerbox}
\end{pspicture}
\end{math}
\end{center}
Now emphasise the superposition/contraction of these formulas, and
drop the surrounding boxes:
\begin{center}\vspace{2ex}
\begin{math}
\psset{nodesep=1pt}
\newcommand{\litgap}{\hspace*{3ex}}
\newcommand{\linkgap}{\hspace*{4ex}}
\newcommand{\midgap}{\hspace*{5ex}}
\newcommand{\botshift}{\hspace*{4ex}}
\begin{array}{c@{\hspace*{5ex}}c}
\Rnode{leafOneATop}{\leafOne}
\litgap
\Rnode{leafThreeATop}{\leafThree}
\linkgap
\Rnode{leafTwoBTop}{\leafTwo}
\litgap
\Rnode{leafThreeBTop}{\leafThree}
&
\Rnode{leafOneCTop}{\leafOne}
\litgap
\Rnode{leafFourCTop}{\leafFour}
\linkgap
\Rnode{leafTwoDTop}{\leafTwo}
\litgap
\Rnode{leafFourDTop}{\leafFour}
\ncbar[angle=90,arm=5pt]{leafOneATop}{leafThreeATop}
\ncbar[angle=90,arm=5pt]{leafTwoBTop}{leafThreeBTop}
\ncbar[angle=90,arm=5pt]{leafOneCTop}{leafFourCTop}
\ncbar[angle=90,arm=5pt]{leafTwoDTop}{leafFourDTop}
\\[2ex]
\Rnode{leafOneAMid}{\leafOne}
\Rnode{withOneAMid}{\withOne}
\Rnode{leafTwoBMid}{\leafTwo}
\midgap
\Rnode{leafThreeABMid}{\leafThree}
&
\Rnode{leafOneCMid}{\leafOne}
\Rnode{withOneCMid}{\withOne}
\Rnode{leafTwoDMid}{\leafTwo}
\midgap
\Rnode{leafFourCDMid}{\leafFour}
\ncline{leafOneATop}{leafOneAMid}
\ncline{leafThreeATop}{leafThreeABMid}
\ncline{leafTwoBTop}{leafTwoBMid}
\ncline{leafThreeBTop}{leafThreeABMid}
\ncline{leafOneCTop}{leafOneCMid}
\ncline{leafFourCTop}{leafFourCDMid}
\ncline{leafTwoDTop}{leafTwoDMid}
\ncline{leafFourDTop}{leafFourCDMid}
\\[6ex]
\botshift
\Rnode{leafOneBot}{\leafOne}
\Rnode{withOneBot}{\withOne}
\Rnode{leafTwoBot}{\leafTwo}
&
\Rnode{leafThreeBot}{\leafThree}
\Rnode{withTwoBot}{\withTwo}
\Rnode{leafFourBot}{\leafFour}
\botshift
\ncdiag[angleA=-50,angleB=100,arm=0]{withOneAMid}{withOneBot}
\ncdiag[angleA=-130,angleB=80,arm=0]{withOneCMid}{withOneBot}
\ncline{leafThreeABMid}{leafThreeBot}
\ncline{leafFourCDMid}{leafFourBot}
\end{array}
\hspace*{10ex}
\begin{array}{c@{\hspace*{5ex}}c}
\Rnode{leafOneATop}{\leafOne}
\litgap
\Rnode{leafThreeATop}{\leafThree}
\linkgap
\Rnode{leafOneBTop}{\leafOne}
\litgap
\Rnode{leafFourBTop}{\leafFour}
&
\Rnode{leafTwoCTop}{\leafTwo}
\litgap
\Rnode{leafThreeCTop}{\leafThree}
\linkgap
\Rnode{leafTwoDTop}{\leafTwo}
\litgap
\Rnode{leafFourDTop}{\leafFour}
\ncbar[angle=90,arm=5pt]{leafOneATop}{leafThreeATop}
\ncbar[angle=90,arm=5pt]{leafOneBTop}{leafFourBTop}
\ncbar[angle=90,arm=5pt]{leafTwoCTop}{leafThreeCTop}
\ncbar[angle=90,arm=5pt]{leafTwoDTop}{leafFourDTop}
\\[2ex]
\Rnode{leafOneABMid}{\leafOne}
\midgap
\Rnode{leafThreeBMid}{\leafThree}
\Rnode{withTwoBMid}{\withTwo}
\Rnode{leafFourBMid}{\leafFour}
&
\Rnode{leafTwoCDMid}{\leafTwo}
\midgap
\Rnode{leafThreeDMid}{\leafThree}
\Rnode{withTwoDMid}{\withTwo}
\Rnode{leafFourDMid}{\leafFour}
\ncline{leafOneATop}{leafOneABMid}
\ncline{leafThreeATop}{leafThreeBMid}
\ncline{leafOneBTop}{leafOneABMid}
\ncline{leafFourBTop}{leafFourBMid}
\ncline{leafTwoCTop}{leafTwoCDMid}
\ncline{leafThreeCTop}{leafThreeDMid}
\ncline{leafTwoDTop}{leafTwoCDMid}
\ncline{leafFourDTop}{leafFourDMid}
\\[6ex]
\botshift
\Rnode{leafOneBot}{\leafOne}
\Rnode{withOneBot}{\withOne}
\Rnode{leafTwoBot}{\leafTwo}
&
\Rnode{leafThreeBot}{\leafThree}
\Rnode{withTwoBot}{\withTwo}
\Rnode{leafFourBot}{\leafFour}
\botshift
\ncline{leafOneABMid}{leafOneBot}
\ncline{leafTwoCDMid}{leafTwoBot}
\ncdiag[angleA=-50,angleB=100,arm=0]{withTwoBMid}{withTwoBot}
\ncdiag[angleA=-130,angleB=80,arm=0]{withTwoDMid}{withTwoBot}
\end{array}
\end{math}
\end{center}
Finally, draw nodes instead of formulas, to remove some redundancy, and
where two formulas merge, make that explicit by drawing a contraction
node ($\csym$-node):
\begin{center}\vspace{2ex}
\begin{math}
\psset{nodesep=1pt}
\newcommand{\litgap}{\hspace*{3ex}}
\newcommand{\linkgap}{\hspace*{4ex}}
\newcommand{\midgap}{\hspace*{5ex}}
\newcommand{\botshift}{\hspace*{4ex}}
\newcommand{\withOneNode}{\pscirclebox[framesep=0pt,linewidth=.1pt]{\withOne}}
\newcommand{\withTwoNode}{\pscirclebox[framesep=0pt,linewidth=.1pt]{\withOne\begin{picture}(0,0)\put(-4,0){${}'$}\end{picture}}}
\begin{array}{c@{\hspace*{5ex}}c}
\Rnode{leafOneATop}{\leafOne}
\litgap
\Rnode{leafThreeATop}{\leafThree}
\linkgap
\Rnode{leafTwoBTop}{\leafTwo}
\litgap
\Rnode{leafThreeBTop}{\leafThree}
&
\Rnode{leafOneCTop}{\leafOne}
\litgap
\Rnode{leafFourCTop}{\leafFour}
\linkgap
\Rnode{leafTwoDTop}{\leafTwo}
\litgap
\Rnode{leafFourDTop}{\leafFour}
\ncbar[angle=90,arm=5pt]{leafOneATop}{leafThreeATop}
\ncbar[angle=90,arm=5pt]{leafTwoBTop}{leafThreeBTop}
\ncbar[angle=90,arm=5pt]{leafOneCTop}{leafFourCTop}
\ncbar[angle=90,arm=5pt]{leafTwoDTop}{leafFourDTop}
\\[2ex]
\Rnode{leafOneAMid}{\Rnode{withOneAMid}{\Rnode{leafTwoBMid}{\withOneNode}}}
\midgap
\Rnode{leafThreeABMid}{\contractionnodeanon}
&
\Rnode{leafOneCMid}{\Rnode{withOneCMid}{\Rnode{leafTwoDMid}{\withOneNode}}}
\midgap
\Rnode{leafFourCDMid}{\contractionnodeanon}
\psset{nodesepB=-1pt}
\ncline{leafOneATop}{leafOneAMid}
\ncline{leafThreeATop}{leafThreeABMid}
\ncline{leafTwoBTop}{leafTwoBMid}
\ncline{leafThreeBTop}{leafThreeABMid}
\ncline{leafOneCTop}{leafOneCMid}
\ncline{leafFourCTop}{leafFourCDMid}
\ncline{leafTwoDTop}{leafTwoDMid}
\ncline{leafFourDTop}{leafFourCDMid}
\\[5ex]
\botshift
\Rnode{leafOneBot}{\Rnode{withOneBot}{\Rnode{leafTwoBot}{\contractionnodeanon}}}
&
\Rnode{leafThreeBot}{\Rnode{withTwoBot}{\Rnode{leafFourBot}{\withTwoNode}}}
\botshift
\psset{nodesep=-1pt}
\ncline{withOneAMid}{withOneBot}
\ncline{withOneCMid}{withOneBot}
\ncline{leafThreeABMid}{leafThreeBot}
\ncline{leafFourCDMid}{leafFourBot}
\end{array}
\hspace*{10ex}
\begin{array}{c@{\hspace*{5ex}}c}
\Rnode{leafOneATop}{\leafOne}
\litgap
\Rnode{leafThreeATop}{\leafThree}
\linkgap
\Rnode{leafTwoBTop}{\leafTwo}
\litgap
\Rnode{leafThreeBTop}{\leafThree}
&
\Rnode{leafOneCTop}{\leafOne}
\litgap
\Rnode{leafFourCTop}{\leafFour}
\linkgap
\Rnode{leafTwoDTop}{\leafTwo}
\litgap
\Rnode{leafFourDTop}{\leafFour}
\ncbar[angle=90,arm=5pt]{leafOneATop}{leafThreeATop}
\ncbar[angle=90,arm=5pt]{leafTwoBTop}{leafThreeBTop}
\ncbar[angle=90,arm=5pt]{leafOneCTop}{leafFourCTop}
\ncbar[angle=90,arm=5pt]{leafTwoDTop}{leafFourDTop}
\\[2ex]
\Rnode{leafOneAMid}{\Rnode{withOneAMid}{\Rnode{leafTwoBMid}{\contractionnodeanon}}}
\midgap
\Rnode{leafThreeABMid}{\withTwoNode}
&
\Rnode{leafOneCMid}{\Rnode{withOneCMid}{\Rnode{leafTwoDMid}{\contractionnodeanon}}}
\midgap
\Rnode{leafFourCDMid}{\withTwoNode}
\psset{nodesepB=-1pt}
\ncline{leafOneATop}{leafOneAMid}
\ncline{leafThreeATop}{leafThreeABMid}
\ncline{leafTwoBTop}{leafTwoBMid}
\ncline{leafThreeBTop}{leafThreeABMid}
\ncline{leafOneCTop}{leafOneCMid}
\ncline{leafFourCTop}{leafFourCDMid}
\ncline{leafTwoDTop}{leafTwoDMid}
\ncline{leafFourDTop}{leafFourCDMid}
\\[5ex]
\botshift
\Rnode{leafOneBot}{\Rnode{withOneBot}{\Rnode{leafTwoBot}{\withOneNode}}}
&
\Rnode{leafThreeBot}{\Rnode{withTwoBot}{\Rnode{leafFourBot}{\contractionnodeanon}}}
\botshift
\psset{nodesep=-1pt}
\ncline{withOneAMid}{withOneBot}
\ncline{withOneCMid}{withOneBot}
\ncline{leafThreeABMid}{leafThreeBot}
\ncline{leafFourCDMid}{leafFourBot}
\end{array}
\end{math}
\end{center}

\subsection{Circuits}

A \defn{circuit} comprises:
\begin{itemize}
\item A finite, non-empty set of \defn{nodes}. 
\item A finite set of \defn{wires}.  Each wire is labelled with a formula, and is 
assigned a \defn{source} node and, possibly, a \defn{target} node.  If
a target node is present, it is distinct from the source node.  A wire with no
target is an \defn{exit}.
\item Each node has one of the following forms:
\begin{itemize}
\item \defn{Axiom}.  The source of
two wires and the target of none.
The wires are labelled by dual literals.\footnote{If we wish to
include cuts, we define a cut node as the target of two wires,
labelled by dual formulas, and the source of no wire.}
\item \defn{Contraction}.  The target of two wires and the source of one.  All three wires have the same formula.
\item \defn{Binary}.  The target of two wires and the source of one.  The incoming wires are distinguished 
as a \defn{left} wire and a \defn{right} wire.  A binary node is typed
as one of $\tensor$, $\parr$ or $\with$.  If the formula of the left
wire is $A$, the formula of the right wire is $B$, and the node type is $\square$, the formula of
the outgoing wire is $A\square B$.
\item \defn{Plus}.  The target of one wire and the source of one wire.  The incoming wire is distinguished 
as \defn{left} or \defn{right}.  Let $A$ be the formula of the incoming wire.  If the incoming wire is left (resp.\ right)
then the formula of the outgoing wire is $A\plus B$ (resp.\ $B\plus A$) for some formula $B$.
\end{itemize}
\item The graph is connected: for any two nodes $N$ and $N'$ there exists a sequence of nodes $N_1\ldots N_k$ 
with $N_1=N$ and $N_k=N'$ ($k\ge 1$) such that for all
$i\in\{1,\ldots,k-1\}$ the nodes $N_i$ and $N_{i+1}$ are joined by a
wire, \ie, there exists a wire whose source is $N_i$ and target is
$N_{i+1}$, or vice versa.\footnote{By dropping connectedness, and
slightly modifying the definition of erasure below, one could choose to
validate the mix rule.}
\item The exits are equipped with a linear order.  The sequent comprising 
the formulas of the exits, in order, is the \defn{conclusion} of the
circuit.
\end{itemize}
An example of a circuit with concluding sequent $P\with P,\dual
P\with\dual P$ is drawn in Figure~\ref{fig-boxless-net}, formalising the last graph in our motivating discusion above.
\begin{figure}
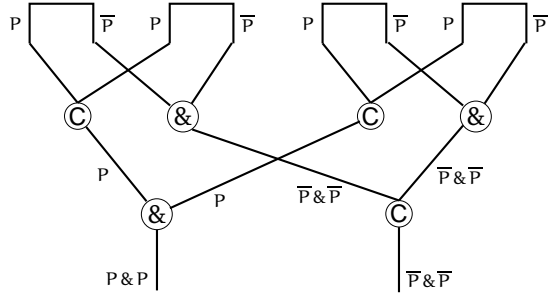

\begin{center}\vspace{2ex}
\begin{math}
\psset{nodesep=1pt}
\newcommand{\litgap}{\hspace*{5ex}}
\newcommand{\linkgap}{\hspace*{6ex}}
\newcommand{\midgap}{\hspace*{6ex}}
\newcommand{\botshift}{\hspace*{4ex}}
\newcommand{\withOneNode}{\pscirclebox[framesep=0pt,linewidth=.1pt]{\withOne}}
\newcommand{\withTwoNode}{\pscirclebox[framesep=0pt,linewidth=.1pt]{\withOne}}
\begin{array}{c@{\hspace*{7ex}}c}
\pnode{leafOneATop}
\litgap
\pnode{leafThreeATop}
\linkgap
\pnode{leafTwoBTop}
\litgap
\pnode{leafThreeBTop}
&
\pnode{leafOneCTop}
\litgap
\pnode{leafFourCTop}
\linkgap
\pnode{leafTwoDTop}
\litgap
\pnode{leafFourDTop}
\psset{nodesep=-1pt}
\ncbar[angle=90,arm=15pt]{leafOneATop}{leafThreeATop}
\wirea{.5}{\leafOne}
\wirea{2.5}{\leafThree}
\ncbar[angle=90,arm=15pt]{leafTwoBTop}{leafThreeBTop}
\wirea{.5}{\leafOne}
\wirea{2.5}{\leafFour}
\ncbar[angle=90,arm=15pt]{leafOneCTop}{leafFourCTop}
\wirea{.5}{\leafTwo}
\wirea{2.5}{\leafThree}
\ncbar[angle=90,arm=15pt]{leafTwoDTop}{leafFourDTop}
\wirea{.5}{\leafTwo}
\wirea{2.5}{\leafFour}
\\[4ex]
\Rnode{leafOneAMid}{\Rnode{withOneAMid}{\Rnode{leafTwoBMid}{\contractionnodeanon}}}
\midgap
\Rnode{leafThreeABMid}{\withTwoNode}
&
\Rnode{leafOneCMid}{\Rnode{withOneCMid}{\Rnode{leafTwoDMid}{\contractionnodeanon}}}
\midgap
\Rnode{leafFourCDMid}{\withTwoNode}
\psset{nodesepB=-1pt}
\ncdiag[angleA=-90,angleB=90,arm=0,nodesepB=0pt]{leafOneATop}{leafOneAMid}
\ncline{leafThreeATop}{leafThreeABMid}
\ncdiag[angleA=-90,angleB=90,arm=0,nodesepB=0pt]{leafTwoBTop}{leafTwoBMid}
\ncline{leafThreeBTop}{leafThreeABMid}
\ncdiag[angleA=-90,angleB=90,arm=0,nodesepB=0pt]{leafOneCTop}{leafOneCMid}
\ncline{leafFourCTop}{leafFourCDMid}
\ncdiag[angleA=-90,angleB=90,arm=0,nodesepB=0pt]{leafTwoDTop}{leafTwoDMid}
\ncline{leafFourDTop}{leafFourCDMid}
\\[5ex]
\botshift
\Rnode{leafOneBot}{\Rnode{withOneBot}{\Rnode{leafTwoBot}{\withOneNode}}}
\nput[labelsep=-7ex]{90}{leafOneBot}{\pnode{conc1}}%
&
\Rnode{leafThreeBot}{\Rnode{withTwoBot}{\Rnode{leafFourBot}{\contractionnodeanon}}}
\nput[labelsep=-7ex]{90}{leafThreeBot}{\pnode{conc2}}%
\botshift
\psset{nodesep=-1pt}
\ncline{withOneAMid}{withOneBot}
\wireb{.5}{\leafOne}
\ncline{withOneCMid}{withOneBot}
\wirea{.75}{\!\!\leafTwo}
\ncdiag[angleA=-55,angleB=90,arm=0,nodesepB=0pt,nodesepA=-1pt]{leafThreeABMid}{leafThreeBot}
\wireb{1.65}{\leafThree\with\leafFour\!\!}
\ncdiag[angleA=-145,angleB=90,arm=0,nodesepB=0pt,nodesepA=-1pt]{leafFourCDMid}{leafFourBot}
\wirea{1.5}{\leafThree\with\leafFour}
\psset{nodesepA=0pt}
\ncline{leafOneBot}{conc1}
\wireb{.8}{\leafOne\with\leafTwo}
\ncline{leafThreeBot}{conc2}
\wirea{.8}{\leafThree\with\leafFour}
\end{array}
\end{math}\vspace*{7ex}
\end{center}
\caption{\label{fig-boxless-net}An example of an erasable circuit (boxless net).}\vspace*{1ex}\hrule
\end{figure}
An axiom node is drawn as a horizontal line segment.  Wires are
oriented downwards in the page (\ie, the target of a wire, when present,
is below its source).  Left/right incoming wires are distinguished by
their contact point being left/right of the centre of the target node.
Contraction nodes are marked $\csym$.  Each wire is labelled with its
formula.  The exits are ordered from left to right in the page.
(The style is similar to interaction nets \cite{Laf90}.)

\subsection{Erasure}

A node is \defn{final} if it is the source of an exit wire.
A node $N$ is \defn{ready} if it is final and it matches one of the
following cases:
\begin{itemize}
\item $N$ is a $\plus$.
\item $N$ is a $\tensor$.  Deleting $N$, and its exit wire, disconnects the circuit
(\ie, the result of deleting $N$ is a disjoint union of two connected components).
\item $N$ is a $\parr$.  Deleting $N$, and its exit wire, does not disconnect the circuit.
\item $N$ is a $\with$.  Every other final node is a contraction-node.
Deleting all final nodes, and their exit wires, yields exactly two
connected components $X_1$ and $X_2$.  Every final node in the
original circuit has one incoming wire
in $X_1$ and the other 
in $X_2$.
\item $N$ is an axiom-node, the unique node of the circuit.
\end{itemize}
Write $X\leadsto_N S$ if $S$ is the set of connected components
resulting from deleting the ready node $N$, each promoted to a circuit
by adding the exit-order induced canonically from the exit-order of
$X$.  By definition of readiness:
\begin{itemize}
\item if $N$ is a $\plus$ or $\parr$ then $S=\{X'\}$, a single
circuit,
\item if $N$ is a $\tensor$, $\with$ or cut-node, then $S=\{X_1,X_2\}$, two circuits.
\item if $N$ is an axiom-node, then $S=\emptyset$, the empty set.
\end{itemize}
If $T$ and $U$ are sets of circuits, write $T\leadsto_{X,N}U$ if
$T=T'\cup\{X\}$ (disjoint union), $X\leadsto_N S$, and $U=T'\cup S$.
(In other words, we replace $X$ by the circuit(s) resulting from
deleting $N$ from $X$.)
Write $T\leadsto U$ if $T\leadsto_{X,N}U$ for some $X$ and $N$.  Note
that $X$ and $N$ are uniquely determined given $T$ and $U$; we call
$N$ the \defn{redex}.
The relation/rewrite $\leadsto$ on sets of circuits is called 
\defn{erasure}.
\begin{proposition}
Erasure $\leadsto$ satisfies the diamond property: if $T\leadsto
U_0$ and $T\leadsto U_1$ with $U_0\neq U_1$, there exists $V$ such
that $U_0\leadsto V$ and $U_1\leadsto V$.
\end{proposition}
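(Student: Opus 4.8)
The plan is to reduce the diamond property to a purely local commutation statement inside a single circuit, and then to a connectivity argument on its underlying graph. Given $T\leadsto U_0$ and $T\leadsto U_1$, each step is determined by a circuit $X$ of $T$ together with a redex $N$ in it (uniquely recoverable, as already noted), so I would first split on whether the two steps fire in the \emph{same} circuit of $T$ or in two \emph{distinct} circuits $X\neq Y$. The distinct-circuit case is immediate: the deletions touch disjoint members of the set, so firing one leaves the other redex untouched and still ready, and firing both in either order yields the common reduct $V=(T\setminus\{X,Y\})\cup S_X\cup S_Y$. All the content therefore lies in the case where both steps fire in one circuit $X$ at distinct redexes $M\neq N$.

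The next step is to narrow the node types. If $M$ were a ready $\with$, then by definition every other final node of $X$ is a contraction node; since contraction nodes are never ready, $M$ would be the \emph{unique} redex, contradicting $M\neq N$. Likewise a ready axiom is the sole node of its circuit and admits no second redex. Hence both $M$ and $N$ are of type $\plus$, $\tensor$ or $\parr$, and in particular each redex's readiness is a condition on that single node together with the global connectivity of $X$, with no interaction through the $\with$-clause. I would then pass to the underlying connectivity graph $G$ of $X$ (nodes as vertices, internal wires as edges, exits dangling), where deleting a final node is deleting a vertex: a $\plus$ is a pendant (degree-one) vertex, a ready $\parr$ is a non-cut vertex, and a ready $\tensor$ is a cut vertex splitting $G$ into exactly two pieces.

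The target reduct is forced: deleting $M$ gives the components of $G-M$, the vertex $N$ sits in one of them, say $C$, and the components of $C-N$ together with the remaining components of $G-M$ are exactly the components of $G\setminus\{M,N\}$ (and symmetrically). So $V$ is the set of components of $G\setminus\{M,N\}$, promoted to circuits with the induced exit-orders; what must be checked is that each one-step reduct is still reducible at the \emph{surviving} redex, i.e. that $N$ remains ready in its component of $G-M$ and $M$ in its component of $G-N$. The subcases involving a $\plus$ succumb to a short argument, since a pendant vertex cannot be the sole bridge between a ready $\parr$/$\tensor$ and the rest of the circuit. The remaining subcases $(\parr,\parr)$, $(\parr,\tensor)$ and $(\tensor,\tensor)$ are the crux and need a biconnectivity analysis, which I would organise via the block--cut tree of $G$: a non-cut vertex lives in a single block, a splitting $\tensor$ is a cut vertex whose two sides are unions of blocks, and one shows that deleting either redex preserves the block structure around the other well enough to preserve its (non-)cut status and its split-count.

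I expect this preservation of the connectivity conditions under prior deletion of the other redex to be the main obstacle, and it is genuinely delicate because it is \emph{not} a theorem of arbitrary graphs: in a four-cycle $M,P,N,Q$ both $M$ and $N$ are non-cut vertices yet $G\setminus\{M,N\}$ is disconnected, so a naive $(\parr,\parr)$ commutation can fail. The argument must therefore exploit the structural constraints on circuits — the axiom/contraction/binary node shapes that govern the vertices through which $M$ and $N$ can interact — and I anticipate the decisive lemma to be that two independently-ready final nodes cannot share the separating structure that the four-cycle obstruction requires. Packaging the type-by-type subcases together with the forced identity of the resulting component multiset then completes the diamond.
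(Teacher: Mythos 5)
Your reduction to the single-circuit, two-distinct-redex case and your elimination of $\with$- and axiom-redexes is exactly the paper's argument. Where you part company is that the paper then declares the remaining $\parr/\plus/\tensor$ cases ``immediate, since each reduction in these cases merely deletes a single vertex from a graph,'' whereas you correctly observe that single-vertex deletion does not in general commute with the connectivity side-conditions on readiness, and you isolate the four-cycle obstruction. Your instinct is sound: the step the paper waves away is precisely where the content lies.

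The gap in your proposal is the unproven ``decisive lemma'' that two independently ready final nodes cannot realise the four-cycle separating structure --- and that lemma is in fact false for circuits as defined. Take two axiom nodes $P$ (wires $a,\dual a$) and $Q$ (wires $b,\dual b$) and two final $\parr$-nodes $M$ and $N$, with $M$ the target of the $a$- and $b$-wires and $N$ the target of the $\dual a$- and $\dual b$-wires (the classic criss-cross structure concluding $a\parr b,\ \dual a\parr\dual b$). This is a connected circuit; deleting $M$ alone leaves $P\!-\!N\!-\!Q$ connected, so $M$ is a ready $\parr$, and symmetrically so is $N$. But after deleting $M$, the node $N$ becomes a cut vertex and is no longer ready, and the axiom nodes are not unique nodes of their circuit, so $\{X\setminus M\}$ and $\{X\setminus N\}$ are two \emph{distinct normal forms}: the diamond property fails for $T=\{X\}$ exactly as stated. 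So neither your block--cut-tree programme nor the paper's one-line dismissal can close the $(\parr,\parr)$ (or $(\parr,\tensor)$, $(\tensor,\tensor)$) cases for arbitrary circuits; a correct proof requires either restricting the hypothesis (e.g.\ to clusters reachable from translations of proofs, where such criss-cross configurations cannot occur) or revising the readiness conditions. As a piece of refereeing your proposal is more valuable than the proof it was meant to reproduce, but as a proof it stops exactly at the point where the statement, in its present generality, breaks.
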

\begin{proof}
Suppose $T\leadsto_{X_i,N_i}U_i$.  Assume $X_0=X_1$, or else the result is immediate.  Let $X=X_0=X_1$.
Necessarily $N_0\neq N_1$ (otherwise $U_0=U_1$), therefore $N_i$
cannot be a $\with$-node (since if a $\with$-node is a redex, there
can be no other redex in the same circuit), and cannot be an
axiom-node.  Without loss of generality, ignore cut-node redexes,
since they are homologous to $\tensor$-redexes.  Thus we are left to
consider the following node-types for the redexes $N_0$ and $N_1$: $\parr$,
$\plus$, $\tensor$.
The diamond property is then immediate, since each reduction in these cases
merely deletes a single vertex from a graph.
\end{proof}
Due to more abstract superposition, erasure on conflict nets
will not satisfy the diamond property.  (It will nonetheless be confluent.)
\begin{proposition}
Erasure $\leadsto$ is terminating (strongly normalising).
\end{proposition}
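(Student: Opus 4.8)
The plan is to exhibit a well-founded measure on sets of circuits that strictly decreases with every erasure step. The natural candidate is the \emph{total node count}: for a set of circuits $T$, let $\nu(T)$ be the number of nodes appearing across all the circuits in $T$. Since $T$ is finite and each circuit has only finitely many nodes, $\nu(T)$ is a well-defined natural number. As $(\mathbb{N},<)$ is well-founded, it then suffices to show that $T\leadsto U$ implies $\nu(U)<\nu(T)$; no infinite reduction sequence can then exist, giving strong normalisation.

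The heart of the argument is a case analysis on the redex type, using the description of $X\leadsto_N S$ given just before the statement. The key observation is that deleting the redex never \emph{creates} a node: when $S$ has two elements the components $X_1$ and $X_2$ merely partition the surviving nodes, and promoting a component to a circuit only adds exit-order data, not nodes. So in every case $\nu$ changes only by the number of deleted nodes. Concretely: if $N$ is a $\plus$, $\parr$ or $\tensor$, exactly one node is removed, so $\nu$ drops by $1$; if $N$ is the (unique) axiom-node, $S=\emptyset$ and that single node vanishes, so $\nu$ drops by $1$; if $N$ is a $\with$, readiness forces every final node to be deleted (the $\with$-node together with the remaining contraction-nodes), so $\nu$ drops by the number of final nodes, which is at least $1$. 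In each case $\nu(U)\le \nu(T)-1<\nu(T)$. Since the rewrite $\leadsto$ on sets of circuits is obtained by replacing $X$ with the outcome of $X\leadsto_N S$ (the rest of $T$ being untouched), the strict decrease of node count in $X$ yields a strict decrease of $\nu$ globally.

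It is worth recording why node count, rather than the number of circuits, is the correct measure: the $\tensor$- and $\with$-steps \emph{increase} the number of circuits (splitting one into two), so a cardinality-of-$T$ argument would fail, whereas the node count still falls because a redex is consumed. I do not expect a genuine obstacle here; the only point requiring care is the verification that the batch-deletion in the $\with$-case and the vanishing of the circuit in the axiom-case are accounted for correctly by $\nu$ (they only help, removing one or more nodes), together with the routine check that splitting into connected components plus canonical exit-ordering conserves nodes. Having established the strictly decreasing measure into the well-ordered set $\mathbb{N}$, termination of $\leadsto$ follows at once.
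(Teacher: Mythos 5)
Your proof is correct and follows essentially the same route as the paper, which argues in one line that the total number of nodes across all circuits strictly decreases with each erasure step. Your case analysis (including the batch deletion of final contraction-nodes in the $\with$-case and the disappearance of the lone axiom-node) just makes explicit the details the paper leaves implicit.
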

\begin{proof}
If $U\leadsto V$ then the disjoint union of the circuits in $V$ has
strictly less nodes than the disjoint union of the circuits in $U$.
\end{proof}
Write $\leadsto^*$ for the transitive closure of erasure $\leadsto$.
\begin{proposition}
Erasure $\leadsto$ is confluent: if $T\leadsto^*U_0$ and
$T\leadsto^*U_1$ then there exists $V$ such that $U_0\leadsto^*V$ and
$U_1\leadsto^*V$.
\end{proposition}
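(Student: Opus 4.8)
The plan is to derive confluence formally from the diamond property established above, by the standard ``tiling'' argument of abstract rewriting; the essential mathematical content is already contained in the diamond property, so what remains is a routine diagram chase. Throughout, write $\leadsto^=$ for the reflexive closure of $\leadsto$ (zero or one step). I would first record the diamond property in the symmetric form: for all $T$, $U_0$, $U_1$ with $T\leadsto U_0$ and $T\leadsto U_1$ there is a $V$ with $U_0\leadsto^= V$ and $U_1\leadsto^= V$. When $U_0\neq U_1$ this is precisely the previous proposition; when $U_0=U_1$ it holds trivially with $V=U_0$. Packaging the trivial case into the reflexive closure is what lets the induction go through uniformly.

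Next I would prove a strip lemma by induction on the length of a reduction sequence: if $T\leadsto U$ (one step) and $T\leadsto^* W$, then there exists $V$ with $U\leadsto^* V$ and $W\leadsto^= V$. The base case (empty reduction $T=W$) takes $V=U$. For the inductive step, I factor the reduction as $T\leadsto W_1\leadsto^* W$, apply the diamond property to the two one-step reducts $U$ and $W_1$ of $T$ to obtain a common $\leadsto^=$-reduct $V_1$, then invoke the induction hypothesis on the shorter reduction $W_1\leadsto^* W$ (with one-step reduct $V_1$) to close the diagram at some $V$, and compose $U\leadsto^= V_1\leadsto^* V$.

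Finally I would establish confluence proper by a second induction, this time on the length of $T\leadsto^* U_0$. The empty case is immediate; otherwise I factor as $T\leadsto T_1\leadsto^* U_0$, apply the strip lemma to the single step $T\leadsto T_1$ and the reduction $T\leadsto^* U_1$ to obtain $V_1$ with $T_1\leadsto^* V_1$ and $U_1\leadsto^= V_1$, and then apply the induction hypothesis to the two reductions $T_1\leadsto^* U_0$ and $T_1\leadsto^* V_1$. I do not expect a genuine obstacle here: the only subtlety is the bookkeeping of the possibly-empty $\leadsto^=$ steps arising from the $U_0=U_1$ case of the diamond property, which is absorbed by working throughout with reflexive closures. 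As an alternative route, I would note that the diamond property trivially implies local confluence, so termination (the preceding proposition) together with Newman's Lemma also yields confluence; this is in fact the argument that will be needed for conflict nets later, where the diamond property fails but termination and local confluence persist.
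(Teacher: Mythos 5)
Your argument is correct, but it takes a genuinely different route from the paper. The paper's proof is a one-liner: the diamond property gives local confluence, the preceding proposition gives termination, and Newman's lemma does the rest. You instead derive confluence directly from the diamond property by the strip-lemma tiling argument, packaging the degenerate case $U_0=U_1$ into the reflexive closure $\leadsto^=$ and running two nested inductions on reduction lengths; this is the standard proof that subcommutativity implies the Church--Rosser property, and your bookkeeping of the possibly-empty steps is handled correctly (in the inductive step of the strip lemma, when the common reduct $V_1$ coincides with $W_1$ one simply closes the diagram at $W$ itself). What your route buys is independence from termination: confluence of circuit erasure follows from the diamond property alone, and your argument would survive even in a non-terminating setting. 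What the paper's route buys is brevity and uniformity: the Newman-style argument is reused verbatim for erasure of conflict nets in Section~\ref{sec-erasure}, where the diamond property fails (as the paper explicitly warns) but local confluence and termination persist --- a point you correctly anticipate in your closing remark. Either proof is acceptable here; only the Newman route generalises to the later setting.
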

\begin{proof}
Cut elimination is locally confluent (since it has the diamond
property) and is terminating, so confluence follows from Newman's
lemma \cite{New42}.
\end{proof}
Thus every set of circuits has a unique $\leadsto$-normal form.
A set of circuits $S$ is \defn{erasable} if its normal form is empty,
\ie, if $S\leadsto^*\emptyset$.  
A circuit $X$ is erasable if $\{X\}$ is erasable.
\begin{definition}
A \defn{boxless net} is an erasable circuit.
\end{definition}
Figure~\ref{fig-boxless-net} depicts an example of a boxless net $X$.
An erasure sequence for $X$ is illustrated in Figure~\ref{fig-circuit-erasure}.
\begin{figure}
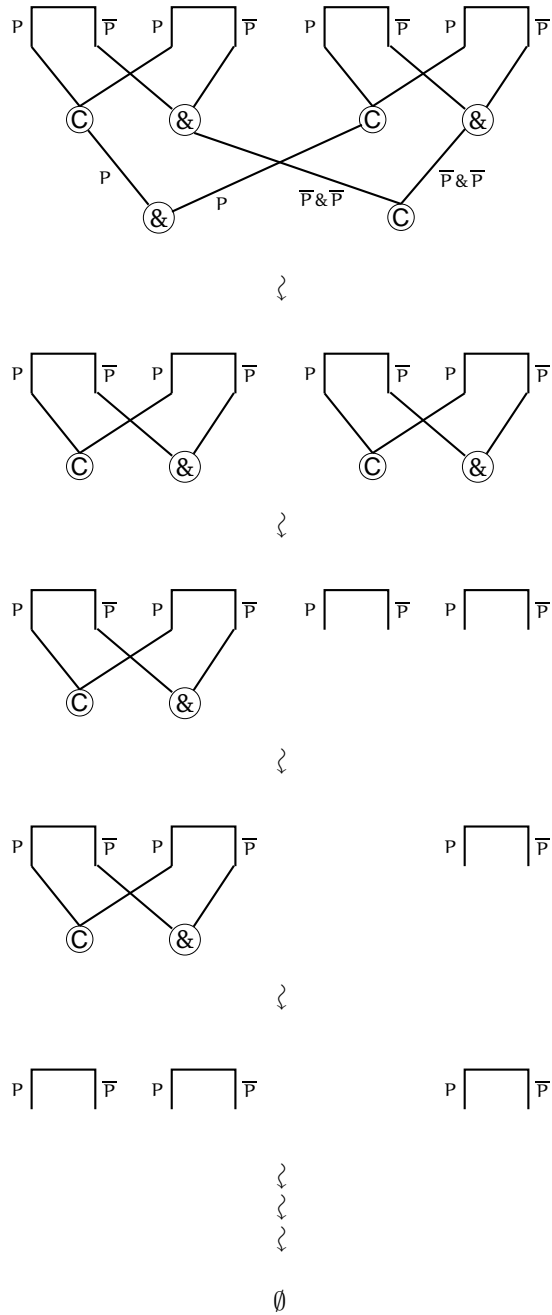

\begin{center}\vspace{2ex}
\begin{math}
\psset{nodesep=1pt}
\newcommand{\litgap}{\hspace*{5ex}}
\newcommand{\linkgap}{\hspace*{6ex}}
\newcommand{\midgap}{\hspace*{6ex}}
\newcommand{\botshift}{\hspace*{4ex}}
\newcommand{\withOneNode}{\pscirclebox[framesep=0pt,linewidth=.1pt]{\withOne}}
\newcommand{\withTwoNode}{\pscirclebox[framesep=0pt,linewidth=.1pt]{\withOne}}
\begin{array}{c@{\hspace*{7ex}}c}
\pnode{leafOneATop}
\litgap
\pnode{leafThreeATop}
\linkgap
\pnode{leafTwoBTop}
\litgap
\pnode{leafThreeBTop}
&
\pnode{leafOneCTop}
\litgap
\pnode{leafFourCTop}
\linkgap
\pnode{leafTwoDTop}
\litgap
\pnode{leafFourDTop}
\psset{nodesep=-1pt}
\ncbar[angle=90,arm=15pt]{leafOneATop}{leafThreeATop}
\wirea{.5}{\leafOne}
\wirea{2.5}{\leafThree}
\ncbar[angle=90,arm=15pt]{leafTwoBTop}{leafThreeBTop}
\wirea{.5}{\leafOne}
\wirea{2.5}{\leafFour}
\ncbar[angle=90,arm=15pt]{leafOneCTop}{leafFourCTop}
\wirea{.5}{\leafTwo}
\wirea{2.5}{\leafThree}
\ncbar[angle=90,arm=15pt]{leafTwoDTop}{leafFourDTop}
\wirea{.5}{\leafTwo}
\wirea{2.5}{\leafFour}
\\[4ex]
\Rnode{leafOneAMid}{\Rnode{withOneAMid}{\Rnode{leafTwoBMid}{\contractionnodeanon}}}
\midgap
\Rnode{leafThreeABMid}{\withTwoNode}
&
\Rnode{leafOneCMid}{\Rnode{withOneCMid}{\Rnode{leafTwoDMid}{\contractionnodeanon}}}
\midgap
\Rnode{leafFourCDMid}{\withTwoNode}
\psset{nodesepB=-1pt}
\ncdiag[angleA=-90,angleB=90,arm=0,nodesepB=0pt]{leafOneATop}{leafOneAMid}
\ncline{leafThreeATop}{leafThreeABMid}
\ncdiag[angleA=-90,angleB=90,arm=0,nodesepB=0pt]{leafTwoBTop}{leafTwoBMid}
\ncline{leafThreeBTop}{leafThreeABMid}
\ncdiag[angleA=-90,angleB=90,arm=0,nodesepB=0pt]{leafOneCTop}{leafOneCMid}
\ncline{leafFourCTop}{leafFourCDMid}
\ncdiag[angleA=-90,angleB=90,arm=0,nodesepB=0pt]{leafTwoDTop}{leafTwoDMid}
\ncline{leafFourDTop}{leafFourCDMid}
\\[5ex]
\botshift
\Rnode{leafOneBot}{\Rnode{withOneBot}{\Rnode{leafTwoBot}{\withOneNode}}}
\nput[labelsep=-7ex]{90}{leafOneBot}{\pnode{conc1}}%
&
\Rnode{leafThreeBot}{\Rnode{withTwoBot}{\Rnode{leafFourBot}{\contractionnodeanon}}}
\nput[labelsep=-7ex]{90}{leafThreeBot}{\pnode{conc2}}%
\botshift
\psset{nodesep=-1pt}
\ncline{withOneAMid}{withOneBot}
\wireb{.5}{\leafOne}
\ncline{withOneCMid}{withOneBot}
\wirea{.75}{\!\!\leafTwo}
\ncdiag[angleA=-55,angleB=90,arm=0,nodesepB=0pt,nodesepA=-1pt]{leafThreeABMid}{leafThreeBot}
\wireb{1.65}{\leafThree\with\leafFour\!\!}
\ncdiag[angleA=-145,angleB=90,arm=0,nodesepB=0pt,nodesepA=-1pt]{leafFourCDMid}{leafFourBot}
\wirea{1.5}{\leafThree\with\leafFour}
\end{array}
\end{math}

\vspace*{4ex}
\leadsdownto
\vspace*{6ex}

\begin{math}
\psset{nodesep=1pt}
\newcommand{\litgap}{\hspace*{5ex}}
\newcommand{\linkgap}{\hspace*{6ex}}
\newcommand{\midgap}{\hspace*{6ex}}
\newcommand{\botshift}{\hspace*{4ex}}
\newcommand{\withOneNode}{\pscirclebox[framesep=0pt,linewidth=.1pt]{\withOne}}
\newcommand{\withTwoNode}{\pscirclebox[framesep=0pt,linewidth=.1pt]{\withOne}}
\begin{array}{c@{\hspace*{7ex}}c}
\pnode{leafOneATop}
\litgap
\pnode{leafThreeATop}
\linkgap
\pnode{leafTwoBTop}
\litgap
\pnode{leafThreeBTop}
&
\pnode{leafOneCTop}
\litgap
\pnode{leafFourCTop}
\linkgap
\pnode{leafTwoDTop}
\litgap
\pnode{leafFourDTop}
\psset{nodesep=-1pt}
\ncbar[angle=90,arm=15pt]{leafOneATop}{leafThreeATop}
\wirea{.5}{\leafOne}
\wirea{2.5}{\leafThree}
\ncbar[angle=90,arm=15pt]{leafTwoBTop}{leafThreeBTop}
\wirea{.5}{\leafOne}
\wirea{2.5}{\leafFour}
\ncbar[angle=90,arm=15pt]{leafOneCTop}{leafFourCTop}
\wirea{.5}{\leafTwo}
\wirea{2.5}{\leafThree}
\ncbar[angle=90,arm=15pt]{leafTwoDTop}{leafFourDTop}
\wirea{.5}{\leafTwo}
\wirea{2.5}{\leafFour}
\\[4ex]
\Rnode{leafOneAMid}{\Rnode{withOneAMid}{\Rnode{leafTwoBMid}{\contractionnodeanon}}}
\midgap
\Rnode{leafThreeABMid}{\withTwoNode}
&
\Rnode{leafOneCMid}{\Rnode{withOneCMid}{\Rnode{leafTwoDMid}{\contractionnodeanon}}}
\midgap
\Rnode{leafFourCDMid}{\withTwoNode}
\psset{nodesepB=-1pt}
\ncdiag[angleA=-90,angleB=90,arm=0,nodesepB=0pt]{leafOneATop}{leafOneAMid}
\ncline{leafThreeATop}{leafThreeABMid}
\ncdiag[angleA=-90,angleB=90,arm=0,nodesepB=0pt]{leafTwoBTop}{leafTwoBMid}
\ncline{leafThreeBTop}{leafThreeABMid}
\ncdiag[angleA=-90,angleB=90,arm=0,nodesepB=0pt]{leafOneCTop}{leafOneCMid}
\ncline{leafFourCTop}{leafFourCDMid}
\ncdiag[angleA=-90,angleB=90,arm=0,nodesepB=0pt]{leafTwoDTop}{leafTwoDMid}
\ncline{leafFourDTop}{leafFourCDMid}
\end{array}
\end{math}

\vspace*{3ex}
\leadsdownto
\vspace*{6ex}

\begin{math}
\psset{nodesep=1pt}
\newcommand{\litgap}{\hspace*{5ex}}
\newcommand{\linkgap}{\hspace*{6ex}}
\newcommand{\midgap}{\hspace*{6ex}}
\newcommand{\botshift}{\hspace*{4ex}}
\newcommand{\withOneNode}{\pscirclebox[framesep=0pt,linewidth=.1pt]{\withOne}}
\newcommand{\withTwoNode}{\pscirclebox[framesep=0pt,linewidth=.1pt]{\withOne}}
\begin{array}{c@{\hspace*{7ex}}c}
\pnode{leafOneATop}
\litgap
\pnode{leafThreeATop}
\linkgap
\pnode{leafTwoBTop}
\litgap
\pnode{leafThreeBTop}
&
\pnode{leafOneCTop}
\litgap
\pnode{leafFourCTop}
\linkgap
\pnode{leafTwoDTop}
\litgap
\pnode{leafFourDTop}
\psset{nodesep=-1pt}
\ncbar[angle=90,arm=15pt]{leafOneATop}{leafThreeATop}
\wirea{.5}{\leafOne}
\wirea{2.5}{\leafThree}
\ncbar[angle=90,arm=15pt]{leafTwoBTop}{leafThreeBTop}
\wirea{.5}{\leafOne}
\wirea{2.5}{\leafFour}
\ncbar[angle=90,arm=15pt]{leafOneCTop}{leafFourCTop}
\wirea{.5}{\leafTwo}
\wirea{2.5}{\leafThree}
\ncbar[angle=90,arm=15pt]{leafTwoDTop}{leafFourDTop}
\wirea{.5}{\leafTwo}
\wirea{2.5}{\leafFour}
\\[4ex]
\Rnode{leafOneAMid}{\Rnode{withOneAMid}{\Rnode{leafTwoBMid}{\contractionnodeanon}}}
\midgap
\Rnode{leafThreeABMid}{\withTwoNode}
\psset{nodesepB=-1pt}
\ncdiag[angleA=-90,angleB=90,arm=0,nodesepB=0pt]{leafOneATop}{leafOneAMid}
\ncline{leafThreeATop}{leafThreeABMid}
\ncdiag[angleA=-90,angleB=90,arm=0,nodesepB=0pt]{leafTwoBTop}{leafTwoBMid}
\ncline{leafThreeBTop}{leafThreeABMid}
\end{array}
\end{math}

\vspace*{3ex}
\leadsdownto
\vspace*{6ex}

\begin{math}
\psset{nodesep=1pt}
\newcommand{\litgap}{\hspace*{5ex}}
\newcommand{\linkgap}{\hspace*{6ex}}
\newcommand{\midgap}{\hspace*{6ex}}
\newcommand{\botshift}{\hspace*{4ex}}
\newcommand{\withOneNode}{\pscirclebox[framesep=0pt,linewidth=.1pt]{\withOne}}
\newcommand{\withTwoNode}{\pscirclebox[framesep=0pt,linewidth=.1pt]{\withOne}}
\begin{array}{c@{\hspace*{7ex}}c}
\pnode{leafOneATop}
\litgap
\pnode{leafThreeATop}
\linkgap
\pnode{leafTwoBTop}
\litgap
\pnode{leafThreeBTop}
&
\pnode{leafOneCTop}
\litgap
\pnode{leafFourCTop}
\linkgap
\pnode{leafTwoDTop}
\litgap
\pnode{leafFourDTop}
\psset{nodesep=-1pt}
\ncbar[angle=90,arm=15pt]{leafOneATop}{leafThreeATop}
\wirea{.5}{\leafOne}
\wirea{2.5}{\leafThree}
\ncbar[angle=90,arm=15pt]{leafTwoBTop}{leafThreeBTop}
\wirea{.5}{\leafOne}
\wirea{2.5}{\leafFour}
\ncbar[angle=90,arm=15pt]{leafTwoDTop}{leafFourDTop}
\wirea{.5}{\leafTwo}
\wirea{2.5}{\leafFour}
\\[4ex]
\Rnode{leafOneAMid}{\Rnode{withOneAMid}{\Rnode{leafTwoBMid}{\contractionnodeanon}}}
\midgap
\Rnode{leafThreeABMid}{\withTwoNode}
\psset{nodesepB=-1pt}
\ncdiag[angleA=-90,angleB=90,arm=0,nodesepB=0pt]{leafOneATop}{leafOneAMid}
\ncline{leafThreeATop}{leafThreeABMid}
\ncdiag[angleA=-90,angleB=90,arm=0,nodesepB=0pt]{leafTwoBTop}{leafTwoBMid}
\ncline{leafThreeBTop}{leafThreeABMid}
\end{array}
\end{math}

\vspace*{3ex}
\leadsdownto
\vspace*{6ex}

\begin{math}
\psset{nodesep=1pt}
\newcommand{\litgap}{\hspace*{5ex}}
\newcommand{\linkgap}{\hspace*{6ex}}
\newcommand{\midgap}{\hspace*{6ex}}
\newcommand{\botshift}{\hspace*{4ex}}
\newcommand{\withOneNode}{\pscirclebox[framesep=0pt,linewidth=.1pt]{\withOne}}
\newcommand{\withTwoNode}{\pscirclebox[framesep=0pt,linewidth=.1pt]{\withOne}}
\begin{array}{c@{\hspace*{7ex}}c}
\pnode{leafOneATop}
\litgap
\pnode{leafThreeATop}
\linkgap
\pnode{leafTwoBTop}
\litgap
\pnode{leafThreeBTop}
&
\pnode{leafOneCTop}
\litgap
\pnode{leafFourCTop}
\linkgap
\pnode{leafTwoDTop}
\litgap
\pnode{leafFourDTop}
\psset{nodesep=-1pt}
\ncbar[angle=90,arm=15pt]{leafOneATop}{leafThreeATop}
\wirea{.5}{\leafOne}
\wirea{2.5}{\leafThree}
\ncbar[angle=90,arm=15pt]{leafTwoBTop}{leafThreeBTop}
\wirea{.5}{\leafOne}
\wirea{2.5}{\leafFour}
\ncbar[angle=90,arm=15pt]{leafTwoDTop}{leafFourDTop}
\wirea{.5}{\leafTwo}
\wirea{2.5}{\leafFour}
\end{array}
\end{math}

\vspace*{3ex}
\leadsdownto

\leadsdownto

\leadsdownto
\vspace*{3ex}

$\emptyset$

\end{center}
\caption{\label{fig-circuit-erasure}An erasure sequence.  
To save space, we leave exit wires from final $\with$ and $\csym$
nodes implied.}
\end{figure}
Note that, by the diamond property, any erasure sequence from $X$ to
$\emptyset$ has the same number of steps: the number of
non-contraction nodes in $X$.

\subsection{P-time correctness}

The following theorem distinguishes erasability from mere
sequentializability.
\begin{theorem}
Erasability of a circuit $X$ can be checked in p-time in the number of
nodes in $X$.
\end{theorem}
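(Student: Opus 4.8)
The plan is to exploit the confluence and termination of $\leadsto$, already established above, so as to reduce the check to computing a \emph{single} normal form by an arbitrary greedy strategy, rather than searching over all erasure sequences. By definition $X$ is erasable iff $\{X\}\leadsto^*\emptyset$, that is, iff the $\leadsto$-normal form of $\{X\}$ is the empty set. Since $\leadsto$ is confluent and terminating, this normal form is unique and is reached by \emph{any} maximal reduction sequence. Hence I need not backtrack or explore the branching among redex choices: I may fix any deterministic procedure that repeatedly locates an arbitrary ready redex and fires it, halting when no ready redex remains, and then simply test whether the resulting set of circuits is empty.

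First I would bound the number of reduction steps. By the termination proposition each step strictly decreases the total number of nodes in the disjoint union of the current circuits; indeed, as remarked after the definition of boxless net, every sequence from $X$ to $\emptyset$ has exactly as many steps as there are non-contraction nodes in $X$. Either way, termination alone guarantees the step count is at most the number $n$ of nodes in $X$, linear in the input. Note also that the number of wires is linear in $n$, since each node is incident to a bounded number of wires (two or three), so the whole circuit has size polynomial in $n$.

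Next I would bound the cost of one step, namely (i) finding a ready redex and (ii) performing the deletion. Only the final nodes (sources of exit wires) need be inspected, and for each the readiness test is a polynomial graph property: a $\plus$-node is always ready; a $\tensor$- or cut-node is ready iff deleting it disconnects the circuit, a single connectivity query; a $\parr$-node is ready iff deleting it does \emph{not} disconnect it; and a $\with$-node is ready iff all other final nodes are contraction nodes, deleting all final nodes yields exactly two components, and each final node has one incoming wire in each component. Every one of these conditions reduces to computing connected components after a bounded batch of vertex deletions, which is linear (or near-linear) in the size of the circuit, hence polynomial in $n$. Performing the reduction is a further connected-components computation. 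So each step costs polynomial time, and with at most $n$ steps the total is polynomial in $n$.

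The crux that makes the theorem non-trivial is exactly that a naive definition-chasing approach, trying all erasure sequences, is exponential owing to the branching choice of redex at each stage. The whole weight of the argument therefore rests on the previously established confluence (obtained via Newman's lemma): it is what licenses the backtrack-free greedy strategy, collapsing the search to a single linear chain of polynomial-cost steps. The only remaining care is bookkeeping, chiefly confirming that the $\with$-readiness condition --- the most elaborate case --- is genuinely a polynomial graph computation (a constant number of connectivity queries on the final nodes) and not something demanding a global search over resolutions.
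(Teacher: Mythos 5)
Your proposal is correct and follows essentially the same route as the paper's proof: use the diamond property/confluence to justify a backtrack-free greedy strategy, bound the number of steps by the (non-contraction) node count via the termination measure, and observe that each readiness check reduces to polynomial-time connectivity queries. No substantive differences.
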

\begin{proof}
Let $k$ be the number of nodes in $X$, and $n$ the number of
non-contraction nodes.  Since each erasure step deletes a
non-contraction node, the $\leadsto$-normal form of $\{X\}$ is
obtained in at most $n$ steps.
By the diamond property, any ready node $N$ suffices at each step.
To find such an $N$ requires checking at most $n$ nodes for readiness,
and the complexity of checking if a node is ready is at worst the
complexity of checking disconnectedness of a graph $G$ into two
connected components, where $G$ has at most $k$ vertices.
\end{proof}

\subsection{Translation function from proofs to circuits}

The obvious translation via box nets was outlined at the beginning of
the section: simply forget to draw the boxes.
For the sake of complete rigour, we give below a direct formal
translation of a proof $\Pi$ to a circuit $X$, by induction on the
number of rules in $\Pi$.
\begin{itemize}
\item \emph{Base case.} $\Pi$ is an axiom with conlusion $P,\dual P$.  $X$ is an axiom-node $N$ two exit wires, labelled $P$ and $\dual P$, 
in that order.
\item \emph{Induction step.}  Let $\rho$ be the last rule of $\Pi$, and $\Gamma$ its conclusion.
\begin{itemize}
\item 
\emph{Unary case.} 
$\rho$ has one hypothesis sequent $\Delta$ above its line, which
concludes the subproof $\Pi'$ of $\Pi$.  Let $X'$ be the circuit
obtained from $\Pi'$.
\begin{itemize}
\item $\rho=\permlabel_\sigma$.
Define $X$ from $X'$ by applying the permutation $\sigma$ to the
ordering of the exit wires (viewing the ordering as an enumeration
from $1$).
\item
$\rho=\parr$, so $\Delta=\Delta',A,B$ and $\Gamma=\Delta',A\parr B$.
Define $X$ from $X'$ as follows: add a new $\parr$-node $N$ as the
target of the last two exit wires of $X'$ (the last wire being
designated \emph{right} for $N$); add to $N$ a new exit wire $w$
labelled $A\parr B$; place $w$ in last position in the exit wire
order.
\item $\rho=\plus_i$, so $\Delta=\Delta',A_i$ and $\Gamma=\Delta',A_0\plus A_1$.  Define $X$ from $X'$ as follows: add
a new $\plus$-node $N$ as the target of the last wire $v$ of $X'$, and
designate $v$ as left or right according to $i=0$ or $1$; add to $N$ a
new exit wire $w$ labelled $A_0\parr A_1$; place place $w$ in last
position in the exit wire order.
\end{itemize}
\item \emph{Binary case.}
$\rho$ has two hypotheses $\Delta_0$ and $\Delta_1$, which conclude
subproofs $\Pi_0$ and $\Pi_1$ of $\Pi$, respectively.  Let $X_i$ be
the circuit obtained from $\Pi_i$.
\begin{itemize}
\item $\rho=\tensor$, so $\Delta_0=\Delta'_0,A$ and $\Delta_1=B,\Delta_1'$.  Define $X$ from the disjoint union 
of $X_0$ and $X_1$ as follows: add a new $\tensor$-node $N$ as the
target of the last wire $v_0$ of $X_0$ and the first wire $v_1$ of
$X_1$; designate $v_0$ as left for $N$ and $v_1$ as right; add to $N$
a new exit wire $w$ labelled $A\tensor B$; impose the following order
on exit wires: all the exit wires of $X_0$ in their original order
(except $v_0$, which is no longer an exit), then $w$, then all the
exit wires of $X_1$ in their original order (except $v_1$, which is no
longer an exit).
\item $\rho=\with$, so $\Delta_i=\Delta',A_i$.  Let $\Delta'=B_1,\ldots,B_n$.  Define $X$ from the disjoint union 
of $X_0$ and $X_1$ as follows: add a new $\with$-node $N$ as the
target of the last wire $v_0$ of $X_0$ and the last wire $v_1$ of
$X_1$; designate $v_0$ as left for $N$ and $v_1$ as right; add to $N$
a new exit wire $w$ labelled $A\tensor B$; for $j=1,\ldots,n$ add a
new contraction-node $N_j$ as the target of the $j\nth$ wire of $X_0$
and the $j\nth$ wire of $X_1$; add to $N_j$ a new exit wire $w_j$
labelled $B_j$; impose the following order on exit wires:
$w_1,\ldots,w_n,w$.
\end{itemize}
\end{itemize}
\end{itemize}
\begin{proposition}\label{prop-circuit-soundness}
The above translation maps every proof to an erasable circuit.
\end{proposition}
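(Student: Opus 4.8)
The plan is to induct on the number of rules in $\Pi$, showing that the node (or nodes) introduced by the final rule of $\Pi$ form a ready redex whose erasure returns precisely the circuit(s) associated to the immediate subproof(s). Because erasure is confluent and erasability is defined via the (unique) normal form, it suffices to exhibit one reduction sequence $\{X\}\leadsto^*\emptyset$; so at each stage I am free to erase the final node(s) coming from the last rule first, peeling the circuit apart from the bottom up in exact mirror of how the proof was built.

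For the base case, an axiom translates to a single axiom-node with two exit wires; being the unique node of its circuit it is ready, and erasing it yields $\emptyset$. For the inductive step I would case on the last rule $\rho$. If $\rho$ is $\permlabel_\sigma$ the circuit is unchanged as a graph (only the exit order is permuted), and since readiness is insensitive to the exit order, erasability follows directly from the inductive hypothesis. If $\rho$ is a $\plus$, its node is final and a $\plus$-node is always ready; erasing it returns the subproof circuit $X'$, erasable by induction. If $\rho$ is $\parr$, the new node sits atop two exits of the connected circuit $X'$, so deleting it cannot disconnect the circuit, making it ready; erasure again returns $X'$. If $\rho$ is $\tensor$, the new node is the sole link between the disjoint connected circuits $X_0$ and $X_1$, so deleting it disconnects the circuit into exactly those two connected components; the node is therefore ready and erasure yields $\{X_0,X_1\}$.

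The delicate case, and the one I expect to be the main obstacle, is $\with$. Here I must verify all three clauses of $\with$-readiness for the new $\with$-node $N$. First, every final node of $X$ is one of $N$ or the contraction-nodes $N_1,\dots,N_n$ built for the shared context $\Delta'=B_1,\dots,B_n$, since the construction consumes every former exit of $X_0$ and $X_1$ as an incoming wire of one of these nodes; so every final node other than $N$ is a $\csym$-node. Second, deleting all these final nodes and their exit wires severs the only links between the $X_0$-part and the $X_1$-part, leaving exactly the two connected components $X_0$ and $X_1$ (up to the canonically induced exit order). Third, each of $N,N_1,\dots,N_n$ was built with one incoming wire from $X_0$ and the other from $X_1$, so the incoming-wire clause holds. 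Hence $N$ is ready and its erasure returns $\{X_0,X_1\}$.

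Finally I would close the induction using that erasability is compositional over disjoint unions of circuit-sets: since readiness of a node depends only on the circuit containing it, an erasure sequence for $X_0$ proceeds undisturbed inside $\{X_0,X_1\}$, after which one erases $X_1$; thus $\{X_0\}\leadsto^*\emptyset$ and $\{X_1\}\leadsto^*\emptyset$ give $\{X_0,X_1\}\leadsto^*\emptyset$, and likewise $\{X'\}\leadsto^*\emptyset$ settles the unary cases. The only real care needed is bookkeeping the exit orders so that the components produced by erasure are genuinely (isomorphic to) the subproof circuits $X_0,X_1,X'$; everything else is immediate from the readiness definitions.
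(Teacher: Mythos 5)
Your proof is correct and follows essentially the same route as the paper's: induction on the number of rules, checking that the node introduced by the last rule is ready so that the first erasure step recovers the subproof circuit(s), which are erasable by the induction hypothesis. The extra detail you supply --- explicitly verifying the three clauses of $\with$-readiness and the compositionality of erasability over disjoint unions --- is exactly what the paper leaves implicit in its one-line ``by construction'' cases.
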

\begin{proof}
By induction on the number of rules in the proof $\Pi$.
We reference each case in the translation above:
\begin{itemize}
\item \emph{Base case.}
$X$ is erasable in one step: $\{X\}\leadsto_{X,N}\emptyset$.
\item \emph{Induction step.}
\begin{itemize}
\item $\rho=\permlabel_\sigma$.
The circuits $X$ and $X'$ differ only in the order on their exit
wires.  Since node readiness is independent of exit wire order,
$X$ is erasable by the same sequence of erasures as $X'$.
\item $\rho=\parr$ or $\plus$.
$\{X\}\leadsto_{X,N} \{X'\}$ by construction, and $X'$ is erasable.
\item $\rho=\tensor$ or $\plus$.
$\{X\}\leadsto_{X,N} \{X_1,X_2\}$ by construction, and each $X_i$ is
erasable.  Thu $X$ is erasable by (arbitrarily) interleaving erasure
sequences of $X_1$ and $X_2$ after $\{X\}\leadsto_{X,N}\{X_1,X_2\}$.
\end{itemize}
\end{itemize}\vspace*{-4ex}
\end{proof}
A circuit $X$ is \defn{sequentializable} if it is the translation of a
proof; any such proof is a \defn{sequentialization} of $X$.

\subsection{Sequentialization}

\begin{theorem}[Sequentialization]\label{thm-boxless-seq}
A circuit is erasable iff it is sequentializable.
\end{theorem}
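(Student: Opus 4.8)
The plan is to prove the two implications separately. The direction ``sequentializable $\Rightarrow$ erasable'' is already in hand: it is exactly Proposition~\ref{prop-circuit-soundness}, which shows that the translation of any proof is an erasable circuit. So all the real work lies in the converse, ``erasable $\Rightarrow$ sequentializable'', which is the genuine sequentialization statement, and this is what distinguishes erasability from mere structural well-formedness.

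For the converse I would argue by induction on the number of nodes of the erasable circuit $X$. Since $X$ is erasable it admits at least one ready node; pick any ready redex $N$ and let $\{X\}\leadsto_N S$. The first thing to verify is that \emph{every} circuit in $S$ is again erasable. This follows from confluence: the preceding propositions give $\leadsto$ a unique normal form, which for an erasable circuit is $\emptyset$, so from $\{X\}\leadsto S$ and $\{X\}\leadsto^*\emptyset$ we get $S\leadsto^*\emptyset$; and since $\leadsto$ acts on one member circuit of a set at a time, a set reduces to $\emptyset$ if and only if each of its members does. Hence each circuit of $S$ is erasable, and by the induction hypothesis each is sequentializable.

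Next I would do a case analysis on the type of the redex $N$, matching it to the last rule of the proof to be built, and in each case check that applying the translation of that rule to the sequentializations of the components reproduces $X$ exactly (up to the suppressed $\permlabel$-rules that reconcile the canonically induced exit orders). The axiom case is the base case: a ready axiom node is the unique node of $X$, so $X$ is a single axiom and is the $\axlabel$-translation. If $N$ is a $\plus$- or $\parr$-node then $S=\{X'\}$, and the incoming wire(s) of $N$ reappear as exit wires of $X'$ carrying the immediate subformula(s); reading off the labels exhibits $X$ as the $\plus_i$- or $\parr$-translation of a sequentialization of $X'$. If $N$ is a $\tensor$-node, readiness guarantees that deleting $N$ splits $X$ into exactly two components $X_1,X_2$ with conclusions $\Gamma,A$ and $B,\Delta$, so $X$ is the $\tensor$-translation of sequentializations of $X_1$ and $X_2$.

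The hard part will be the $\with$-case, and it is worth isolating. Here readiness asserts that every other final node is a contraction-node, that deleting all final nodes yields exactly two components $X_1,X_2$, and that every final node has one incoming wire in $X_1$ and the other in $X_2$. I would use precisely this last condition to show that $X_1$ and $X_2$ have conclusions $\Gamma,A$ and $\Gamma,B$ sharing a common context $\Gamma=B_1,\dots,B_n$: the $\with$-node contributes the distinguished arguments $A$ and $B$, while each contraction-node $N_j$ splits into a wire labelled $B_j$ in $X_1$ and a matching wire labelled $B_j$ in $X_2$, forcing the two contexts to coincide formula-by-formula. Verifying that the contraction nodes pair up the context correctly, and that the induced exit orders on $X_1$ and $X_2$ align (again absorbing any discrepancy into $\permlabel$), is the delicate book-keeping; once it is done, $X$ is exactly the $\with$-translation of sequentializations of $X_1$ and $X_2$, which closes the induction and hence the proof.
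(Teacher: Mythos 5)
Your proposal is correct and follows essentially the same route as the paper: right-to-left via Proposition~\ref{prop-circuit-soundness}, and left-to-right by induction (the paper inducts on the length of an erasure sequence and takes the first redex, where you induct on node count, take an arbitrary ready node, and invoke confluence to see the components stay erasable --- a minor variation that if anything makes the appeal to the induction hypothesis more explicit). The case analysis on the redex type, including the observation that in the $\with$-case the contraction nodes force the two components to share a common context $\Gamma$ up to suppressed $\permlabel$-rules, matches the paper's argument.
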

\begin{proof}
The right-to-left implication is Proposition~\ref{prop-circuit-soundness}.

Let $X$ be an erasable circuit, with $n$-step erasure sequence to
$\emptyset$.  We prove $X$ sequentializable by induction on $n$ (which
is the same for all erasure sequences to $\emptyset$, by the diamond
property).
\begin{itemize}
\item
\emph{Base case.}
$n=1$. $X$ is the translation of an axiom rule.
\item
\emph{Inductive step.}
$n>1$.  Let $N$ be the ready node deleted from $X$ in the first
erasure step.  
Let $v_1,\ldots,v_n$ be the exit wires of $X$, in order, and let $C_i$
be the formula of $v_i$.
Suppose $v_k$ be the exit wire of $N$ ($1\le k\le n$) and let
$\Gamma_1=C_1,\ldots,C_{k-1}$ and $\Gamma_2=C_{k+1}\ldots C_n$.
We split into subcases according to the type of $N$.
\begin{itemize}
\item
\emph{Unary case.} $N$ is a $\parr$ or $\plus$.  Thus $\{X\}\leadsto_{X,N} \{Y\}$ is the
first erasure step.  By induction hypothesis, a proof $\Pi$ translates
to $Y$. 
\begin{itemize}
\item $N$ is a $\parr$.
Let $A$ be the formula of the left incoming
wire of $N$, and $B$ the formula of the right incoming
wire.  
The following proof translates to $X$:
\begin{center}
\vspace{1ex}
\begin{prooftree}\thickness=.08em
\[
  \Pi
  \justifies
  \Gamma_1,A,B,\Gamma_2
\]
\justifies
\Gamma_1,A\parr B,\Gamma_2
\using 
\parlabel
\end{prooftree}
\vspace{1ex}
\end{center}
(Permutation rules are supressed; see Section~\ref{sec-mall}.)
\item $N$ is a $\plus$.
Thus the formula $C_k$ of $N's$ exit wire $v_k$ is $A_0\plus A_1$.
The following proof translates to $X$, where $j=0/1$ according as the incoming wire of $N$ is designated left/right.
\begin{center}
\vspace{1ex}
\begin{prooftree}\thickness=.08em
\[
  \Pi
  \justifies
  \Gamma_1\;,\;A_i\;,\;\Gamma_2
\]
\justifies
\Gamma_1,A_0\plus A_1,\Gamma_2
\using 
\pluslabel{i}
\end{prooftree}
\vspace{1ex}
\end{center}
\end{itemize}
\item
\emph{Binary case.}
$N$ is a $\tensor$ or $\with$.  Thus $\{X\}\leadsto_{X,N}
\{Y_0,Y_1\}$ is the first erasure step.  By induction hypothesis, proofs
$\Pi_i$ translate to $Y_i$.
Let $u_0$ be the left incoming wire of $N$, labelled $A_0$, and $u_1$
its right incoming wire, labelled $A_1$.
\begin{itemize}
\item
$N$ is a $\tensor$. Thus $C_k=A_0\tensor A_1$.
The conclusion of $\Pi_i$ is $\Delta_i,A_i,\Delta'_i$.
The following proof translates to $X$:
\begin{center}
\vspace{1ex}
\begin{prooftree}\thickness=.08em
\[
  \[
    \[
      \Pi_0
      \justifies
      \Delta_0,A_0,\Delta_0'
    \]
    \justifies
    \Delta_0,\Delta_0',A_0
    \using\permlabel
  \]
  \hspace*{6ex}
  \[ 
    \[
      \Pi_1
      \justifies
      \Delta_1,A_1,\Delta_1'
    \]
    \justifies
    A_1,\Delta_1,\Delta_1'
    \using\permlabel
  \]  
  \justifies
  \Delta_0,\Delta'_0,A_0\tensor A_1,\Delta_1,\Delta_1'
  \using\tensorlabel
\]
\justifies
\Gamma_1,A_0\tensor A_1,\Gamma_2
\using 
\permlabel
\end{prooftree}
\vspace{1ex}
\end{center}
The permutations are determined by the fact that the exit wires of
$Y_0$ and $Y_1$ apart from $u_0$ and $u_1$ are exactly the exit wires
of $X$ apart from $w_k$.
\item
$N$ is a $\with$. Thus $C_k=A_0\with A_1$.
The conclusion of $\Pi_i$ is $\Gamma_1,A_i,\Gamma_2$.
The following proof translates to $X$:
\begin{center}
\vspace{1ex}
\begin{prooftree}\thickness=.08em
\[
  \[
    \[
      \Pi_0
      \justifies
      \Gamma_1,A_0,\Gamma_2
    \]
    \justifies
    \Gamma_1,\Gamma_2,A_1
    \using\permlabel
  \]
  \hspace*{6ex}
  \[ 
    \[
      \Pi_1
      \justifies
      \Gamma_1,A_1,\Gamma_2
    \]
    \justifies
    \Gamma_1,\Gamma_2,A_1
    \using\permlabel
  \]  
  \justifies
  \Gamma_1,\Gamma_2,A_0\with A_1
  \using\withlabel
\]
\justifies
\Gamma_1,A_0\with A_1,\Gamma_2
\using 
\permlabel
\end{prooftree}
\vspace{1ex}
\end{center}
The permutations are determined by the bijections between the exit
wires of each $Y_i$ and the exit wires of $X$.
\end{itemize}
\end{itemize}
\end{itemize}\vspace*{-4ex}
\end{proof}

\subsection{Relationship with contractibility/retractability}

The underlying data structure of a circuit (aside from the order on
exit wires, which is a technical convenience) is the same as that used
by Maieli \cite{Mai07}.
\begin{conjecture}
A circuit is the translation of a proof iff it is retractable with
respect to Maieli's $R_1,\ldots,R_4$ (dropping $R_5$).
\end{conjecture}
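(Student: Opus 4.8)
The plan is to reduce the conjecture to the Sequentialization theorem (Theorem~\ref{thm-boxless-seq}), which already equates \emph{sequentializable} with \emph{erasable}. Since a circuit is the translation of a proof exactly when it is sequentializable, it suffices to prove the purely combinatorial equivalence: a circuit is erasable (reduces to $\emptyset$ under $\leadsto$) if and only if it is retractable under Maieli's $R_1,\ldots,R_4$. The bridge is the observation, noted in the remark preceding the conjecture, that the underlying graph of a circuit---forgetting the linear order on exit wires---coincides with Maieli's data structure. The first step is therefore to make this dictionary precise: match the axiom-, contraction-, $\tensor$-, $\parr$-, $\with$- and $\plus$-nodes of a circuit with the corresponding links in Maieli's structures, and check that the exit order, being invisible to node readiness, plays no role in either rewrite system.

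Granting the dictionary, I would establish the equivalence by mutual simulation between $\leadsto$ and $R_1$--$R_4$-retraction. In one direction, an erasure sequence $\{X\}\leadsto^*\emptyset$ (such as the one furnished by Proposition~\ref{prop-circuit-soundness} when $X$ is a translate) should be realisable, one $\leadsto$-step at a time and possibly via several $R_i$-steps, as a retraction to the trivial structure. In the other direction, any $R_1$--$R_4$-retraction should supply, or be rearrangeable into, an erasure sequence to $\emptyset$, whence $X$ is erasable and hence sequentializable by Theorem~\ref{thm-boxless-seq}. Because $\leadsto$ is strongly normalising and confluent (the diamond property lets one choose redexes freely), it should suffice to match the two \emph{normal-form} conditions rather than to simulate step for step.

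The role of the dropped rule $R_5$ is the conceptual crux, and I would argue it as follows. Maieli's $R_5$ is the distributivity rewrite, which identifies structures that differ by raising a $\tensor$-rule over a $\with$-rule; but Figure~\ref{fig-egs} and its caption show precisely that the circuit (boxless net) translation \emph{distinguishes} such proofs. Consequently retractability must be taken \emph{without} $R_5$ in order to match the translation image exactly: including $R_5$ would collapse circuits that the translation keeps apart, whereas $R_1$--$R_4$ alone should characterise the translates of proofs on the nose.

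The hard part will be the exact matching of the rewrite rules, since they are not literally the same: erasure deletes a single \emph{final} node and, for $\with$- and $\tensor$-redexes, invokes a \emph{global} connectivity condition, whereas Maieli's retractions are \emph{local} graph contractions applicable anywhere. Showing that the two normal-form conditions coincide---in particular that a retraction stuck away from the conclusions corresponds to a circuit with no ready final node, and conversely---requires careful analysis of the $\with$/contraction interaction, where readiness of a final $\with$ demands that every other final node be a contraction-node splitting the two slices. I expect this readiness-versus-locality mismatch to be the principal obstacle, and it is presumably the reason the statement is offered here only as a conjecture.
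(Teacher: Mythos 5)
The statement you are addressing is left as an open \emph{conjecture} in the paper: the author supplies no proof, and your text likewise does not supply one. Your opening reduction is sound and is surely the intended route --- by Theorem~\ref{thm-boxless-seq} ``translation of a proof'' is interchangeable with ``erasable,'' so the conjecture is equivalent to the purely combinatorial claim that erasability under $\leadsto$ coincides with retractability under Maieli's $R_1,\ldots,R_4$. But that equivalence \emph{is} the entire content of the conjecture, and your proposal does not establish it in either direction. The ``mutual simulation'' is only announced: you do not exhibit how a single $\leadsto$-step (deleting one ready final node, with its global side conditions on connectivity and on the $\with$/contraction configuration) decomposes into $R_i$-retractions, nor how an arbitrary $R_1$--$R_4$ normal form can be rearranged into an erasure sequence terminating at $\emptyset$. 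Indeed you explicitly flag the readiness-versus-locality mismatch as ``the principal obstacle'' and leave it unresolved --- which is to say, the hard part of the proof is missing, not merely compressed.

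Two further points. First, the appeal to confluence and strong normalisation of $\leadsto$ lets you choose redexes freely on the \emph{erasure} side, but it does nothing by itself to relate the $\leadsto$-normal form of a non-erasable circuit to being $R_1$--$R_4$-stuck; that comparison of the two failure modes is an independent combinatorial argument you have not given. Second, your discussion of why $R_5$ must be dropped is a plausibility argument from Figure~\ref{fig-egs} (which concerns conflict nets, not circuits), not a proof that $R_5$ would strictly enlarge the retractable class of circuits; even granting it, it only explains why the conjecture is stated as it is, not why it holds. In short: your plan identifies the right target and the right obstacle, but as a proof it has a genuine gap exactly where the paper itself stops and writes ``Conjecture.''
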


\section{Erasure for conflict nets}\label{sec-erasure}

We can draw a linking $\lambda : L\to\Gamma$ as a graph in two
different ways, depending on whether we show conflict $\conflict$ or
adjacency $\girstcoh$.  For example, the linking below is
followed by each of its graphs, the former graph showing conflict
$\conflict$ (dotted), the latter showing adjacency $\girstcoh$
(dashed).
The three links are shown as $\bullet$ vertices.
\begin{center}%
\vspace*{3ex}%
\newcommand{\gap}{\hspace{5ex}}%
\newcommand{\sequentOne}{\begin{math}%
\Rnode{P}{P}
\gap
(\Rnode{P'}{\dual P}
\tensor
\Rnode{Q'}{\dual Q})
\parr
(\Rnode{Q1}{Q}
\with
\Rnode{Q2}{Q})
\end{math}}%
\newcommand{\sequentTwo}{\begin{math}%
\Rnode{P}{P}
\gap
\Rnode{P'}{\dual P}
\tensor
\Rnode{Q'}{\dual Q}
\gap
\Rnode{Q1}{Q}
\with
\Rnode{Q2}{Q}
\end{math}}%
\raisebox{-2ex}{\sequentOne%
\ncbar[angle=90,nodesep=2pt,arm=.4cm]{P}{P'}
\ncbar[angle=90,nodesep=2pt,arm=.2cm,offsetA=-1pt]{Q'}{Q1}
\ncput{\pnode{Q'Q1}}
\ncbar[angle=90,nodesep=2pt,arm=.6cm,offsetA=1pt]{Q'}{Q2}
\ncput{\pnode{Q'Q2}}
\conflictline{Q'Q1}{Q'Q2}}

\vspace*{13ex}

\raisebox{2.5ex}{\begin{math}%
\psset{nodesepA=.5pt,nodesepB=.5pt}%
\Rnode{P}{P}
\gap
\Rnode{P'}{\dual P}
\gap
\Rnode{Q'}{\dual Q}
\gap
\Rnode{Q1}{Q}
\gap
\Rnode{Q2}{Q}
\ncline[linestyle=none]{P'}{Q'}
\nbput[labelsep=.5cm]{\rnode{t}{\tensor}}
\ncline{->}{P'}{t}
\ncline{->}{Q'}{t}
\ncline[linestyle=none]{Q1}{Q2}
\nbput[labelsep=.5cm]{\rnode{w}{\with}}
\ncline{->}{Q1}{w}
\ncline{->}{Q2}{w}
\ncline[linestyle=none]{P}{P'}
\naput[labelsep=.5cm]{\rnode{l}{\bullet}}
\ncline[nodesepA=-1pt]{->}{l}{P}
\ncline[nodesepA=-1pt,nodesepB=2pt]{->}{l}{P'}
\ncline[linestyle=none]{Q'}{Q2}
\naput[npos=0,labelsep=.7cm]{\rnode{m}{\bullet}}
\ncline[nodesepA=-1pt,nodesepB=2pt]{->}{m}{Q'}
\ncline[nodesepA=-1pt]{->}{m}{Q1}
\ncline[linestyle=none]{Q'}{Q2}
\naput[npos=.5,labelsep=1.5cm]{\rnode{n}{\bullet}}
\ncline[nodesepA=-1pt,nodesepB=2pt]{->}{n}{Q'}
\ncline[nodesepA=-1pt]{->}{n}{Q2}
\ncline[linestyle=none]{t}{w}
\nbput[labelsep=.7cm]{\rnode{p}{\parr}}
\ncline{->}{t}{p}
\ncline{->}{w}{p}
\conflictline[nodesep=0pt]{m}{n}
\end{math}}
\hspace*{14ex}
\raisebox{2.5ex}{\begin{math}%
\psset{nodesepA=.5pt,nodesepB=.5pt}%
\Rnode{P}{P}
\gap
\Rnode{P'}{\dual P}
\gap
\Rnode{Q'}{\dual Q}
\gap
\Rnode{Q1}{Q}
\gap
\Rnode{Q2}{Q}
\ncline[linestyle=none]{P'}{Q'}
\nbput[labelsep=.5cm]{\rnode{t}{\tensor}}
\ncline{->}{P'}{t}
\ncline{->}{Q'}{t}
\ncline[linestyle=none]{Q1}{Q2}
\nbput[labelsep=.5cm]{\rnode{w}{\with}}
\ncline{->}{Q1}{w}
\ncline{->}{Q2}{w}
\ncline[linestyle=none]{P}{P'}
\naput[labelsep=.5cm]{\rnode{l}{\bullet}}
\ncline[nodesepA=-1pt]{->}{l}{P}
\ncline[nodesepA=-1pt,nodesepB=2pt]{->}{l}{P'}
\ncline[linestyle=none]{Q'}{Q2}
\naput[npos=0,labelsep=.7cm]{\rnode{m}{\bullet}}
\ncline[nodesepA=-1pt,nodesepB=2pt]{->}{m}{Q'}
\ncline[nodesepA=-1pt]{->}{m}{Q1}
\ncline[linestyle=none]{Q'}{Q2}
\naput[npos=.5,labelsep=1.5cm]{\rnode{n}{\bullet}}
\ncline[nodesepA=-1pt,nodesepB=2pt]{->}{n}{Q'}
\ncline[nodesepA=-1pt]{->}{n}{Q2}
\ncline[linestyle=none]{t}{w}
\nbput[labelsep=.7cm]{\rnode{p}{\parr}}
\ncline{->}{t}{p}
\ncline{->}{w}{p}
\ncline[linestyle=dashed,nodesep=0pt]{l}{m}
\ncline[linestyle=dashed,nodesep=0pt]{l}{n}
\end{math}}
\vspace*{6.5ex}
\end{center}
We shall write $\conflictgraph\lambda$ for the left graph, and $\adjacencygraph\lambda$ for the right graph.
Formally,
\begin{eqnarray*}
\conflictgraph\lambda
& \;\;=\;\; & 
\Gamma \;\;\cup\;\; \conflictgraph L \;\;\cup\;\; \lambda
\\
\adjacencygraph\lambda
& \;\;=\;\; & 
\Gamma \;\;\cup\;\; \adjacencygraph L \;\;\cup\;\; \lambda
\end{eqnarray*}
where $L^{\conflict}$ (resp.\ $\adjacencygraph{L}$) denotes the
undirected graph on the links of $L$ given by conflict (resp.\
adjacency), and (without loss of generality) we assume $\Gamma$ and
$L$ are disjoint.
Thus $\lambda^{\conflict}$ is the union of the sequent $\Gamma$
(formula parse trees) and the $\conflict$-graph of $L$, together with
an edge $l\edge x$ whenever $\langle l,x\rangle\in\lambda$ (\ie,
whenever $x$ is a leaf in the dual pair of $l$).

A vertex in a sequent with no outgoing edge is a \defn{root}, and is
said to be \defn{final}.
Let $\diamond\in\{\with,\plus\}$ and let $r$ be the
$\diamond$-labelled root of the formula $A_0\diamond A_1$ in $\Gamma$.
A slicing $\lambda : L\to \Gamma$ \defn{touches} $A_i$ if some leaf of
$A_i$ is in the image of $\lambda$, and \defn{chooses} $A_i$ if it
touches $A_i$ but does not touch $A_{1-i}$.  (Since $\lambda$ is a
slicing, if it is non-empty it must touch at least one of $A_0$ and
$A_1$ by Proposition~\ref{prop-slices}; it is possible that $\lambda$
touches both.)  If $\lambda$ touches exactly one of the $A_i$ we say
that $r$ is \defn{unary} under $\lambda$.
A
\defn{piece}
of $\lambda$ is its restriction to a connected
component\footnote{By convention, a connected component is non-empty.}
of the $\adjacent$-graph $\adjacencygraph{L}$ of $L$.
A slicing $\lambda : L\to\Gamma$ is \defn{connected} if it is
non-empty and its $\conflict$-graph $\conflictgraph\lambda$ is
connected.

Let $\lambda : L\to \Gamma$ be a connected slicing.
A $\square$-labelled root $r$ 
is \defn{ready} in $\lambda$ if one of the following cases holds:
\begin{itemize}
\item
$\square=\tensor$ and $r$ is not in a cycle in
$\conflictgraph{\lambda}$.\footnote{In other words, upon deleting $r$
(and its two incoming edges) there are two connected components.}
\item 
$\square=\parr$.
\item
$\square=\plus$ and $r$ is unary under $\lambda$.
\item
$\square=\with$ and $r$ is unary under every piece of $\lambda$.
\end{itemize}
Let $A_0\square A_1$ be the formula whose root is $r$.  The result of
\defn{erasing} $r$, if $r$ is ready, is a set of slicings $\lambda\setminus r$:
\begin{itemize}
\item
$\square=\tensor$.
Let $\conflictgraph{\lambda}_0$ and $\conflictgraph{\lambda}_1$ be the
connected components of $\conflictgraph\lambda$ upon deleting $r$.
This yields two slicings $\lambda_0$ and $\lambda_1$, the former on a
sequent $\Delta_0,A_0,\Delta_0'$ and the latter on
$\Delta_1,A_1,\Delta_1'$. 
Define $\lambda\setminus r=\{\lambda_0,\lambda_1\}$.
\item 
$\square=\parr$.  Let $\conflictgraph{\lambda_0}$ be the result of
deleting $r$ from $\conflictgraph\lambda$, yielding a slicing $\lambda_0$.
Define $\lambda\setminus r=\{\lambda_0\}$.
\item
$\square=\plus$.  Since $r$ is unary under $\lambda$ and $\lambda$ is
non-empty, $\lambda$ chooses $A_j$ for some $i\in\{0,1\}$.
Let $\conflictgraph\lambda_j$ be the result of deleting $r$ and
$A_{1-j}$ from $\conflictgraph\lambda$, yielding a slicing
$\lambda_j$.
Define $\lambda\setminus r=\{\lambda_j\}$.
\item
$\square=\with$.
Let $\Gamma=\Delta,A_0\with A_1,\Sigma$.  Let $\lambda_i$ be the
slicing on $\Delta,A_i,\Sigma$ comprising the union of all pieces
of $\lambda$ which choose $A_i$.  
Define $\lambda\setminus r=\{\lambda_0,\lambda_1\}$.
(By Proposition~\ref{prop-slices}, every piece of $\lambda$ chooses one of the $A_i$.  Thus $\lambda=\lambda_0\cup\lambda_1$.)
\end{itemize}
Note that even though $\lambda$ is connected, a slicing in
$\lambda\setminus r$ may be disconnected (\eg\ empty).

A \defn{cluster} is either a set of slicings or the \defn{error} symbol $\error$.
Define \defn{erasure} $\leadsto$ on clusters as follows.
\begin{itemize}
\item $Y\leadsto\error$ if $Y$ contains a slicing which is disconnected. (Note: any empty slicing is disconnected.)
\item $X\cup\{\lambda\}\,\leadsto\,X\cup(\lambda\setminus r)$ if $r$ is a ready root of $\lambda$,
and every slicing in $X$ is connected.  
Here we assume $\lambda\not\in X$.
\item $X\cup\{\lambda\}\,\leadsto\,X$ if $\lambda$ is a single link on $P,\dual P$ for some literal $P$ 
(\ie, if $\lambda$ corresponds to an axiom), and every slicing of $X$ is connected.  Here we assume $\lambda\not\in X$.
\end{itemize}
Write $\leadsto^*$ for the transitive closure of $\leadsto$.
\begin{proposition}
Erasure $\leadsto$ is locally confluent (weak Church-Rosser): if $X\leadsto
Y_0$ and $X\leadsto Y_1$ there exists a cluster $Z$ such that
$Y_0\leadsto^* Z$ and $Y_1\leadsto^* Z$.
\end{proposition}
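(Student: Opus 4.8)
The plan is to run the standard critical-pair analysis for weak Church--Rosser, but the decisive simplification is a dichotomy between the error rule and the two genuine rules. First I would observe that at any fixed cluster the step $Y\leadsto\error$ and the two reduction steps are mutually exclusive: both genuine rules carry the side condition that every slicing in the cluster is connected (the reduced slicing included, since readiness presupposes connectedness), whereas $Y\leadsto\error$ fires exactly when some slicing is disconnected. Hence if either of $X\leadsto Y_0$, $X\leadsto Y_1$ is an error step, both are, $Y_0=Y_1=\error$, and we may take $Z=\error$. This lets me assume henceforth that $X$ is all-connected and both steps are genuine; note also that $\error$ is a normal form, so the only way two sides can re-converge through a disconnection is for both to reach $\error$.

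Next I would split on whether the two redexes lie in the same slicing. If they lie in distinct slicings $\lambda,\mu$, write $X=W\cup\{\lambda,\mu\}$; the two single-slicing operations act on disjoint data and commute, so each side reduces in one further step to $W\cup(\lambda\setminus r)\cup(\mu\setminus s)$, provided both $\lambda\setminus r$ and $\mu\setminus s$ are all-connected. If instead, say, $\lambda\setminus r$ contains a disconnected (\eg\ empty) slicing then $Y_0\leadsto\error$, and I would check that $Y_1$ also reaches $\error$: either $\mu\setminus s$ is itself disconnected, or one erases $r$ from the still-present $\lambda$ in $Y_1$ to re-expose the same disconnected slicing. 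Thus $Z$ is either the common one-step join or $\error$.

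The real work is the same-slicing case: $X=W\cup\{\lambda\}$ with $\lambda$ connected and two distinct ready roots $r,s$, necessarily roots of two distinct formulas (the axiom rule creates no ready roots, so it never competes with a root erasure on one slicing). Here I would prove a single-slicing commutation lemma: either both $\lambda\setminus r$ and $\lambda\setminus s$ contain a disconnected slicing, so both sides reach $\error$, or $s$ remains ready in every slicing of $\lambda\setminus r$ whose sequent still contains its formula, symmetrically for $r$, and the two erasure orders produce the same set $\lambda\setminus\{r,s\}$. The easy directions go by monotonicity: a $\tensor$ stays out of cycles of $\conflictgraph{\lambda}$ when edges are deleted, and a $\plus$ or $\with$ stays unary when the leaves of an untouched branch are removed.

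The hard cases, where I expect the main obstacle, are the splitting redexes. When $r$ is a $\tensor$ or $\with$, erasing it breaks $\lambda$ into two slicings, and I must control how the adjacency-pieces and the connectivity of $\conflictgraph{\lambda}$ fragment. For a $\with$-root $s$, readiness is ``unary under every piece'', and a piece could in principle split into sub-pieces under a $\tensor$-erasure of $r$, one of which fails to touch $s$'s formula, breaking readiness of $s$; ruling this out, or showing that it forces a disconnection on \emph{both} sides, is the crux, and I expect to need contractibility ($P_4$-freeness) together with Proposition~\ref{prop-slices} (every piece chooses a branch at each additive root) to pin down the piece structure. Finally I would tie off the error bookkeeping: whenever one order produces a disconnected slicing, from a $\parr$ that disconnects $\conflictgraph{\lambda}$ or a $\with$ whose branch is untouched, I must show the other order produces one too, using that erasing the remaining root only removes further structure and so cannot repair a disconnection. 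Matching these error outcomes across the two reduction orders, rather than any single calculation, is the part I expect to be most delicate.
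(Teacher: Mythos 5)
Your architecture matches the paper's proof: you reduce to two genuine (non-error) steps via the mutual exclusivity of the error rule and the reduction rules, dispose of redexes lying in distinct slicings by commutation, and then run a critical-pair analysis by connective on a single connected slicing with two ready roots. Your explicit treatment of the error dichotomy is, if anything, cleaner than the paper's closing remark that ``if either $Y_i$ is $\error$ we simply take $Z=\error$.'' The $\parr/\plus$ cases, the $\with/\with$ case (where the duplicated copy of the second redex costs an extra step on each side, which is why only local confluence and not the diamond property holds), and the error bookkeeping all go through essentially as you describe.

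The gap is exactly where you place ``the crux'': a ready $\tensor$-root $r$ and a ready $\with$-root $s$ in the same slicing $\lambda$. You pose a dilemma --- either show that pieces do not fragment under the $\tensor$-erasure, or show that both orders disconnect --- and guess at the tools needed, but you do not resolve it. The paper closes this case with one observation you are missing: \emph{if a $\tensor$-root is ready in a non-empty slicing, that slicing has a single piece.} Indeed, any two links in distinct pieces are non-adjacent, hence in conflict, hence joined by an edge of $\conflictgraph{\lambda}$; since by maximality (Proposition~\ref{prop-slices}) both arguments of the tensor are touched by links, such an edge puts $r$ on a cycle, contradicting $\tensor$-readiness. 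Consequently a simultaneously ready $\with$-root $s$ is unary under the whole of $\lambda$, so one of the two slicings in $\lambda\setminus s$ is empty (hence disconnected), and $s$ remains unary after erasing $r$: both reduction orders converge to $\error$. With this lemma the piece-fragmentation you worry about never needs to be analysed, because the $\tensor/\with$ critical pair never survives into the non-error branch; the decisive facts are the adjacency/conflict dichotomy for distinct links and Proposition~\ref{prop-slices}, not a direct appeal to $P_4$-freeness.
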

\begin{proof}
Suppose $X\leadsto Y_i$ by erasing $r_i$ from $\lambda_i\in X$.
Assume $\lambda_0=\lambda_1$, or else the result is immediate.
Let $\lambda=\lambda_0=\lambda_1$.
Assume $r_0\neq r_1$, otherwise the result holds with $Z=Y_0=Y_1$.
Let $\square_i$ be the connective of $r_i$.  We split cases according to $\square_0$.
\begin{itemize}
\item $\square_0=\tensor$.  
Let $\lambda\setminus r_0=\{\lambda_a,\lambda_b\}$, with both
$\lambda_a$ and $\lambda_b$ connected. Without loss of generality, assume
$r_1$ is in the sequent of $\lambda_a$.
We split cases according to $\square_1$.
\begin{itemize}
\item $\square_1=\plus$ or $\parr$. Then $\lambda_a\setminus
r_1=\{\lambda_a'\}$.  If $\lambda_a'$ is disconnected (case
$\square=\parr$ only), take $Z=\error$; otherwise define $Z$ by
replacing $\lambda$ in $X$ with $\{\lambda_a',\lambda_b\}$.
\item $\square_1=\tensor$. Then $\lambda_a\setminus
r_1=\{\lambda_a',\lambda_a''\}$, with $\lambda_a'$ and $\lambda_b'$
connected.  Define $Z$ by replacing $\lambda$ in $X$ with
$\{\lambda_a',\lambda_a'',\lambda_b\}$.
\item $\square_1=\with$.  
Since $r_0$ is ready in $\lambda$, and $\lambda$ is non-empty, $\lambda$ must have a single piece.
Thus $r_1$ is unary, so one of the two slicings obtained by removing $r_1$ is empty.
Since $r_1$ remains unary after erasing $r_0$, we can take $Z=\error$.
\end{itemize}
\item $\square_0=\with$.  
By $r_0/r_1$ symmetry, we need not consider $\square_1=\tensor$.
Let $\lambda\setminus r_0=\{\lambda_a,\lambda_b\}$.
Assume $\lambda_a$ and $\lambda_b$ are connected, or else the result is trivial with $Z=\error$.
We consider subcases for $\square_1$.
\begin{itemize}
\item $\square_1=\with$.  
Since there is no constraint on $\with$-readiness, we can erase the $\with$'s in either order.
However, due to duplication, there are two copies of the second $\with$ to erase.
Let $\Gamma_a$ and $\Gamma_b$ be the sequents of $\lambda_a$ and $\lambda_b$.  
The sequents have copies $r_{1a}$ and $r_{1b}$ of $r_1$, respectively.
We have $\lambda_a\setminus r_{1a}=\{\lambda_{ax},\lambda_{ay}\}$ and $\lambda_b\setminus r_{1b}=\{\lambda_{bx},\lambda_{by}\}$.
Let $\lambda\setminus r_1=\{\lambda_x,\lambda_y\}$.
Analogously, $\lambda_x\setminus r_{0x}=\{\lambda_{xa},\lambda_{xb}\}$
and $\lambda_y\setminus r_{0y}=\{\lambda_{ya},\lambda_{yb}\}$.
Since $\with$-removal merely partitions the pieces of $\lambda$,
we have $\lambda_{ax}=\lambda_{xa}$, and similarly for the other
three.
If any of the four slicings is empty, we take $Z=\error$.  Otherwise,
let $X=X'\cup\{\lambda\}$, where $\lambda\not\in X'$.  Define
$Z=X'\cup\{\lambda_{ax},\lambda_{ay},\lambda_{bx},\lambda_{by}\}$.
Then 
\begin{displaymath}
\begin{array}{c}
X\leadsto_{r_0}Y_0\leadsto_{r_{1a}}X'\cup\{\lambda_{ax},\lambda_{ay},\lambda_b\}\leadsto_{r_{1b}}Z\\[1ex]
X\leadsto_{r_1}Y_1\leadsto_{r_{0x}}X'\cup\{\lambda_{xa},\lambda_{xb},\lambda_y\}\leadsto_{r_{0y}}Z
\end{array}
\end{displaymath}
where the $\leadsto$-subscripts indicate which root is being erased.
\item $\square_1=\plus$ and $\parr$.  The reasoning is analogous to the previous case, though simpler due to less duplication.
\end{itemize}
\item $\square_0=\parr$. By symmetry, we need only consider $\square_1=\parr$ or $\plus$.  This case is trivial, 
since erasing each $r_i$ merely deletes a vertex from a
(sequent)-graph.
It is possible that erasing a $\parr$ can yield a disconnected slicing; in this case we take $Z=\error$.
\item $\square_0=\plus$.  By symmetry, we need only consider $\square_1=\plus$.  This case is trivial.
\end{itemize}
If either $Y_i$ is $\error$ we simply take $Z=\error$.
\end{proof}

Define the \defn{profile} of a cluster as $\langle p,q\rangle$ where
$p$ is the total number of links (summed accross all slicings) plus
the total number of conflict edges, and $q$ is the total number of
connectives (in the underlying sequents).
\begin{theorem}
Erasure $\leadsto$ is terminating (strongly normalising).
\end{theorem}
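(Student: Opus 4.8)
The plan is to produce a well-founded measure that strictly decreases along $\leadsto$. I will order profiles $\langle p,q\rangle$ lexicographically (so $\langle p',q'\rangle<\langle p,q\rangle$ iff $p'<p$, or $p'=p$ and $q'<q$); as pairs of naturals these are well-founded. The subtlety I must confront first is that $\leadsto$ does \emph{not} lower $\langle p,q\rangle$ on every step: erasing a $\with$ duplicates the context $\Delta,\Sigma$, so $q$ can rise, and when the $\with$ is ``one-sided'' (all pieces choose the same $A_i$) no conflict edge is destroyed either, so $p$ stays fixed. I will neutralise this with a structural observation: every clause that can raise the profile produces a cluster containing a \emph{disconnected} slicing (e.g.\ the empty $\lambda_1$ in the one-sided $\with$ case), and from any cluster containing a disconnected slicing the only available reduct is $\error$. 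Indeed, both non-error clauses require a ready root, which by definition presupposes a \emph{connected} slicing, and require every other slicing to be connected. Since $\error$ is a normal form, such a cluster is exactly one step from termination.

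The core will be a Main Lemma: if $C\leadsto C'$ with both $C$ and $C'$ all-connected (every slicing connected, neither equal to $\error$), then $\langle p(C'),q(C')\rangle<\langle p(C),q(C)\rangle$. I argue by cases on the redex. For an axiom removal $X\cup\{\lambda\}\leadsto X$ a single link disappears (and a single-link slicing carries no conflict edge), so $p$ drops by one. For $\tensor$, $\parr$, $\plus$ with connected result, no link or conflict edge is touched, so $p$ is unchanged while $q$ strictly falls: a $\tensor$ splits the sequent with no duplication and deletes one connective, a $\parr$ deletes one connective, and a $\plus$ deletes its root together with the whole discarded $A_{1-j}$, which carries no links because $\lambda$ chooses $A_j$. (Readiness of $\tensor$ — deletion of $r$ yielding two components — guarantees no link or conflict edge straddles the two halves, so $p$ is genuinely conserved there.) This leaves only $\with$, where the decrease must come through $p$, since $q$ may rise.

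The crux is the $\with$ case. Since $C'$ is all-connected, both $\lambda_0$ and $\lambda_1$ are connected, hence nonempty, so $\lambda$ touches both $A_0$ and $A_1$; erasure merely partitions the pieces, so the total number of links is preserved and it suffices to destroy one conflict edge. Choose a link $l$ meeting a leaf $u_0$ of $A_0$ and a link $m$ meeting a leaf $u_1$ of $A_1$. In $\cohof{\Gamma}$ the smallest subformula containing $u_0$ and $u_1$ is $A_0\with A_1$, which is additive, so $u_0\conflict u_1$; because $\lambda$ is a coherence-space map it reflects strict incoherence, whence $l\conflict m$. By readiness each piece is unary, so $l$'s piece chooses $A_0$ and $m$'s chooses $A_1$; thus $l$ and $m$ fall into opposite halves of the split and the edge $l\conflict m$ is erased. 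Hence $p$ strictly decreases, and $\langle p,q\rangle$ falls lexicographically no matter what $q$ does.

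Finally I assemble strong normalisation. In any putative infinite reduction $C_0\leadsto C_1\leadsto\cdots$ no $C_i$ is $\error$ (terminal), and if some $C_i$ contained a disconnected slicing its unique reduct would be $\error$, stopping the sequence; so every $C_i$ is all-connected and every step falls under the Main Lemma, giving infinitely many strict decreases of $\langle p,q\rangle$ and contradicting well-foundedness. I expect the reflect-incoherence step that manufactures the crossing conflict $l\conflict m$ to be the heart of the argument, since it is the only thing that makes $p$ pay for the context duplication; the bookkeeping observation that one-sided (profile-raising) erasures lead immediately to $\error$ is the secondary point that lets the otherwise-fragile profile $\langle p,q\rangle$ suffice.
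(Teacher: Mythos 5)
Your proposal is correct and uses the same well-founded measure as the paper --- the lexicographic order on profiles $\langle p,q\rangle$ --- but your execution is substantially more careful, and the extra care is not cosmetic. The paper's entire justification is the one-sentence dichotomy ``every step either decreases $p$, perhaps increasing $q$, or decreases $q$ without increasing $p$,'' and as you observe this dichotomy is actually false for a one-sided $\with$-erasure: when every piece of $\lambda$ chooses the same $A_i$, the other component of $\lambda\setminus r$ is empty, no cross-piece conflict edge is destroyed (so $p$ is unchanged), yet the context $\Delta,\Sigma$ is duplicated (so $q$ can strictly increase). Your repair --- that any profile-non-decreasing clause necessarily puts a disconnected slicing into the cluster, that a cluster containing a disconnected slicing has $\error$ as its unique reduct, and that $\error$ is a normal form, so such a step is at most one step from termination --- is exactly the missing ingredient, and your assembly (an infinite reduction would have to stay all-connected forever, hence strictly decrease the profile forever) is sound. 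Your treatment of the two-sided $\with$ case is also correct: connectedness of both halves forces both to be nonempty, and the crossing conflict edge $l\conflict m$ you manufacture (via reflection of strict incoherence through the map $\lambda$, or more simply because links in distinct pieces of the adjacency graph are automatically in conflict) is destroyed by the partition, so $p$ strictly drops. In short: same measure, but where the paper asserts, you prove, and in doing so you close a genuine gap in the paper's argument.
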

\begin{proof}
Every $\leadsto$-step
either (a) decreases $p$, while perhaps increasing $q$, or (b)
decreases $q$, without increasing $p$.
\end{proof}
\begin{proposition}
Erasure $\leadsto$ is confluent: if $X\leadsto^*Y_0$ and
$X\leadsto^*Y_1$ then there exists $Z$ such that $Y_0\leadsto^*Z$ and
$Y_1\leadsto^*Z$.
\end{proposition}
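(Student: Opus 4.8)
The plan is to invoke Newman's lemma \cite{New42}, exactly as in the boxless-net case: for a terminating rewrite, confluence is equivalent to local confluence. Both hypotheses are already in hand. Local confluence (weak Church--Rosser) is the immediately preceding proposition, whose case analysis on the connectives $\square_0,\square_1$ of the two erased roots produces, for any span $Y_0\leftsquigarrow X\rightsquigarrow Y_1$, a common reduct $Z$ (possibly $\error$). Strong normalisation is the immediately preceding theorem, established via the profile $\langle p,q\rangle$ ordered lexicographically: every $\leadsto$-step either strictly decreases $p$ (the link-plus-conflict-edge count) or strictly decreases $q$ (the connective count) without increasing $p$, so there are no infinite reduction sequences. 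Given these two facts, Newman's lemma yields confluence with no further work, and the proof is essentially a one-line citation.

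Concretely, I would first restate that $\leadsto$ is locally confluent and terminating, pointing to the two preceding results; then conclude confluence directly. It is worth flagging in the write-up \emph{why} this is not the same trivial argument as for boxless nets. There, erasure satisfied the full diamond property, so local confluence was immediate and single-step. Here, as the text already warns, the more abstract superposition breaks the diamond property: a $\with$-erasure duplicates any other $\with$-root across the two resulting slicings, so two diverging single steps need not reconverge in one further step each. The genuine content therefore lives in the local-confluence proposition, in particular the $\with/\with$ subcase, where reconvergence takes two additional steps on each branch and relies on the identifications $\lambda_{ax}=\lambda_{xa}$ (and its three siblings) coming from the fact that $\with$-removal merely repartitions the pieces of $\lambda$.

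The main obstacle, then, is not in this proposition at all but in the ingredient it consumes: once one accepts that erasure genuinely lacks the diamond property, the only safe route from local to global confluence is through termination, and Newman's lemma is precisely the device that licenses lifting multi-step local reconvergence to global confluence. The role of this proposition is simply to package local confluence and strong normalisation together; the only thing to be careful about is that the local-confluence statement really is the weak Church--Rosser form ($Y_0\leadsto^* Z$ and $Y_1\leadsto^* Z$, allowing several steps), which is exactly what the preceding proof delivers, so Newman's lemma applies verbatim.
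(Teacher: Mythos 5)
Your proposal matches the paper's proof exactly: the paper simply observes that erasure is locally confluent and terminating and then invokes Newman's lemma. Your additional commentary on why the diamond property fails for conflict-net erasure (in contrast to the boxless case) is accurate but not needed for the argument itself.
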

\begin{proof}
Cut elimination is locally confluent and terminating, hence confluent
by Newman's lemma \cite{New42}.
\end{proof}
Thus every cluster has a unique $\leadsto$-normal form.
A cluster $X$ is \defn{erasable} if its normal form is empty,
\ie, if $X\leadsto^*\emptyset$.  
A slicing $\lambda$ is erasable if $\{\lambda\}$ is erasable.
\begin{definition}
A \defn{conflict net} is an erasable slicing.
\end{definition}

\subsection{P-time correctness}

The \defn{size} of a coherence space is its number of tokens, and the
size of a sequent is its number of vertices.
\begin{theorem}
Erasability of a slicing $\lambda:L\to\Gamma$ can be checked in p-time
in the sizes of\/ $L$ and\/ $\Gamma$.
\end{theorem}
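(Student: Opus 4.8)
The plan is to turn the confluence and termination results just established into a concrete polynomial-time decision procedure, and then to bound the size of every intermediate cluster. Since $\leadsto$ is confluent and terminating, every cluster has a unique normal form, so to decide whether $\{\lambda\}$ is erasable it suffices to reduce it by \emph{any} strategy until no rule applies and then test whether the result is $\emptyset$; no backtracking over redex choices is ever needed. Concretely I would repeatedly: pass to $\error$ and halt with ``not erasable'' as soon as some slicing in the current cluster is disconnected; otherwise find a ready root or a single-link axiom slicing and apply the corresponding step; and halt with ``erasable'' when the cluster becomes $\emptyset$ and with ``not erasable'' when it is a nonempty normal form. Correctness is immediate from uniqueness of the normal form together with the fact that $\error$ is absorbing.

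The crux is to bound the size of every cluster reached before $\error$ polynomially in $|L|$ and $|\Gamma|$. The key structural fact is that erasure never \emph{creates} links: $\plus$- and $\parr$-erasures leave the links untouched, while the two splitting steps merely \emph{partition} them. Indeed, for a $\with$-root each piece of $\lambda$ — a connected component of the adjacency graph $\adjacencygraph L$, hence nonempty — chooses exactly one branch (Proposition~\ref{prop-slices}) and is sent wholly to one of $\lambda_0,\lambda_1$, and a ready $\tensor$-root partitions the links between the two components of $\conflictgraph\lambda$ after deleting $r$. Consequently the total number of links, summed over all slicings in a cluster, never increases and so is always at most $|L|$. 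Since the procedure passes to $\error$ the moment a disconnected (in particular link-free) slicing appears, every slicing occurring along an $\error$-free reduction is connected and is therefore held together by at least one link; hence the cluster contains at most $|L|$ slicings. Finally, each slicing's sequent is a subforest of $\Gamma$, because a step only deletes a root or discards one branch of an additive node, so it has at most $|\Gamma|$ vertices and at most $\binom{|L|}{2}$ conflict edges. The whole cluster therefore has size polynomial in $|L|$ and $|\Gamma|$.

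Given the size bound, the number of steps is controlled by the profile $\langle p,q\rangle$ of the termination proof. Both coordinates are now polynomially bounded, with $p\le|L|+\binom{|L|}{2}$ and $q\le|L|\cdot|\Gamma|$, and each step either (a) decreases $p$ while perhaps increasing $q$, or (b) decreases $q$ without increasing $p$. In particular $p$ never increases, so it strictly decreases at most $p_{\max}$ times; between two consecutive strict decreases of $p$ every step is of type (b) and strictly decreases $q$, so there are at most $q_{\max}+1$ of them. Hence any reduction has at most $(p_{\max}+1)(q_{\max}+1)$ steps, a polynomial. Each individual step is also polynomial: locating a ready root reduces to a handful of graph tests on $\conflictgraph\lambda$ — whether deleting a $\tensor$-root disconnects it (the no-cycle condition), computing the connected components of $\adjacencygraph L$ to read off the pieces, and checking the \emph{touches}/\emph{chooses}/unary conditions — together with detecting disconnected slicings and single-link axiom slicings, all polynomial in the cluster size.

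The main obstacle is precisely the size bound of the second paragraph. A priori the duplication of the shared context $\Delta,\Sigma$ under a $\with$-erasure could trigger the very exponential blowup that afflicts translation into slice nets; the entire argument hinges on the observation that links are partitioned rather than copied, so that connectedness forces at most $|L|$ live slicings and pins the cluster size to a polynomial. Once this is secured, the reduction strategy, the step count, and the per-step cost are all routine.
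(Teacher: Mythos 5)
Your proof is correct and follows essentially the same route as the paper's: both bound every cluster by at most $|L|$ connected slicings (each necessarily carrying at least one link, since links are only partitioned, never duplicated) on subforests of $\Gamma$, bound the number of erasure steps via the profile $\langle p,q\rangle$, and observe that each step reduces to polynomial graph computations (connectedness and connected-component tests). Your step-count accounting, at most $(p_{\max}+1)(q_{\max}+1)$ steps via the global bound $q\le|L|\cdot|\Gamma|$, is somewhat more explicit than the paper's terser bound $n\le k\cdot g$, but it is the same underlying argument.
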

\begin{proof}
Let $\{\lambda\}=X_0\leadsto X_1\leadsto \ldots \leadsto X_n$ be a
normalisation sequence, let $l$ be the size of $L$, and let $g$ be the
size of $\Gamma$.  Let $m=l^2$, an upper bound on the number of
conflict edges in $L$.  Let $k=l+m$.
Then $n\le k.g$ since whenever a $\leadsto$-step decreases $p$ in the
profile $\langle p,q\rangle$, it increases $q$ to at most $g$, and $p$
remains at most $k$.

It remains to show that determining if a cluster $X$ has a
$\leadsto$-redex --- and if so, executing the $\leadsto$-step --- is p-time
in $l$ and $g$.
First we check to see if every slicing in $X$ is connected, which is
p-time in the total number $v(X)$ of vertices in $X$, and $v(X)\le gl+l$.  (In
the worst case, $X$ has $l$ slicings, each a single link on $\Gamma$.)
If every slicing $\mu\in X$ is connected, we attempt to find a
$\leadsto$-redex.  Erasing axioms is trivial, therefore at worst we
take each final vertex of $X$ in turn, and check for readiness.  Checking
for readiness involves only finding connected components of graphs
($\adjacencygraph{M}$ and $\conflictgraph{\mu}$, where $M$ is
the domain of $\mu$).
\end{proof}

\subsection{Sequentialization}

\begin{theorem}[Sequentialization]
A linking is a conflict net iff it is sequentializable.
\end{theorem}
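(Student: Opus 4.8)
The plan is to prove the two implications separately, following closely the template of the boxless-net sequentialization (Theorem~\ref{thm-boxless-seq}), with erasure of slicings playing the role that erasure of circuits played there. The right-to-left direction (every sequentializable linking is a conflict net) is the soundness half: I must check that the translation of Figure~\ref{fig-translation} always lands in the class of erasable slicings. The left-to-right direction (every conflict net is sequentializable) is the genuine sequentialization, proved by an induction that peels off a ready root and reads it as the bottom rule of a proof.

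For right-to-left I would argue by induction on the number of rules in a proof $\Pi$, strengthening the statement to: the translation $\lambda:L\to\Gamma$ is a \emph{slicing} (a maximal map from a contractible coherence space) and moreover $\{\lambda\}\leadsto^*\emptyset$. Contractibility of $L$ is automatic, since the single-token space is a cograph and cographs are closed under the coproduct $L+M$ and product $L\times M$ used for the $\with$- and $\tensor$-rules; maximality is maintained rule-by-rule using Proposition~\ref{prop-slices}. For erasability I would show that the root $r$ introduced by the last rule $\rho$ of $\Pi$ is \emph{ready} in $\lambda$ and that erasing it inverts the translation, so that $\lambda\setminus r$ is exactly the family of translations of the immediate subproofs. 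For $\rho=\parr,\plus$ this is immediate; for $\rho=\tensor$ the readiness condition (that $r$ lies in no conflict cycle) follows because the product $L\times M$ introduces no cross-conflicts, so the only link between the two sides of $\conflictgraph\lambda$ is through $r$; for $\rho=\with$ the coproduct $L+M$ forces each piece to choose a single branch, so $r$ is unary under every piece. The induction hypothesis then yields $\{\lambda\}\leadsto\lambda\setminus r\leadsto^*\emptyset$, with no intermediate slicing ever disconnected, so we never reach $\error$.

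For left-to-right let $\lambda:L\to\Gamma$ be a conflict net, so $\{\lambda\}\leadsto^*\emptyset$, and induct on the profile $\langle p,q\rangle$ from the termination theorem. First, $\lambda$ must be connected, for otherwise the first step produces $\error$. If $\lambda$ is a single link on $P,\dual P$ it is the translation of an axiom (base case). Otherwise $\Gamma$ contains a connective, so the first erasure step erases some ready root $r$, and I case-split on its connective $\square$. In each case the members of $\lambda\setminus r$ are again connected (no $\error$ arises) and erasable: a reduction sequence of the cluster $\lambda\setminus r$ to $\emptyset$ projects onto separate reductions of its members, since every slicing stays connected throughout, so each member is a conflict net of strictly smaller profile. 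By the induction hypothesis each has a sequentialization, and I recombine these with the MALL rule matching $\square$ --- the $\tensor$-rule for the two components of $\conflictgraph\lambda$, the $\with$-rule for the two unions of pieces, and the $\parr$- or $\plus$-rule in the unary cases --- invoking Proposition~\ref{prop-slices} to see that in the additive cases the chosen branches are the correct ones. The constructed proof translates back to $\lambda$ precisely because erasing $r$ undoes this rule.

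The main obstacle is the verification, in both directions, that \emph{erasing a ready root is inverse to applying the corresponding rule of Figure~\ref{fig-translation}}, especially in the two binary cases. For $\tensor$ I must show that the two connected components of $\conflictgraph\lambda$ obtained by deleting a cycle-free $\tensor$-root correspond to a genuine product decomposition $L=L_0\times L_1$ of the link space, so that reconstructing the $\tensor$-rule regenerates exactly the cross-adjacencies in $\adjacencygraph\lambda$; for $\with$ I must show that the partition of the pieces of $\lambda$ by the branch they choose is exactly a coproduct decomposition $L=L_0+L_1$. Both hinge on the interplay of maximality, contractibility ($P_4$-freeness), and Proposition~\ref{prop-slices} (every slice is an additive resolution), and this is where the argument is least routine.
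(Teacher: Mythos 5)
Your proposal is correct and follows essentially the same route as the paper, which proves the right-to-left direction by a routine induction over the rule interpretations of Figure~\ref{fig-translation} and the left-to-right direction by reading a proof off a normalisation sequence bottom-up, exactly as for boxless circuits in Theorem~\ref{thm-boxless-seq}. If anything you supply more detail than the paper does (notably the substitution of the profile ordering for the step-count induction, which is needed since the diamond property fails here, and the explicit identification of the inverse relationship between erasing a ready root and applying the corresponding rule in the binary cases), all of which the paper leaves implicit.
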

\begin{proof}
The right-to-left implication is a routine
induction over the interpretation of rules as operations on linkings
(Figure~\ref{fig-translation}).

Conversely, a normalisation sequence
$\{\lambda\}=X_1\leadsto\ldots\leadsto X_n=\emptyset$ produces a proof
rule-by-rule, from bottom-to-top, exactly as in the case of circuit nets
(see the proof of Theorem~\ref{thm-boxless-seq}).  Every
$\leadsto$-step yields one non-permutation rule, plus some permutations.
\end{proof}

\section{Alternative representations of conflict nets}

Translation from a proof to a conflict net is quadratic-time in the
size of the proof (due to the conflict edges).
If we are willing to code slightly more information in the
representation, we can obtain a variant for which translation is
linear time.
A \emph{sum net} 
 collapses all parallel axiom
links to a single link, and labels every axiom link with a formal sum of
monomials.
For example, here are the sum net
representations of the two conflict nets at the bottom of
Figure~\ref{fig-egs}, respectively:
\begin{center}%
\vspace*{5ex}%
\begin{math}\psset{labelsep=1pt}
\renewcommand{\tagwith}[1]{\with}
\newcommand{\gap}{\hspace{5ex}}
\newcommand{\sequent}{\Rnode{P}{\atomOne}\gap\Rnode{P'}{\dual\atomOne}\,\tensor\,\Rnode{Q'}{\dual\atomTwo}\gap\Rnode{Q1}{\atomTwo}\,\tagwith p\,\Rnode{Q2}{\atomTwo}}
\sequent%
\ncbar[angle=90,nodesep=2pt,arm=.23cm]{P}{P'}
\ncbar[angle=90,nodesep=2pt,arm=.23cm]{Q'}{Q1}
\naput[labelsep=1pt]{p}
\ncbar[angle=-90,nodesep=2pt,arm=.23cm]{Q'}{Q2}
\nbput[labelsep=2pt]{\dual p}
\hspace*{16ex}
\sequent%
\ncbar[angle=90,nodesep=2pt,arm=.23cm]{P}{P'}
\naput[labelsep=1pt]{p+\dual p}
\ncbar[angle=90,nodesep=2pt,arm=.23cm]{Q'}{Q1}
\naput[labelsep=1pt]{p}
\ncbar[angle=-90,nodesep=2pt,arm=.23cm]{Q'}{Q2}
\nbput[labelsep=2pt]{\dual p}
\end{math}
\vspace*{5ex}
\end{center}
Girard discusses a relationship between monomials and coherence in
Appendix~A.1.1 of \cite{Gir96}.

A \emph{tree net} is another alternative.  The undirected graph of the
$\conflict$ conflict relation of a proof net is always $P_4$-free
(contractible), thus can be represented by a tree (the so-called
\emph{cotree} associated with a $P_4$-free graph).  For example, here
are the tree net versions of the last two conflict nets in
Figure~\ref{fig-egs}:
\begin{center}%
\vspace*{5ex}%
\begin{math}\psset{labelsep=1pt}
\renewcommand{\tagwith}[1]{\with}
\newcommand{\gap}{\hspace{5ex}}
\newcommand{\sequent}{\Rnode{P}{\atomOne}\gap\Rnode{P'}{\dual\atomOne}\,\tensor\,\Rnode{Q'}{\dual\atomTwo}\gap\Rnode{Q1}{\atomTwo}\,\tagwith p\,\Rnode{Q2}{\atomTwo}}
\sequent%
\ncbar[angle=90,nodesep=2pt,arm=.3cm]{P}{P'}
\ncput[nodesep=1pt]{\smallbulle}
\ncbar[angle=90,nodesep=2pt,arm=.3cm]{Q'}{Q1}
\ncput*[labelsep=1pt]{\mkern-6.5mu\smallsquar\mkern-6mu}
\ncbar[angle=-90,nodesep=2pt,arm=.3cm]{Q'}{Q2}
\ncput[labelsep=2pt]{\smallblacksquar}
\hspace*{16ex}
\sequent%
\ncbar[angle=90,nodesep=2pt,arm=.3cm]{P}{P'}
\ncput[labelsep=1pt]{\smallbulle}
\ncbar[angle=90,nodesep=2pt,arm=.3cm]{Q'}{Q1}
\ncput*[labelsep=1pt]{\mkern-6.5mu\smallsquar\mkern-6mu}
\ncbar[angle=-90,nodesep=2pt,arm=.3cm]{P}{P'}
\ncput*[labelsep=2pt]{\mkern-7mu\smallcircl\mkern-7mu}
\ncbar[angle=-90,nodesep=2pt,arm=.3cm]{Q'}{Q2}
\ncput[labelsep=-5pt]{\smallblacksquar}
\end{math}

\vspace*{12ex}

\begin{math}
\hspace*{-3ex}
\psset{unit=.9ex,nodesepA=1.3pt,nodesepB=-.5pt}
\rput(0,0){\rnode{stcoh}{\girstcoh}}%
\rput(0,4){%
  \rput(-3,0){\rnode{bulle}{\smallbulle}}%
  \rput(3,0){%
    \rput(0,0){\rnode{conflict}{\conflict}}%
    \rput(0,4){%
       \rput(-3,0){\rnode{squar}{\smallsquar}}%
       \rput(3,0){\rnode{blacksquar}{\smallblacksquar}}%
    }
  }%
}
\ncline[nodesepB=-2pt]{stcoh}{bulle}
\ncline[nodesepB=1.3pt]{stcoh}{conflict}
\ncline{conflict}{squar}
\ncline{conflict}{blacksquar}
\hspace*{38ex}
\rput(0,0){\rnode{conflict}{\conflict}}%
\rput(0,4){%
  \rput(-4,0){%
    \rput(0,0){\rnode{stcoh}{\girstcoh}}%
    \rput(0,4){%
       \rput(-2,0){\rnode{bulle}{\smallbulle}}%
       \rput(2,0){\rnode{blacksquar}{\smallblacksquar}}%
    }
  }%
  \rput(4,0){%
    \rput(0,0){\rnode{stcoh'}{\girstcoh}}%
    \rput(0,4){%
       \rput(-2,0){\rnode{circl}{\smallcircl}}%
       \rput(2,0){\rnode{squar}{\smallsquar}}%
    }
  }%
}
\ncline[nodesepB=.3pt]{conflict}{stcoh}
\ncline[nodesepB=.3pt]{conflict}{stcoh'}
\ncline{stcoh}{bulle}
\ncline{stcoh}{blacksquar}
\ncline{stcoh'}{circl}
\ncline{stcoh'}{squar}
\end{math}
\vspace*{3ex}
\end{center}
This tree on axiom links is obtained readily from a proof, in linear
time: it is the underlying $\tensor$- and $\with$-rule binary tree,
modulo associativity and commutativity, with $\tensor$-rules providing
strict coherence $\girstcoh$ between axioms, and $\with$ providing
conflict (strict incoherence) $\conflict$.

\section{P-time correctness for slice nets, by erasure}\label{sec-slice-ptime}

By using erasure, we prove that the correctness of a slice net $\Lambda$ on $\Gamma$
\cite{HG03,HG05} can be checked in p-time in the number of links in $\Lambda$ and 
the number vertices in $\Gamma$.
Recall that a linking of a slice net is a slicing $\lambda : L\to\Gamma$ with $L$ a non-empty clique.

Let $\Lambda$ be a set of linkings, or \emph{linking-set}, on $\Gamma$.
A link in/of $\Lambda$ is a link in a linking of $\Lambda$ (\ie, a link in $\bigcup\Lambda$).
Define $G(\Lambda,\Gamma)$ as the graph comprising $\Gamma$ and every link in $\Lambda$.
$\Lambda$ is \defn{connected} if it is non-empty and $G(\Lambda,\Gamma)$ is connected.

Let $\Lambda$ be a connected linking on $\Gamma$, and let $r$ be a
root of $\Gamma$, the root of the formula $A_0\square A_1$.
Define $r$ as \defn{ready} if it matches one of the following cases:
\begin{itemize}
\item
$\square=\parr$.
\item
$\square=\with$.
\item
$\square=\plus$ and $r$ is unary: for some $j\in\{0,1\}$ no link in $\Lambda$ has a leaf in the formula $A_j$.
\item
$\square=\tensor$. 
Deleting $r$ disconnects $G$ into two components $G_i$, where $A_i$ is
a formula in $G_i$.  
Let the underlying sequent of $G_i$ be $\Delta_i$.
For each linking $\lambda\in\Lambda$ define $\lambda_i$ as the
restriction of $\lambda$ to $\Delta_i$ (thus $\lambda=\lambda_0\cup
\lambda_1$).  Define $\Lambda_i=\{\lambda_i:\lambda\in\Lambda\}$.
Let $n_i$ be the number of linkings in $\Lambda_i$, and $n$ the number of linkings in $\Lambda$.
Then\footnote{By construction, $n\le n_0\times n_1$ always holds, since we work with \emph{sets} of linkings.} $$n\;\;=\;\;n_0\times n_1.$$
\end{itemize}
When ready, the result $\Lambda\setminus r$ of \defn{erasing} $r$ is:
\begin{itemize}
\item
$\square=\parr$.  $\Lambda_0$ on $\Gamma_0$, where $\Gamma_0$ has $A_0,A_1$ in place of $A_0\parr A_1$.
\item
$\square=\plus$.  $\Lambda_j$ on $\Gamma_j$, where $\Gamma_j$ has $A_j$
in place of $A_0\plus A_1$, according to whether a link of $\Lambda$ has a leaf in $A_j$.
\item
$\square=\with$.  $\Lambda_0$ on $\Gamma_0$ and $\Lambda_1$ on
$\Gamma_1$, where $\Gamma_i$ has $A_i$ in place of $A_0\with A_1$, and
$\Lambda_i$ comprises every linking of $\Lambda$ which has a link with
a leaf in $A_i$.  (Thus $\Lambda=\Lambda_0\cup\Lambda_1$, disjointly.)
\item
$\square=\tensor$.  $\Lambda_0$ on $\Delta_0$ and $\Lambda_1$ on
$\Delta_1$, where $\Delta_i$ and $\Lambda_i$ are as in the definition of
$\tensor$-readiness above.
\end{itemize}
Note that even though $\Lambda$ is connected, a linking-set in
$\lambda\setminus r$ may be disconnected (\eg\ empty).

The following definitions are practically identical to those for erasure of conflict nets.
A \defn{cluster} is either a set of linking-sets or the \defn{error} symbol $\error$.
Define \defn{erasure} $\leadsto$ on clusters as follows.
\begin{itemize}
\item $Y\leadsto\error$ if $Y$ contains a linking-set which is disconnected. (Note: any empty linking-set is disconnected.)
\item $X\cup\{\Lambda\}\,\leadsto\,X\cup(\Lambda\setminus r)$ if $r$ is a ready root of $\Lambda$,
and every linking-set in $X$ is connected.  
Here we assume $\Lambda\not\in X$.
\item $X\cup\{\Lambda\}\,\leadsto\,X$ if $\Lambda$ has a single link, on $P,\dual P$ for some literal $P$ 
(\ie, if $\Lambda$ corresponds to an axiom), and every linking-set of $X$ is connected.  Here we assume $\Lambda\not\in X$.
\end{itemize}
Erasure $\leadsto$ is confluent and termining by the same reasoning as
for conflict nets.
The same reasoning with profiles shows that the path-length to normal
form is polynomial in the number of links $l$ and the number of sequent vertices $g$.
Each form of readiness for a root is clearly p-time checkable.
That erasure coincides with sequentializability is again a routine
induction, as with circuits and conflict linkings.

\section{Cut elimination}\label{monomial-cut-elim}

Cut elimination for conflict nets is work in progress.  The same is
true for monomial nets: the proposal for their cut elimination
sketched in \cite[App.\,A.1.2--3]{Gir96} is ill-defined.
A counter-example is shown below.
\begin{center}\vspace*{10ex}
\begin{math}\psset{labelsep=1pt,nodesep=2pt}
\begin{array}{ccccccccccccccccccc}
\Rnode{1}{P}
&&
\Rnode{2}{P}
&&
\Rnode{3}{\dual P}
&&
\Rnode{4}{\dual Q}
&&
\Rnode{5}{Q}
&&
\Rnode{6}{Q}
&&
\Rnode{7}{\dual Q}
&&
\Rnode{8}{\dual Q}
&&
\Rnode{9}{Q}
&&
\Rnode{10}{Q}
\ncbar[angle=90,arm=15pt,offsetB=-1pt]{2}{3}\nbput{q}
\ncbar[angle=90,arm=30pt,offsetB=1pt]{1}{3}\naput{\neg q}
\ncbar[angle=90,arm=15pt,offsetB=1pt]{5}{4}\naput{q}
\ncbar[angle=90,arm=30pt,offsetB=-1pt]{6}{4}\nbput{\neg q}
\ncbar[angle=90,arm=15pt]{8}{9}\nbput{p}
\ncbar[angle=90,arm=30pt]{7}{10}\naput{\neg p}
\\[3ex]
\Rnode{a}{\plus_0}
&&\Rnode{b}{\plus_1}
&&&\Rnode{c}{\tensor}
&&&&\Rnode{d}{\with}
&&&\Rnode{e}{\plus_0}
&&\Rnode{f}{\plus_1}
&&&\Rnode{g}{\with}
\ncline{1}{a}
\ncline{2}{b}
\ncline{3}{c}
\ncline{4}{c}
\ncline{5}{d}
\ncline{6}{d}
\ncline{7}{e}
\ncline{8}{f}
\ncline{9}{g}
\ncline{10}{g}
\\[3ex]
&\Rnode{A}{\!\!\!\!P\plus P\!\!\!\!}
&&&&\Rnode{B}{\!\!\!\!\dual P\tensor \dual Q\!\!\!\!}
&&&&\Rnode{C}{\!\!\!\!Q\with Q\!\!\!\!}
&&\;\;\;\;&&\Rnode{D}{\!\!\!\!\dual Q\with \dual Q\!\!\!\!}
&&&&\Rnode{E}{\!\!\!\!Q\with Q\!\!\!\!}
\ncline{a}{A}
\ncline{b}{A}
\ncline{c}{B}
\ncline{d}{C}
\ncline{e}{D}
\ncline{f}{D}
\ncline{g}{E}
\nccurve[angleA=-50,angleB=-130]{C}{D}
\end{array}
\end{math}\vspace*{8ex}
\end{center}
The definition of cut elimination fails to work because spreading is
limited to a single formula: this means that after spreading above the
central $Q\with Q$ with respect to $p$, we do not have a proof
structure (contrary to the claim at the end of A.1.2 in
\cite{Gir96}).
To fix cut elimination, one would at a minimum have to extend
spreading: in the example above, performing something related
to spreading above the left-most formula $P\plus P$.

\section{Relationship with combinatorial proofs}\label{sec-comb}

A \emph{combinatorial proof} \cite{Hug06} is an abstraction notion of
proof net for classical logic \cite{Hug06i}.
A combinatorial proof of a classical formula $A$ is a graph
homomorphism $h:L\to G(A)$ from a partitioned $P_4$-free
(contractible) graph $L$ to a graph $G(A)$ associated with $A$,
satisfying certain conditions.  A combinatorial proof of Peirce's law
$((\dual P\vee Q)\wedge \dual P)\vee P$ is shown below.
\begin{center}
\vspace*{14ex}
\begin{math}\psset{nodesep=-1pt}
\rnode{p1}{\bullet}
\hspace*{8ex}
\rnode{p2}{\bullet}
\hspace*{7ex}
\rnode{P}{\bullet}
\ncline[linewidth=1.3pt]{p1}{p2}
\nbput[labelsep=2ex]{\rnode{Q}{\bullet}}
\ncline[linewidth=1.3pt]{p2}{Q}
\nput*[labelsep=10ex]{90}{p1}{\rnode{p1'}{\bullet}}
\nput*[labelsep=10ex]{90}{p2}{\rnode{p2'}{\bullet}}
\nput*[offsetA=-15pt,labelsep=10ex]{90}{P}{\rnode{P1'}{\bullet}}
\nput*[offsetA=15pt,labelsep=10ex]{90}{P}{\rnode{P2'}{\bullet}}
\ncline[nodesep=3pt]{->}{p1'}{p1}
\ncline[nodesep=3pt]{->}{p2'}{p2}
\ncline[nodesep=3pt]{->}{P1'}{P}
\ncline[nodesep=3pt]{->}{P2'}{P}
\ncbar[linewidth=.3pt,labelsep=-1pt,angle=90,arm=3ex]{p1'}{P1'}
\ncbar[linewidth=.3pt,labelsep=-1pt,angle=90,arm=1.5ex]{p2'}{P2'}
\ncline[linewidth=1.3pt]{p1'}{p2'}
\nput*[labelsep=2pt]{-90}{p1}{\dual P}
\nput*[labelsep=2pt]{-90}{Q}{Q}
\nput*[labelsep=2pt]{-90}{p2}{\;\dual P}
\nput*[labelsep=2pt]{-90}{P}{P}
\end{math}
\vspace*{5ex}
\end{center}
The partitioned graph $L$ is on top, with four vertices and one
(thick, horizontal) edge, and two two-vertex classes
indicated by (thin) link-style edges.
The graph $G(A)$ is underneath, with four vertices and two edges.  Its
vertices are the literals of $A$, with an edge between literals when
the smallest subformula containing them is a conjunction.
The arrows indicate the graph homomorphism $h$.

The graph homomorphism is required to be a \emph{skew fibration}.
A coherence space map, as in a slicing, is just a relational generalisation of a graph
homomorphism; the skew fibration property corresponds to maximality.
Thus slicings are very closely related to combinatorial proofs.

\small
\bibliographystyle{alpha}
\bibliography{../../bib/main}

\begin{thebibliography}{Hug06b}

\bibitem[Abr07]{Abr07}
S.~Abramsky.
\newblock {\em Interactive and Geometric Characterizations of the Space of
  Proofs (Abstract)}, volume 4646, pages 1--2.
\newblock Springer, 2007.

\bibitem[BHS05]{BHS05}
R.~F. Blute, M.~Hamano, and P.~J. Scott.
\newblock Softness of hypercoherences and {MALL} full completeness.
\newblock {\em Ann.\ Pure \& Appl.\ Logic}, 131:1--63, 2005.

\bibitem[CF05]{CF05}
Pierre-Louis Curien and Claudia Faggian.
\newblock L-nets, strategies and proof-nets.
\newblock In {\em Proc.\ {CSL}'05}, pages 167--183, 2005.

\bibitem[CP05]{CP05}
J.~Robin~B. Cockett and Craig~A. Pastro.
\newblock A language for multiplicative-additive linear logic.
\newblock {\em Elec.\ Notes in Theor. Comp.\ Sci.}, 122:23--65, 2005.

\bibitem[CPS85]{CPS85}
D.G. Corneil, Y.~Perl, and L.K. Stewart.
\newblock A linear recognition algorithm for cographs.
\newblock {\em SIAM J.\ Computing}, 14:926--934, 1985.

\bibitem[CR79]{CR74}
S.~A. Cook and R.~A. Reckhow.
\newblock The relative efficiency of propositional proof systems.
\newblock {\em J.\ Symb.\ Logic}, 44:36--50, 1979.

\bibitem[Dan90]{Dan90}
V.~Danos.
\newblock {\em La logique lin{\'e}aire appliqu{\'e}e {\`a} l'{\'e}tude de
  divers processus de normalisation et principalement du lambda calcul}.
\newblock PhD thesis, Univ.\ de Paris, 1990.

\bibitem[Gir87]{Gir87}
J.-Y. Girard.
\newblock Linear logic.
\newblock {\em Theoretical Computer Science}, 50:1--102, 1987.

\bibitem[Gir89]{Gir89}
J.-Y. Girard.
\newblock Towards a geometry of interaction.
\newblock In {\em Categories in Computer Science and Logic}, volume~92 of {\em
  Contemporary Mathematics}, pages 69--108, 1989.
\newblock Proc.\ of June 1987 meeting in Boulder, Colorado.

\bibitem[Gir96]{Gir96}
J.-Y. Girard.
\newblock Proof-nets: the parallel syntax for proof theory.
\newblock In {\em Logic and Algebra}, volume 180 of {\em Lecture Notes In Pure
  and Appl.\ Math.} Marcel Dekker, New York, 1996.

\bibitem[Ham04]{Ham04}
Masahiro Hamano.
\newblock Softness of {MALL} proof-structures and a correctness criterion with
  mix.
\newblock {\em Archive for Math.\ Logic}, 43:753--796, 2004.

\bibitem[HG03]{HG03}
D.~J.~D. Hughes and R.~J.~van Glabbeek.
\newblock Proof nets for unit-free multiplicative additive linear logic
  ({E}xtended abstract).
\newblock In {\em Proc.\ LICS'03}, pages 1--10. IEEE, 2003.

\bibitem[HG05]{HG05}
D.~J.~D. Hughes and R.~J.~van Glabbeek.
\newblock Proof nets for unit-free multiplicative-additive linear logic.
\newblock {\em ACM Transactions on Computational Logic (TOCL)}, 6:784--842,
  October 2005.
\newblock Invited submission Nov.\ 2003, revised Jan.\ 2005, full version of
  \cite{HG03}.

\bibitem[Hu99]{Hu99}
H.~Hu.
\newblock Contractible coherence spaces and maximal maps.
\newblock {\em Elec.\ Notes in Theor. Comp.\ Sci.}, 20, 1999.

\bibitem[Hug06a]{Hug06}
D.~J.~D. Hughes.
\newblock Proofs without syntax.
\newblock {\em Annals of {M}athematics}, 143:1065--1076, 2006.

\bibitem[Hug06b]{Hug06i}
D.~J.~D. Hughes.
\newblock Towards {H}ilbert's 24$^\text{th}$ {P}roblem: {C}ombinatorial {P}roof
  {I}nvariants ({P}reliminary version).
\newblock In {\em Proc.\ {WOLLiC}'06}, volume 165 of {\em Lec.\ Notes in Comp.\
  Sci.}, 2006.

\bibitem[Laf90]{Laf90}
Y.~Lafont.
\newblock Interaction nets.
\newblock In {\em Proc.\ 17-th {ACM} Symp. on Principles of Programming
  Languages, San Francisco}, pages 95--108, January 1990.

\bibitem[LdF04]{LT04}
Olivier Laurent and Lorenzo~Tortora de~Falco.
\newblock {\em Slicing polarized additive normalization}, volume 316, pages
  247--282.
\newblock LMS, 2004.

\bibitem[Mai07]{Mai07}
Roberto Maieli.
\newblock Retractile proof nets of the purely multiplicative and additive
  fragment of linear logic.
\newblock In {\em Proc.\ {L}ogic {P}rogramming for {AI} and {R}easoning},
  volume 4790 of {\em LNAI}, pages 363--377. Springer-Verlag, 2007.

\bibitem[New42]{New42}
M.~H.~A. Newman.
\newblock On theories with a combinatorial definition of ``equivalence''.
\newblock {\em Annals of Mathematics}, 43:223--243, 1942.

\bibitem[Sei74]{Sei74}
S.~Seinsche.
\newblock On a property of the class of $n$-colorable graphs.
\newblock {\em J.\ Combinatorial Th.\ (B)}, 16:191--193, 1974.

\bibitem[Urq95]{Urq95}
Alasdair Urquhart.
\newblock The complexity of propositional proofs.
\newblock {\em Bull.\ Symb.\ Logic}, 1:425--467, 1995.

\end{thebibliography}

\end{document}